\newcommand{\Yb}{\mathbf{Y}}
\newcommand{\Cc}{\mathcal{C}}
\newcommand{\Pc}{\mathcal{P}}
\tikzset{join/.code=\tikzset{after node path={%
\ifx\tikzchainprevious\pgfutil@empty\else(\tikzchainprevious)%
edge[every join]#1(\tikzchaincurrent)\fi}}}
\tikzset{>=stealth',every on chain/.append style={join},
         every join/.style={->}}
\tikzstyle{labeled}=[execute at begin node=$\scriptstyle,
\title{Big pure mapping class groups are never perfect}
\author{George Domat \\ \; Appendix with Ryan Dickmann}
\date{\today}
\begin{document}

\maketitle
\begin{abstract}
    We show that the closure of the compactly supported mapping class group of an infinite-type surface is not perfect and that its abelianization contains a direct summand isomorphic to $\oplus_{2^{\aleph_{0}}}\Q$. We also extend this to the Torelli group and show that in the case of surfaces with infinite genus the abelianization of the Torelli group contains an indivisible copy of $\displaystyle\oplus_{2^{\aleph_{0}}}\Z$ as well. Finally we give an application to the question of automatic continuity by exhibiting discontinuous homomorphisms to $\Q$.
\end{abstract}


\section{Introduction}

Let $S$ be a connected, orientable, second-countable surface. The mapping class group, $\MCG(S)$, is the group of orientation-preserving homeomorphisms of $S$ up to homotopy, where all homeomorphisms and homotopies fix the (possibly empty) boundary of $S$ point-wise. When $S$ is infinite type, that is, when $\pi_{1}(S)$ is not finitely generated, we will often call $\MCG(S)$ a \emph{big} mapping class group. The pure mapping class group, $\PMCG(S)$, is the subgroup of $\MCG(S)$ consisting of elements that fix the ends of $S$. In the finite-type setting it is a classic result of Powell \cite{Pow1978} that $\PMCG(S)$ is perfect, that is, has a trivial abelianization, whenever $S$ has genus at least $3$. We will see that this is not the case in the infinite-type setting. 

Let $\PMCGc(S)$ denote the subgroup of $\PMCG(S)$ consisting of compactly-supported mapping classes. We prove the following when $S$ has more than one end. The one-ended case is proved with Ryan Dickmann in the attached appendix by applying the Birman Exact Sequence. 

\begin{customthm}{A} \label{mainthm}
    $\overline{\PMCGc(S)}$ is not perfect if $S$ is an infinite-type surface. 
\end{customthm}

This disproves Conjecture 5 in \cite{APV2020}. In \cite{APV2020} the authors show that once $S$ has at least two ends accumulated by genus there exist nontrivial homomorphisms from $\PMCG(S)$ to $\Z$ so that $\PMCG(S)$ cannot be perfect. The maps they build come from handleshifts and for genus 2 and greater they prove that the integral cohomology of the closure of the compactly-supported mapping classes is trivial. The authors in \cite{DP2020} prove the same for surfaces with a single genus. Note that we get nontrivial homomorphisms to $\Z$ from $\PMCG(S)$ when $S$ has genus $0$ for free by first taking a forgetful map to a sphere with finitely many punctures (see \cite{DP2020} for a discussion on this). Thus $\PMCG(S)$ also cannot be perfect when $S$ has genus $0$.

In \cite{PV2018} the authors prove that $\PMCG(S) = \overline{\PMCGc(S)}$ if and only if $S$ has at most one end accumulated by genus. Combining the previous work in \cite{APV2020} with our main theorem we see that big pure mapping class groups are \emph{never} perfect.

\begin{customthm}{B}
    $\PMCG(S)$ is not perfect if $S$ is an infinite-type surface.
\end{customthm}

We prove this theorem for surfaces with at least two ends in Section 6. This proof relies on the existence of curves that separate at least two ends of the surface. The final case of the surface with one end, the Loch Ness monster surface, is proved in the appendix with Ryan Dickmann. Here we  add a puncture to the Loch Ness monster and apply the Birman Exact Sequence. 

Now that we know that $\overline{\PMCGc(S)}$ is not perfect we can ask: What is $H_{1}(\PMCGcc{S};\Z)$? Throughout this paper when we refer to the homology of a group we refer to its homology as a discrete group so that $H_{1}(G;\Z)$ is exactly the abelianization of the group $G$. We make use of the tools involved in the proof of Theorem \ref{mainthm} to find an uncountable direct sum of $\Q$'s inside the abelianization. We can then apply tools from abelian group theory to conclude the following. 

\begin{customthm}{C} \label{divsub}
    Let $S$ be an infinite-type surface. $H_{1}(\overline{\PMCGc(S)};\Z) = \left(\oplus_{2^{\aleph_{0}}}\Q\right) \oplus B$ where all divisible subgroups of $B$ are torsion.
\end{customthm}

We can similarly find such a direct summand in the abelianization of the Torelli group, $\cI(S)$. However, in the Torelli group we can make use of the Johnson homomorphism to see that the abelianization also contains uncountably many indivisible copies of $\Z$ whenever $S$ has infinite genus.

\begin{customthm}{D} \label{torelli}
    Let $S$ be an infinite-type surface. Then $H_{1}(\cI(S);\Z) = \left(\oplus_{2^{\aleph_{0}}}\Q\right) \oplus B$ where all divisible subgroups of $B$ are torsion. If $S$ also has infinite genus, then $B$ contains a copy of $\oplus_{2^{\aleph_{0}}}\Z$.
\end{customthm}

Finally we provide two applications of Theorem \ref{divsub}. The first is to the question of automatic continuity of big mapping class groups and the second pertains to endomorphisms of pure mapping class groups.

\begin{customcor}{E} \label{discontinuous}
    Let $S$ be an infinite-type surface. There exists $2^{\mathfrak{c}}$ discontinuous homomorphism from $\PMCGcc{S}$ to $\Q$. 
\end{customcor}

This gives some progress towards Questions 2.4 and 2.6 in \cite{Mann2020} that ask for which infinite-type surfaces do the mapping class groups or pure mapping class groups have automatic continuity. 

Aramayona and Souto in \cite{AS2013} prove that if $S$ is a finite-type surface without boundary and with genus at least $4$ then every endomorphism of the pure mapping class group is in fact an isomorphism. We can now give examples where this is not the case in the infinite-type setting.

\begin{customcor}{F} \label{endomorphisms}
    Let $S$ be the surface with infinite genus, no boundary components, and one end. Then there exist uncountably many distinct nontrivial endomorphisms of $\MCG(S)$ that are not isomorphisms.
\end{customcor}

To obtain these maps we use the recent result in \cite{AoPV2021} that $\Q$ is a subgroup of $\MCG(S)$ for this surface. Then we simply first map $\MCG(S)$ to $\Q$ using the maps from Theorem \ref{discontinuous} and then map $\Q$ into $\MCG(S)$. 

Our proof of the main result uses the projection complex machinery of \cite{BBF2015} and \cite{BBFS2020}. Projection complexes have proven very useful in the setting of finite-type mapping class groups (see \cite{BBF2016}, \cite{BBF2019}, and the original two papers mentioned previously). Recently the authors in \cite{HRQ2020} make use of the projection complex machinery to study the question of which big mapping class groups admit nonelementary continuous actions on hyperbolic spaces. 


\tableofcontents


\section{Outline}

Here we give an outline of the proof of Theorem \ref{mainthm}. Powell in \cite{Pow1978} shows that when $S$ is closed and without boundary $\PMCG(S)$ is perfect once $S$ has genus at least 3. It was also shown in \cite{Harer1983} that this is true whenever $S$ has any (finite) number of punctures or boundary components. However, in the finite-type setting pure mapping class groups are \emph{not} uniformly perfect. In fact, the number of commutators needed to write a power of a Dehn twist grows linearly in the power \cite{EK2001}. This idea gives some intuition for the proof of the main theorem. Consider the mapping class $f$ on the surface $S$ with two ends accumulated by genus given by an infinite product of increasing powers of Dehn twists about disjoint separating curves. That is, $f = \prod_{i \in \Z} T_{\gamma_{i}}^{|i|}$ where $\gamma_{i}$ is the bi-infinite sequence of disjoint separating curves pictured in Figure \ref{fig:laddercurves}.

\begin{figure}
	\centering
	\def\svgwidth{\columnwidth}
	    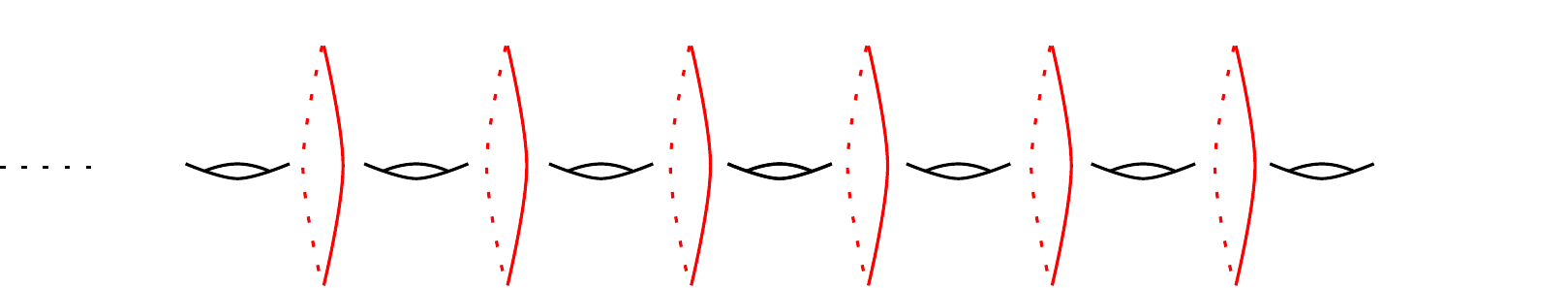
	    \caption{Curves used to find a counterexample to perfectness.}
    \label{fig:laddercurves}
\end{figure}

Now, if we approximate $f$ on larger and larger finite-type subsurfaces the number of commutators needed to write these approximations of $f$ grows; however, if $\overline{\PMCGc(S)}$ were perfect then we would be able to write $f$ as a finite product of commutators. The challenge is how do we actually build a contradiction using this intuition. 

We will build a quasimorphism on $\PMCGcc{S}$ for each curve $\gamma_{i}$ that ``measures" the twisting of $f$ about $\gamma_{i}$. However, quasimorphisms are always bounded on commutators, so we get a uniform upper bound on the value of $f$ for each of these quasimorphisms. This will yield a contradiction as $f$ twists more and more about each $\gamma_{i}$ as $i$ increases. 

This transforms the problem into building quasimorphisms with these properties. To do this we will use the projection complex machinery of \cite{BBF2015} and \cite{BBFS2020}. We will use $\gamma_{i}$ and the orbit of $\gamma_{i}$ under $\PMCGcc{S}$ to build a projection complex for each $i$. This will give an action of $\PMCGcc{S}$ on a quasi-tree with $T_{\gamma_{i}}$ acting as a WWPD element. We then use a generalization of the Brooks construction from \cite{BBF2016} to build our desired quasimorphisms.

Section 3 establishes the definitions and previous results we use. Sections 4 and 5 pertain to building the projection complexes and quasimorphisms we use. Section 6 proves that certain types of infinite products of Dehn twists about separating curves are nontrivial in $H_{1}(\PMCGcc{S};\Z)$. As a corollary we obtain Theorem \ref{mainthm}. Section 7 shows that we can similarly replace the infinite product of Dehn twists with infinite products of partial pseudo-Anosovs supported on disjoint subsurfaces and obtain the same result. This is important for proving Theorem \ref{divsub} in the low-genus case. Section 8 contains the proof of Theorem \ref{divsub}. Section 9 contains a discussion on the Torelli group and the proof of Theorem \ref{torelli}. Section 10 discusses an application to the question of automatic continuity by building discontinuous homomorphisms and contains the proof of Theorem \ref{discontinuous}. Section 11 contains a conversation and poses some questions about other possible elements in $H_{1}(\PMCGcc{S};\Z)$. Finally, the appendix with Ryan Dickmann give the proof of Theorem \ref{mainthm} in the case of the Loch Ness Monster (one-ended) surface.


\section{Background}

We will always assume that our surfaces are connected, orientable, second countable, and possibly with finitely many compact boundary components.

\subsection{Ends, classification, and exhaustions of infinite-type surfaces}

The \textbf{space of ends} of a surface $S$ is given by $\operatorname{Ends}(S) = \varprojlim_{K}(S\setminus K)$ where $K$ ranges over the compact subsets of $S$. It can be given a topology that is totally-disconnected, separable, and compact so that it is always homeomorphic to a subset of the Cantor set. We say an end is \textbf{accumulated by genus} if every open set in $S$ containing that end has infinite genus. We denote the set of ends accumulated by genus as $\operatorname{Ends}_{\infty}(S)$.

\begin{theorem} [Classification of Surfaces, \cite{Kerekjarto1923} \cite{Richards1963}]
    A surface, $S$, with finitely many compact boundary components is determined up to homeomorphism by the quadruple $(g,b,\operatorname{Ends}(S),\operatorname{Ends}_{\infty}(S))$, where $g \in \N \cup \{\infty\}$ is the genus of $S$, $b \in \N$ is the number of boundary components, and the pair $(\operatorname{Ends}(S),\operatorname{Ends}_{\infty}(S))$ is considered up to topological type.
\end{theorem}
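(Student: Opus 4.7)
The plan is to follow the approach of Richards and build a homeomorphism $S \to S'$ by induction along a compatible pair of compact exhaustions, using as input only the standard facts that every second-countable surface is triangulable and that any two compact connected orientable surfaces are determined up to homeomorphism by their genus and their number of boundary components. Since $b$ is finite and each boundary component is compact, I would first push the boundary slightly into the interior and reduce to working with surfaces whose boundary has been replaced by a distinguished finite collection of open disks; this reduction is routine and does not affect the end space.

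Using triangulability and second-countability, produce a compact exhaustion $K_1 \subset K_2 \subset \cdots$ of $S$ by connected subsurfaces with boundary such that each component of $S \setminus K_i$ has noncompact closure and is $\pi_1$-injective. The clopen partition $\mathcal{P}_i$ of $\operatorname{Ends}(S)$ whose blocks are the end spaces of the components of $S \setminus K_i$ refines as $i$ grows and generates the topology on $\operatorname{Ends}(S)$. Given $S$ and $S'$ with identical invariants, fix a homeomorphism $\varphi \colon \operatorname{Ends}(S) \to \operatorname{Ends}(S')$ carrying $\operatorname{Ends}_\infty(S)$ onto $\operatorname{Ends}_\infty(S')$, and build the two exhaustions simultaneously so that $\varphi$ identifies $\mathcal{P}_i$ with $\mathcal{P}_i'$ at every level and so that, for every matched pair of complementary components $(U, U')$, their genera and their numbers of boundary circles agree.

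With this setup, construct homeomorphisms $h_i \colon K_i \to K_i'$ inductively, extending $h_{i-1}$ over the compact cobordism $K_i \setminus \operatorname{int}(K_{i-1})$ using the classification of compact surfaces. The union $h = \bigcup_i h_i$ is continuous because every point of $S$ lies in the interior of some $K_j$, and $h^{-1}$ is continuous for the same reason; the induced map on end spaces is $\varphi$ by construction, which completes the proof.

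The main obstacle is the bookkeeping in the inductive step: given the refinement of partitions from stage $i$ to stage $i+1$, the genus hiding inside each new block must be distributed so that an end is eventually surrounded by genus-accumulating regions if and only if it lies in $\operatorname{Ends}_\infty$, and the parametrization of the boundary circles $\partial K_{i-1}$ fixed at the previous stage must be respected when extending over the cobordism. Both constraints can be met because $\operatorname{Ends}_\infty(S)$ is closed, $\varphi$ preserves it, and any orientation-preserving homeomorphism between two disjoint unions of circles extends over a compact orientable cobordism between them once the topological type (genus and number of boundary circles) of that cobordism is fixed.
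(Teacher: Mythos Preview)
The paper does not prove this theorem at all: it is quoted as a background classical result with citations to Ker\'ekj\'art\'o and Richards, and no argument is supplied. Your sketch is a faithful outline of Richards' original proof, so in that sense you are reproducing exactly the argument the paper points to.

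One comment on the sketch itself: the sentence ``build the two exhaustions simultaneously so that $\varphi$ identifies $\mathcal{P}_i$ with $\mathcal{P}_i'$ at every level and so that, for every matched pair of complementary components $(U,U')$, their genera and their numbers of boundary circles agree'' is where essentially all of the work in Richards' paper lives. It is not automatic that compatible exhaustions with matching combinatorics exist; one has to argue carefully that, at each stage, one can enlarge $K_i'$ so that the induced clopen partition of $\operatorname{Ends}(S')$ refines to match $\varphi(\mathcal{P}_i)$ while simultaneously absorbing exactly the right amount of genus into each piece. This requires knowing that in a surface with an end accumulated by genus one can always push genus toward or away from that end inside any neighborhood, and that in a planar complementary region one can always realize any prescribed splitting of the boundary circles among the pieces. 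Your final paragraph gestures at this, but a reader would need to see the actual construction (or a reference to the relevant lemmas in Richards) before the induction can proceed. The rest of the outline---triangulability, the compact classification on cobordisms, and taking the direct limit---is routine once that step is in hand.
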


Note that this classification subsumes the classical classification of finite-type surfaces.

\begin{definition}
    We say that an essential, simple closed curve $\gamma$ in a surface $S$ is \textbf{end separating} if it separates the space of ends of $S$. Likewise we say that a finite-type subsurface $B \subset S$ is \textbf{end separating} if $\partial B \setminus \partial S$ is a collection of essential, end-separating, simple closed curves. 
\end{definition}

We will make use of a modification of the notion of a principal exhaustion as defined in \cite{HMV2019}. First we recall that the \textbf{topological complexity} of a finite-type surface $S$ is $3g-3+b+n$ where $g$ is the genus of $S$, $b$ is the number of boundary components of $S$, and $n$ is the number of punctures of $S$. 

\begin{definition}
    Let $\{S_{i}\}$ be an increasing sequence of closed subsurfaces of $S$, an infinite-type surface. We say that $\{S_{i}\}$ is a \textbf{separating principal exhaustion} if $S = \bigcup_{i=1}^{\infty}S_{i}$ and for all $i$ it satisfies the following:
    \begin{enumerate}[(i)]
        \item 
            each $S_{i}$ is an end-separating surface,
        \item
            $S_{i}$ is contained in the interior of $S_{i+1}$, and
        \item 
            each component of $S_{i+1} \setminus S_{i}$ has topological complexity at least $6$. 
    \end{enumerate}
\end{definition}
A separating principal exhaustion always exists for any infinite-type surface with at least two ends. 

\subsection{Big mapping class groups}

For $S$ a surface, possibly with boundary, let $\Homeo_{\partial}^{+}(S)$ be the group of orientation-preserving homeomorphisms that fix the boundary pointwise. The \textbf{mapping class group}, $\MCG(S)$, is defined to be
\begin{align*}
    \MCG(S) = \Homeo_{\partial}^{+}(S)/\sim
\end{align*}
where two homeomorphisms are equivalent if they are isotopic relative to the boundary of $S$. When $S$ is of finite type, $\MCG(S)$ is discrete. In the infinite-type setting we equip $\Homeo_{\partial}^{+}(S)$ with the compact-open topology, this in turn induces the quotient topology on $\MCG(S)$. The \textbf{pure mapping class group}, $\PMCG(S)$, is the kernel of the action of $\MCG(S)$ on the space of ends of $S$ equipped with the subspace topology.

We say that $f \in \MCG(S)$ is \textbf{compactly supported} if $f$ has a representative that is the identity outside of a compact subset of $S$. The subgroup consisting of compactly supported mapping classes is denoted $\PMCGc(S) \subset \MCG(S)$. Note that any compactly-supported mapping class is in the subgroup $\PMCG(S)$. 

In \cite{APV2020} the authors decompose $\PMCG(S)$ as a semi-direct product of $\PMCGcc{S}$ and a group generated by handle shifts. 

\begin{theorem} [\cite{APV2020}, Corollary 4] \label{structure}
    $\PMCG(S) = \PMCGcc{S} \rtimes H$ where $\displaystyle H \cong \prod_{n-1} \Z$ with $n \in \N \cup \{\infty\}$ the number of ends of $S$ accumulated by genus and $H$ trivial if $n \leq 1$.
\end{theorem}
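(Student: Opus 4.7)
The plan is to prove the theorem by establishing a split short exact sequence
\[ 1 \longrightarrow \overline{\PMCGc(S)} \longrightarrow \PMCG(S) \xrightarrow{\ \Phi\ } \prod\nolimits_{n-1}\Z \longrightarrow 1, \]
and exhibiting a splitting realizing the quotient as a subgroup $H$ generated topologically by handle shifts. Normality of $\overline{\PMCGc(S)}$ is essentially automatic: conjugation sends a compactly supported representative to one with translated but still compact support, so $\PMCGc(S)$ is normal in $\MCG(S)$, and continuity of conjugation in the compact-open topology upgrades this to its closure. So the two substantive tasks are (a) to construct $\Phi$ and its splitting, and (b) to identify the kernel with $\overline{\PMCGc(S)}$.

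For (a), enumerate the ends accumulated by genus as $\{e_0,e_1,\dots\}$, and for each $i\geq 1$ pick a separating simple closed curve $\gamma_i$ isolating $e_i$ from the remaining ends of $\operatorname{Ends}_\infty(S)$. Given $f\in\PMCG(S)$, the image $f(\gamma_i)$ separates the same partition of ends as $\gamma_i$ (since $f$ fixes ends) and therefore cobounds a finite-genus compact subsurface with $\gamma_i$; define $\Phi_i(f)\in\Z$ to be the signed difference between the genera on the $e_i$-sides of $f(\gamma_i)$ and $\gamma_i$. A short check shows that $\Phi_i$ is independent of the choice of $\gamma_i$ (any two such choices cobound a finite-genus subsurface, contributing zero) and that $\Phi=(\Phi_i)_{i\geq 1}$ is a continuous homomorphism. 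For the splitting, pick for each $i\geq 1$ a handle shift $h_i$ supported on an infinite strip connecting neighborhoods of $e_0$ and $e_i$, with pairwise disjoint supports and normalized so that $\Phi_j(h_i)=\delta_{ij}$. Since the supports are disjoint, any infinite product $\prod_i h_i^{a_i}$ converges in the compact-open topology, giving an embedding $\prod_{n-1}\Z\hookrightarrow\PMCG(S)$ onto a subgroup $H$ intersecting $\ker\Phi$ trivially.

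The main obstacle is (b), the inclusion $\ker\Phi\subseteq\overline{\PMCGc(S)}$. Fix a separating principal exhaustion $\{S_k\}$ and $f\in\ker\Phi$; the aim is to produce $f_k\in\PMCGc(S)$ agreeing with $f$ on $S_k$ so that $f_k\to f$ in the compact-open topology. Choose a compact subsurface $L_k$ containing $S_k\cup f(S_k)$ in its interior; what must be shown is that $f|_{S_k}$ extends to a self-homeomorphism $h_k\colon L_k\to L_k$ that is the identity on $\partial L_k$, for then $f_k$ is defined as $h_k$ extended by the identity. The obstruction to such an extension is exactly the comparison of $L_k\setminus S_k$ and $L_k\setminus f(S_k)$ as surfaces with boundary: since $f$ fixes ends and $S_k$ is end-separating, corresponding components have matching end spaces, and the vanishing $\Phi(f)=0$ forces their genera to agree as well; the classification of finite-type surfaces then produces a boundary-compatible homeomorphism between them, yielding $h_k$. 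Condition (iii) of a separating principal exhaustion (complexity at least $6$ in each complementary collar) is what guarantees enough room to realize the comparison via an ambient isotopy inside $L_k$. Convergence $f_k\to f$ is immediate from the construction, closing the argument.
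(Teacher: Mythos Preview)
This theorem is not proved in the paper; it is quoted as a background result from \cite{APV2020} (their Corollary~4), so there is no proof here to compare your attempt against.

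Your sketch does follow the outline of the argument in \cite{APV2020} and \cite{PV2018}: a flux-type homomorphism $\Phi$ measuring net genus transport toward each end accumulated by genus, handle shifts with disjoint supports realizing a section, and a classification-of-surfaces argument to approximate elements of $\ker\Phi$ by compactly supported maps. Two places where the write-up is loose. First, the assertion that $\gamma_i$ and $f(\gamma_i)$ ``cobound a finite-genus compact subsurface'' is not true as stated: the curves may intersect, and even when they are disjoint the region between them can contain planar ends and fail to be compact, so the definition of $\Phi_i$ requires more care (one typically passes to a cohomological formulation, or works with auxiliary disjoint representatives and checks well-definedness). Second, your appeal to condition (iii) of a separating principal exhaustion (complexity $\geq 6$) is misplaced: that hypothesis is specific to the present paper's projection-complex constructions and plays no role in this structural result; the extension argument you describe needs only the classification of finite-type surfaces with boundary.
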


See \cite{PV2018} and \cite{APV2020} for the definition of a handle shift and a more thorough introduction to big mapping class groups. We note that if $S$ has only one end accumulated by genus, then $\PMCG(S) = \PMCGcc{S}$


\subsection{Projection complexes}

In this section we will review the projection complex machinery of \cite{BBF2015} and \cite{BBFS2020} that will be used to build an action of $\PMCGcc{S}$ on a quasi-tree.

A \textbf{quasi-tree} is a geodesic metric space that is quasi-isometric to a tree. Manning in \cite{Manning2005} gave the following equivalent characterization. A geodesic metric space X is a quasi-tree if and only if it satisfies the \textbf{bottleneck criterion}: There exists $\Delta\geq 0$ such that for any two points $x,y\in X$ the midpoint $z$ of a geodesic between $x$ and $y$ has the property that any path between $x$ and $y$ intersects the $\Delta$-ball about $z$. Here $\Delta$ is called the \textbf{bottleneck constant}.

The setup of the projection complex machinery begins with some given data: 
\begin{itemize}
    \item $\Yb$ a set,
    \item for each $Y \in \Yb$ an associated geodesic metric space $\Cc(Y)$, and
    \item for $X,Z \in \Yb$, with $X \neq Z$, a projection $\pi_{Z}(X) \subset \Cc(Z)$ from $X$ to $Z$.
\end{itemize}
Then we define $d_{Y}(X,Z) = \diam(\pi_{Y}(X) \cup \pi_{Y}(Z))$ for $X,Y,Z \in \Yb$. 

\begin{definition}
    The collection $\{(\Cc(Y),\pi_{Y})\}_{Y\in\Yb}$ satisfies the \textbf{projection axioms} for a \textbf{projection constant} $\theta \geq 0$ if
    \begin{itemize}
        \item [(P0)] $\diam(\pi_{Y}(X)) \leq \theta$ when $X \neq Y$,
        \item [(P1)] if $X,Y,Z$ are distinct and $d_{Y}(X,Z) > \theta$ then $d_{X}(Y,Z) \leq \theta$,
        \item [(P2)] if $X \neq Z$, the set $\{Y \in \Yb \vert d_{Y}(X,Z) > \theta\}$ is finite. 
    \end{itemize}
\end{definition}

Given such a collection and a constant $K>0$ one can define the \textbf{projection complex} $\Pc_{K}(\Yb)$ to be the graph with vertex set $\Yb$ and edges joining $X,Z \in \Yb$ whenever $d_{Y}(X,Z)<K$ for all $Y \in \Yb \setminus\{X,Z\}$. We then get the \textbf{blown-up projection complex} $\Cc_{K}(\Yb)$ by replacing each vertex $Y \in \Yb$ with $\Cc(Y)$ and joining points in $\pi_{X}(Z)$ with points in $\pi_{Z}(X)$ by an edge of length $L=L(K)$ whenever $X$ and $Z$ have an edge between them in $\Pc_{K}(\Yb)$. Technically $\Cc_{K}(\Yb)$ depends on a choice of $L$ and $K$ but we will fix $L$ as a function of $K$. If $K$ is sufficiently large then each $\Cc(Y)$ will be isometrically embedded in $\Cc_{K}(\Yb)$ (\cite{BBF2015}, Lemma 4.2). 

We say that a group $G$ acting on $\Yb$ preserves the projection structure if for every $Y \in \Yb$ and $g \in G$ there are isometries $F_{g}^{Y}:\Cc(Y) \rightarrow \Cc(g(Y))$ so that 
\begin{enumerate}[(i)]
    \item $F_{g'}^{g(Y)}F_{g}^{Y}=F_{g'g}^{Y}$ for all $g,g' \in G, Y \in \Yb$ and 
    \item $g(\pi_{Y}(X)) = \pi_{g(Y)}(g(X))$ for all $g \in G$ and $X,Y \in \Yb$. 
\end{enumerate}
If $G$ acts in this way it preserves the projection distances and acts naturally on $\Pc_{K}(\Yb)$ and $\Cc_{K}(\Yb)$ by isometries. 

Provided that $K$ is large enough this construction yields a quasi-tree.

\begin{theorem}[\cite{BBF2015}, Theorem 3.16, Theorem 4.14, and Theorem 4.17] \label{QT}
    If $\{(\Cc(Y),\pi_{Y})\}_{Y\in\Yb}$ satisfies the projection axioms with projection constant $\theta$ and $K>3\theta$ then
    \begin{enumerate}[(i)]
    \item 
        $\Pc_{K}(\Yb)$ is a quasi-tree.
    \item
        If all $\Cc(Y)$ are quasi-trees with uniform bottleneck constants for all $Y \in \Yb$, $\Cc_{K}(\Yb)$ is a quasi-tree. Furthermore, the bottleneck constant of $\Cc_{K}(\Yb)$ depends only on the bottleneck constants of the $\Cc(Y)$ and the projection constant. 
    \item
        If all $\Cc(Y)$ are $\delta$-hyperbolic with the same $\delta$ then $\Cc_{K}(\Yb)$ is hyperbolic with hyperbolicity constant depending only on $\delta$ and the projection constant. 
    \end{enumerate}
\end{theorem}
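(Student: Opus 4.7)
The plan is to prove the three parts in sequence, using Manning's bottleneck criterion throughout. The heart of the argument is to extract a tree-like combinatorial structure on $\Yb$ from the projection axioms and then show that this structure persists under the blow-up construction.

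First, for (i), I want to show that any two vertices $X,Z \in \Yb$ are connected by a distinguished ``standard path'' in $\Pc_K(\Yb)$ that plays the role of a geodesic in a tree. The starting point is that axiom (P2) guarantees the set $\Yb(X,Z) = \{Y : d_Y(X,Z) > \theta\}$ is finite, and a careful use of (P1) together with (P0) allows me to define a natural linear order on $\Yb(X,Z) \cup \{X,Z\}$: given two candidates $Y_1,Y_2$, at least one of $d_{Y_1}(X,Y_2) \leq \theta$ or $d_{Y_2}(X,Y_1) \leq \theta$ must fail, and one shows the ``closer to $X$'' relation is transitive. This is where $K > 3\theta$ gets used — the slack of $3\theta$ is what makes consecutive elements in this order actually adjacent in $\Pc_K(\Yb)$, producing an embedded standard path $X = Y_0, Y_1, \dots, Y_n = Z$. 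I expect this combinatorial bookkeeping to be the main obstacle; the technical subtlety is showing that any ``behind the scenes'' $Y \notin \Yb(X,Z)$ cannot destabilize adjacency along the path. Once the standard path is built, verifying the bottleneck criterion reduces to showing that any path from $X$ to $Z$ in $\Pc_K(\Yb)$ must pass through each $Y_i$ (or within a bounded neighborhood of it), which follows from the fact that stepping away from $Y_i$ forces large projection distance at $Y_i$ and hence, by (P1), small projection at the step endpoints.

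For (ii), I would promote the standard path in $\Pc_K(\Yb)$ to a ``standard path'' in $\Cc_K(\Yb)$ by replacing each vertex $Y_i$ with a geodesic in $\Cc(Y_i)$ joining the projection of $Y_{i-1}$ to the projection of $Y_{i+1}$, then concatenating with the length-$L$ edges. Since each $\Cc(Y_i)$ is itself a quasi-tree with uniform bottleneck constant $\Delta_0$, each such subsegment satisfies its own bottleneck property. The global bottleneck check then splits into two cases: if the midpoint of a path in $\Cc_K(\Yb)$ between two points lies in some $\Cc(Y_i)$, use the bottleneck property of $\Cc(Y_i)$ combined with the fact that any path between the two endpoints must enter and exit $\Cc(Y_i)$ through controlled projections; if it lies on a connecting edge, use the combinatorial bottleneck from part (i). The resulting bottleneck constant depends only on $\Delta_0$ and $\theta$, as required.

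For (iii), the argument is parallel in spirit but uses a hyperbolicity criterion in place of the bottleneck criterion — for instance, checking thin triangles. Given a triangle with vertices in $\Cc_K(\Yb)$, I decompose each side into standard-path pieces and subpaths inside individual $\Cc(Y)$. The combinatorial tree-like structure from (i) ensures the ``backbone'' of the triangle is essentially a tripod in $\Pc_K(\Yb)$ (which is $0$-hyperbolic up to the quasi-tree constant), and the $\delta$-hyperbolicity of each $\Cc(Y)$ handles the fibers. The uniform $\delta$ and the projection constant $\theta$ then combine to yield a uniform hyperbolicity constant. I expect the bookkeeping here to be routine once (i) and (ii) are in place — the real novelty lives in the linear ordering and the construction of standard paths.
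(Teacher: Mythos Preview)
The paper does not prove this theorem; it is quoted as a black box from \cite{BBF2015} (Theorems 3.16, 4.14, 4.17) and used without further argument. So there is no ``paper's own proof'' to compare against here.

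That said, your sketch is a faithful outline of the Bestvina--Bromberg--Fujiwara argument: the key ingredients are indeed the finite set $\Yb_K(X,Z)$ of large-projection domains, the linear order on it coming from (P1), the resulting standard paths, and then Manning's bottleneck criterion (or thin triangles for (iii)). One point to flag: in the actual construction BBF do not work directly with the raw distances $d_Y$ but with a modified distance $d_Y'$ obtained by a perturbation that upgrades (P1) to a strict inequality and makes the order genuinely total; your sketch glosses over this, and without it the ``linear order'' step and the adjacency-along-the-path step do not quite go through cleanly. If you intend to write out a self-contained proof rather than cite, you will need to insert that modification (this is Theorem 3.3 in \cite{BBF2015} or the forcing-sequence machinery in \cite{BBFS2020}).
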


We will primarily be utilizing the blown-up projection complex. For our uses we only need that $K$ is sufficiently larger than the projection constant and so we will often drop the $K$ and simply write $\Pc(\Yb)$ and $\Cc(\Yb)$ for the projection complex and blown-up projection complex, respectively.


\subsection{Curve graphs and projections}

We will be using the curve graphs and subsurface projections as defined in \cite{MM1999} and \cite{MM2000} to build our projection complex. Recall that the curve graph of an orientable surface with boundary, $S$, is the graph $\Cc(S)$ with vertices homotopy classes of simple closed curves and edges between any two classes that can be realized disjointly on $S$. We can then define projections between curve graphs of essential subsurfaces of $S$. If $Y$ and $Z$ are essential subsurfaces with $\partial Z$ intersecting $Y$, then $\partial Z \cap Y$ is a collection of curves and arcs in $Y$. For each of these arcs one can perform surgery, in potentially two different ways, with $\partial Y$ to close it up to a curve in $Y$. Then we define $\pi_{Y}(Z) \subset \Cc(Y)$ to be the union of all curves and closed up arcs coming from $\partial Z \cap Y$. This gives a definition for whenever our subsurfaces have negative Euler characteristic; however, we will also be concerned with annular subsurfaces and projections between them. 

We now define the curve graph for a simple closed curve in $S$, or equivalently an annular subsurface of $S$. Fix a hyperbolic metric on the interior of $S$. If $\gamma$ is an essential non-peripheral simple closed curve let $X_{\gamma}$ be the annular cover of $S$ corresponding to $\gamma$. Now let $\Cc(\gamma)$ be the graph with vertices complete geodesics in $X_{\gamma}$ that cross the core curve and an edge between any pair of geodesics that are disjoint. In \cite{MM2000} it is shown that $\Cc(\gamma)$ is quasi-isometric to $\Z$. In fact, there is always a $(1,2)$-quasi-isometry, regardless of the topological type of the underlying surface or curve.

For $\beta$ another essential non-peripheral simple closed curve intersecting $\gamma$ we define the projection $\pi_{\gamma}(\beta)$ to be the components in $X_{\gamma}$ that intersect the core curve of the preimage of the geodesic representative of $\beta$ in $S$. 

We say that two subsurfaces $Y$ and $Z$ \textbf{overlap} if $i(\partial Y, \partial Z) \neq 0$ where $i$ denotes the geometric intersection number and if $\gamma$ is an essential non-peripheral simple closed curve we say that the boundary of the corresponding annular subsurface is simply $\gamma$. Note that projections between subsurfaces are only defined when they overlap. 

We can now define distances between subsurface projections. For $X,Y,Z$ three overlapping subsurfaces (potentially annuli) define
\begin{align*}
    d_{Y}(X,Z) = \diam_{\Cc(Y)}(\pi_{Y}(X),\pi_{Y}(Z)).
\end{align*}

For $\beta$ any curve intersecting $\gamma$ transversely we have that $d_{\gamma}(T_{\gamma}^{n}(\beta),\beta) = 2 + |n|$ for all $n \neq 0$. The additive factor of two comes from the fact that the Dehn twist in $S$ will affect every lift of $\gamma$ to $X_{\gamma}$ so that the lifts of $T_{\gamma}^{n}(\beta)$ are twisted an extra amount, causing it to pick up two additional intersections as pictured in Figure \ref{fig:DTlift}. This was also shown in \cite{MM2000}.

\begin{figure}
	   \centering
	   \def\svgwidth{\columnwidth}
	    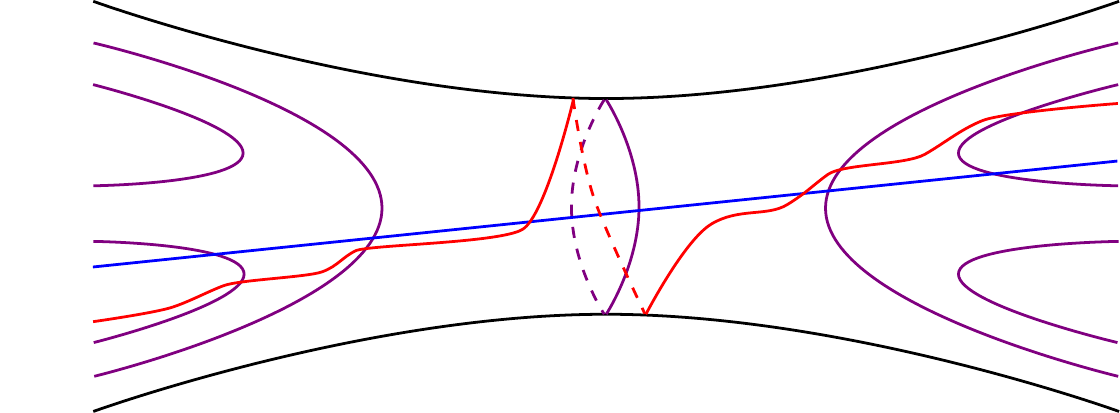
	       \caption{A schematic picture of the action of a Dehn Twist in the annular cover corresponding to $\gamma$ on the curve $\beta$ that intersects $\gamma$ once. The complete lift of $\gamma$ is given in purple, a lift of $\beta$ is blue, and the twist of this lift is in red.}
    \label{fig:DTlift}
\end{figure}

Now we have the following lemma that follows from work in \cite{MM2000} and the Behrstock inequality, \cite{Beh2006}. The explicit bound of 10 for the Behrstock inequality follows from a proof of Leininger as recorded in \cite{Mangahas2010}.

\begin{lemma}[\cite{BBF2015}, Section 5.1] \label{MCGproj}
    Let $\Yb$ be a collection of pairwise overlapping subsurfaces in a compact orientable finite-type surface $S$ such that $\chi(S)< 0$, possibly with finitely many punctures (compact after the punctures are filled in). Then $\{(\Cc(Y),\pi_{Y})\}_{Y\in\Yb}$, where $\Cc(Y)$ denotes the curve graph of $Y$, satisfies the projection axioms (P0)-(P2) with projection constant $\theta = 10$. 
\end{lemma}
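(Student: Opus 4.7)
The statement is a packaging of Masur--Minsky subsurface projection theory, so the plan is to verify the three axioms (P0), (P1), (P2) separately, treating non-annular and annular subsurfaces uniformly via the projections defined in the preceding subsection.

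For (P0), the projection $\pi_Y(X)$ is computed by taking the multicurve $\partial X \cap Y$ and, for each arc component, performing the surgery with $\partial Y$ described earlier. Since the components of $\partial X$ are pairwise disjoint, the curves produced by this procedure have small pairwise intersection number, so $\diam_{\Cc(Y)}(\pi_Y(X)) \leq 2$ in the non-annular case; disjointness of lifts in the annular cover gives the analogous bound when $Y$ is an annulus. In every case the diameter is comfortably below $10$.

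For (P1), I would invoke the Behrstock inequality for three pairwise overlapping subsurfaces $X, Y, Z$. The original statement in \cite{Beh2006} gives the inequality with some uniform constant depending on $\chi(S)$; to obtain the explicit value $\theta = 10$ I would follow Leininger's proof as recorded in \cite{Mangahas2010}. The idea is that if the projections of $\partial X$ and $\partial Z$ to $\Cc(Y)$ are at distance greater than $10$, then $\partial Y$ must intersect both $\partial X$ and $\partial Z$ enough to force $\pi_X(\partial Y)$ to be near $\pi_X(\partial Z)$ (and the symmetric statement in $Z$), which is exactly $d_X(Y, Z) \leq 10$.

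For (P2), I would appeal to Behrstock's finiteness of large projections \cite{Beh2006}, ultimately a consequence of the Bounded Geodesic Image Theorem of \cite{MM2000}: for any fixed $X \neq Z$, only finitely many subsurfaces $Y$ of the finite-type surface $S$ can have $d_Y(X, Z) > 10$. The main obstacle is (P1) with the explicit constant $10$: any soft version of the Behrstock inequality only yields an abstract universal bound, whereas pinning the constant to $10$ requires Leininger's explicit hyperbolic argument in the annular cover and careful intersection-number bookkeeping.
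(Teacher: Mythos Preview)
Your proposal is correct and matches the paper's treatment. The paper does not give its own proof of this lemma: it is stated with a citation to \cite{BBF2015}, and the preceding sentence attributes the ingredients to exactly the sources you invoke---\cite{MM2000} for the basic projection estimates and (P2), the Behrstock inequality \cite{Beh2006} for (P1), and Leininger's argument in \cite{Mangahas2010} for the explicit constant $10$.
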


This lemma also holds in the infinite-type setting.

\begin{lemma} \label{BMCGproj}
    Let $\Yb$ be a collection of pairwise overlapping finite-type subsurfaces in an orientable infinite-type surface $S$. Then $\{(\Cc(Y),\pi_{Y})\}_{Y\in\Yb}$, where $\Cc(Y)$ denotes the curve graph of $Y$, satisfies the projection axioms (P0)-(P2) with projection constant $\theta = 10$. 
\end{lemma}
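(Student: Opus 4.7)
The plan is to deduce (P0)--(P2) from the finite-type version, Lemma \ref{MCGproj}, by localizing to an appropriate finite-type subsurface of $S$. The guiding principle is that the subsurface projection $\pi_Y(X) \subset \Cc(Y)$ is a local invariant: for non-annular $Y$ it depends only on the arc system $\partial X \cap Y$ completed via surgery with $\partial Y$, and for annular $Y$ with core curve $\gamma$ it depends only on the lifts of $\partial X$ to the annular cover of $\gamma$, which is determined by a neighborhood of $\gamma$ together with $\partial X$. Consequently, the projection distances $d_Y(X,Z)$ do not change if $S$ is replaced by any finite-type essential subsurface containing the relevant curve data.

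For (P0) and (P1), which only constrain finitely many (two or three) elements of $\Yb$ at a time, I would choose a finite-type essential subsurface $\Sigma \subset S$ with $\chi(\Sigma) < 0$ containing the given subsurfaces; this is possible because each element of $\Yb$ is finite-type and hence has compact closure in $S$. Since the projection distances are preserved, Lemma \ref{MCGproj} applied inside $\Sigma$ immediately yields the required bounds with projection constant $\theta = 10$.

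For (P2), fix distinct $X, Z \in \Yb$ and pick a finite-type essential subsurface $\Sigma \subset S$ with $\chi(\Sigma) < 0$ containing $X \cup Z$. Any $Y \in \Yb$ with $d_Y(X,Z) > 10$ has both $\pi_Y(X)$ and $\pi_Y(Z)$ non-empty, so $\partial Y$ essentially meets the compact set $\partial X \cup \partial Z \subset \Sigma$. The plan is to associate to each such $Y$ a finite-type essential subsurface $\widetilde{Y} \subset \Sigma$ (after a harmless enlargement of $\Sigma$ in the annular case to contain the relevant core curve) whose boundary has the same intersection pattern with $\partial X \cup \partial Z$, so that $d_{\widetilde{Y}}(X,Z) = d_Y(X,Z) > 10$. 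Lemma \ref{MCGproj} applied inside $\Sigma$ then bounds the number of such $\widetilde{Y}$ by a finite constant, and since distinct $Y$ in the bad set are distinguished by their intersection patterns against the fixed curve system $\partial X \cup \partial Z$, this yields finiteness of $\{Y \in \Yb : d_Y(X,Z) > 10\}$. The main obstacle I anticipate is the annular case of the localization: one must verify that the twisting data of $\partial X, \partial Z$ around a core curve $\gamma$ of $Y$ is captured inside a finite-type enlargement of $\Sigma$. I would resolve this by using the fact that $\gamma$ must cross the compact set $\partial X \cup \partial Z$, so $\gamma$ can be isotoped to lie in a bounded enlargement of $\Sigma$ without altering the projection distance.
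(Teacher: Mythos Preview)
Your localization principle for (P0) and (P1) is correct and matches the paper: these axioms involve only two or three elements of $\Yb$, so one passes to any finite-type subsurface containing them and cites Lemma~\ref{MCGproj}.

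For (P2) the paper takes a shorter route than you do, and your route has a genuine gap. The paper fixes the two subsurfaces and a finite-type $A \subset S$ containing them, and then observes: if the third (varying) subsurface is not contained in $A$, then it carries an essential curve disjoint from both boundary systems, which forces the projection distance between the two fixed subsurfaces to be small. Hence every subsurface with large projection distance already lies inside $A$, and the finite-type (P2) applied in $A$ finishes. No surrogate $\widetilde{Y}$ is ever constructed.

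Your scheme of replacing each bad $Y$ by some $\widetilde{Y} \subset \Sigma$ runs into two problems. First, the ``harmless enlargement'' of $\Sigma$ you allow in the annular case depends on the particular $Y$; different bad annuli may require different enlargements, so you lose the single finite-type arena in which to apply Lemma~\ref{MCGproj}. Second, and more seriously, the injectivity step---that distinct bad $Y$ are distinguished by their intersection patterns with $\partial X \cup \partial Z$---is not justified and is false as stated. For an annular $Y$ with core $\gamma$, the quantity $d_\gamma(X,Z)$ records the relative twisting of $\partial X$ and $\partial Z$ about $\gamma$, which depends on the full isotopy class of $\gamma$ in $S$, not merely on the combinatorics of $\gamma \cap (\partial X \cup \partial Z)$; infinitely many pairwise non-isotopic curves can share the same intersection pattern with a fixed compact multicurve, so $Y \mapsto \widetilde{Y}$ need not be finite-to-one and finiteness does not transfer. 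The fix is precisely the paper's observation: a $Y$ not isotopic into $A$ already has small $d_Y(X,Z)$, so the bad set was contained in $A$ from the start and no replacement is needed.
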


\begin{proof}
    (P0) and (P1) follow exactly as in the finite-type setting. For (P2), if $X,Y \in \Yb$ we let $A$ be the smallest finite-type subsurface of $S$ that contains both $X$ and $Y$. Note that if $Z$ is a third subsurface and $Z$ is not contained within $A$ then there is some curve $\gamma$ in $Z$ disjoint from $\partial X \cap Z$ and $\partial Y \cap Z$, so it suffices to only consider subsurfaces $Z$ contained in $A$. Then (P2) holds due to the fact that it holds for $X$ and $Y$ as subsurfaces of $A$. 
\end{proof}


\subsection{WWPD elements}

The construction above gives an action of a group $G$ on a $\delta$-hyperbolic space, $\Cc_{K}(\Yb)$. $\Cc_{K}(\Yb)$ is $\delta$-hyperbolic because the hyperbolicity constant of a curve graph of a finite-type surface is independent of the topological type of the surface. This is due to \cite{Aougab2013}, \cite{Bowditch2014}, \cite{CRS2015}, and \cite{HPW2015}, independently. This action will not be proper; however, we will still have some control on how certain elements of $G$ act. We say that a \textbf{virtual quasi-axis} of an element $g \in G$ is a quasi-axis of a power of $g$. 

\begin{definition}
    Let $(G,X,g,C)$ be a quadruple with
    \begin{enumerate}[(i)]
        \item X a $\delta$-hyperbolic graph,
        \item $G$ a group acting on $X$ by isometries,
        \item $g \in G$ a hyperbolic isometry of $X$ with fixed points $x_{\pm \infty}$ at infinity, and
        \item $C < G$ a subgroup that fixes $x_{\pm \infty}$ pointwise; equivalently, for every virtual quasi-axis $\gamma$ the orbit $C\gamma$ is contained in a Hausdorff neighborhood of $\gamma$ and no element of $C$ flips the ends.
    \end{enumerate}
    Given such a quadruple, if there exists some $\xi >0$ and quasi-axis $\ell$ for $g$ such that for every $h \in G \setminus C$ we have that the projection of $h\cdot \ell$ to $\ell$ has diameter $\leq \xi$ we say that the quadruple $(G,X,g,C)$ satisfies \textbf{WWPD} with constant $\xi$. We also say that the element $g$ is \textbf{WWPD} if it is part of such a quadruple.
\end{definition}

WWPD elements will be important in our construction of quasimorphisms.


\subsection{Quasimorphisms}

\begin{definition}
    A \textbf{quasimorphism} of a group $G$ is a function $F:G \rightarrow \R$ such that 
    \begin{align*}
        D(F) \defeq \sup_{g,h \in G} \left|F(gh)-F(g)-F(h)\right| < \infty.
    \end{align*}
    We say $D(F)$ is the \textbf{defect} of $F$. We say that $F$ is \textbf{antisymmetric} if $F(g^{-1}) = -F(g)$ for all $g \in G$.  
\end{definition}

Note that any antisymmetric quasimorphism $F:G \rightarrow \R$ is bounded on commutators; that is,
\begin{align*}
    |F([g,h])| \leq 3D(F)
\end{align*}
for all $g,h \in G$. Also, if $g \in G$ can be written as a product of $C$ commutators then we have
\begin{align*}
    |F(g)| \leq 3C\cdot D(F) + (C-1)D(F) \leq 4C \cdot D(F).
\end{align*}
These inequalities follow directly from applying the definition of an antisymmetric quasimorphism. 

In \cite{BBF2016} the authors generalize the classical Brooks construction \cite{Bro1981} to the setting of groups acting on quasi-trees with WWPD elements. 

\begin{proposition} [\cite{BBF2016}, Proposition 3.1] \label{qm}
    For every $\Delta > 0$ there is $M=M(\Delta)$, a fixed multiple of $\Delta + 1$, such that the following holds. Let $(G,Q,g,C)$ satisfy WWPD with constant $\xi_{g}$ where $Q$ is a quasi-tree with bottleneck constant $\Delta$ and assume that $\tau_{g}\geq \xi_{g}+M$ where $\tau_{g}$ is the translation length of $g$. Then there is an antisymmetric quasimorphism $F:G \rightarrow \R$ such that 
    \begin{enumerate}[(i)]
        \item $D(F) \leq 12$, and
        \item $F$ is unbounded on the powers of $g$. In fact, $F(g^{n}) \geq \frac{n}{2} -1$. 
    \end{enumerate}
\end{proposition}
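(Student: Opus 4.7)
The plan is to adapt the classical Brooks counting construction to the setting of an action on a quasi-tree with a WWPD element, following the approach of \cite{BBF2016}. The rough idea is to count the signed number of times that $G$-translates of a chosen segment of a virtual quasi-axis of $g$ appear along geodesics in $Q$, using WWPD to ensure this count is finite and using the bottleneck property of $Q$ to ensure it is well-defined up to bounded error.

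More concretely, I would first fix a virtual quasi-axis $\ell$ for $g$ (after replacing $g$ by a suitable power if needed, and then absorbing the power into the bound $M$), pick a basepoint $p\in \ell$, and let $\alpha \subset \ell$ be a fundamental segment of length on the order of $\tau_{g}$. The hypothesis $\tau_{g}\geq \xi_{g}+M$ is used twice: it guarantees that $\alpha$ is strictly longer than the WWPD projection constant $\xi_{g}$, so that $\alpha$ and its $G$-translates can be unambiguously identified in $Q$, and it ensures that consecutive $g^{i}$-translates of $\alpha$ along $\ell$ overlap only in a bounded fashion. Then, given any two points $x,y \in Q$ and any translate $\alpha' = h'\alpha$, define a counting function $c(x,y,\alpha')\in \{0,1\}$ that records whether a geodesic from $x$ to $y$ fellow-travels $\alpha'$ along almost its entire length, with a consistent orientation. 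The bottleneck property with constant $\Delta$ shows that this count is independent of the choice of geodesic up to an additive error of at most $1$.

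Next, I would fix a basepoint $x_{0}\in Q$ and define
\begin{align*}
    F_{0}(h) \;=\; \sum_{\alpha'} c(x_{0}, h x_{0}, \alpha'),
\end{align*}
the sum ranging over $G$-translates of $\alpha$, and then antisymmetrize by setting $F(h) = F_{0}(h) - F_{0}(h^{-1})$. Three things need checking: (a) the sum is finite for each $h$, which follows from WWPD, since any geodesic in $Q$ can fellow-travel only finitely many $G$-translates of $\ell$ for a long distance; (b) the lower bound $F(g^{n})\geq \tfrac{n}{2}-1$, which holds because the geodesic from $x_{0}$ to $g^{n}x_{0}$ fellow-travels $\ell$ and hence traverses (up to a controlled boundary loss) the $n$ translates $g^{i}\alpha$ for $0\leq i < n$ in the positive direction while the reverse direction contributes nothing; and (c) the defect bound $D(F)\leq 12$, which is obtained by a thin-triangle argument in the quasi-tree $Q$.

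The main obstacle is the defect bound (c). For a triangle with vertices $x_{0}$, $hx_{0}$, $hkx_{0}$, one must argue that, apart from a universally bounded number of exceptional translates $\alpha'$, each translate of $\alpha$ is either traversed by all three sides (so its contribution cancels in $F(hk)-F(h)-F(k)$) or by none of them. This is the content where the constant $M=M(\Delta)$ must be chosen: $M$ is taken large enough, as a fixed multiple of $\Delta+1$, so that a translate $\alpha'$ contributing to the count truly "lies on" the geodesic in a $\Delta$-stable way, and so that the number of translates which appear only on a proper subset of the triangle's sides is controlled by the bottleneck constant. Once this bookkeeping is set up carefully, the defect bound of $12$ falls out by enumerating the at most four types of anomalous translates per triangle and using antisymmetry.
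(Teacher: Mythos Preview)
Your proposal is correct and matches the approach the paper sketches (deferring to \cite{BBF2016} for the full argument): choose a basepoint $x_{0}$ on a quasi-axis of $g$, take the fundamental segment $w=[x_{0},gx_{0}]$, and let $F(h)$ count signed $G$-translates of $w$ appearing along $[x_{0},hx_{0}]$, with WWPD guaranteeing finiteness and the bottleneck property controlling the defect. The only cosmetic difference is that the paper's description builds the sign into a single \emph{non-overlapping oriented} count, whereas you sum fellow-traveling indicators and antisymmetrize afterward; these agree up to bounded error and give the same defect and growth estimates.
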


The proposition as stated in \cite{BBF2016} has more consequences but we've only listed the two that we will take advantage of. 

\begin{figure}
	   \centering
	   \def\svgwidth{\columnwidth}
	    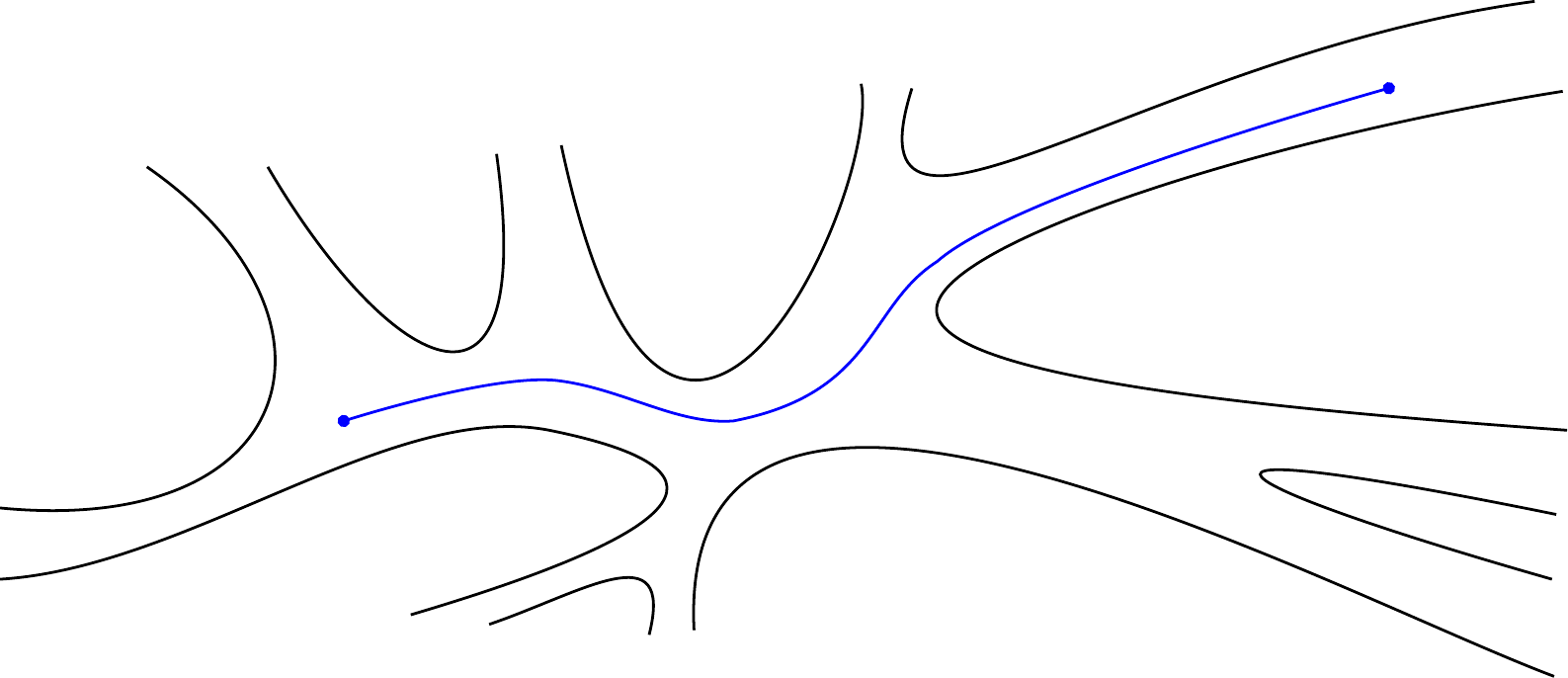
	       \caption{A schematic pictue of how the quasimorphism $F$ is defined. $F(h)$ counts the (oriented) copies of $w$ (in red) that appear along a geodesic from $x_{0}$ to $hx_{0}$. }
    \label{fig:QMexample}
\end{figure}

We give an informal description of the map $F$ and direct the reader to 
\cite{BBF2016} for more detail. We begin by picking a basepoint $x_{0} \in Q$ that is moved minimally by $g$. Let $w = [x_{0},g x_{0}]$ be an oriented geodesic. $F(h)$ is then defined by counting the \emph{non-overlapping, oriented copies} of $w$ that appear along $[x_{0},h x_{0}]$. Here a \emph{copy} of $w$ is a $G$-translate of the segment $w$. See Figure \ref{fig:QMexample} for an example of the map $F$. This is analogous to the quasimorphism defined in \cite{Bro1981} for the free group. In the quasi-tree case some extra care is needed in formally defining this count and in showing that the resulting map is a quasimorphism. 


\section{Building projection complexes}

Let $S$ be an infinite-type surface with more than one end. We will build projection complexes out of the $\PMCGcc{S}$-orbits of either an end-separating curve in $S$ or an end-separating subsurface of $S$. 

\begin{lemma} \label{curveoverlap}
    Let $g,h \in \PMCGcc{S}$. Then 
    \begin{enumerate}[(i)]
        \item
            for any end-separating simple closed curve $\gamma$ in $S$ the translates $g(\gamma)$ and $h(\gamma)$ are either homotopic or overlap. That is, either $h(\gamma) = g(\gamma)$ or $h(\gamma) \cap g(\gamma) \neq \emptyset$. 
        \item
            for any end-separating subsurface $B \subset S$ the translates $g(B)$ and $h(B)$ are either homotopic or overlap. That is, either $h(B)= g(B)$ or $i(\partial(h(B)),\partial(g(B))) \neq 0$. 
    \end{enumerate}
\end{lemma}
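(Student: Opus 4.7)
The plan is to reduce both parts to the statement for a single element: setting $f = g^{-1}h \in \PMCGcc{S}$, I will show that $f(\gamma)$ is either isotopic to $\gamma$ or intersects it essentially, and similarly for $f(B)$. It suffices to handle the case $f \in \PMCGc(S)$, since the isotopy class of $f(\gamma)$ depends only on $f$ restricted to a neighborhood of $\gamma \cup f(\gamma)$, so density of $\PMCGc(S)$ in $\PMCGcc{S}$ in the compact-open topology transports the conclusion to the general case.

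For (i), suppose $f \in \PMCGc(S)$ with $f(\gamma)$ disjoint from and non-isotopic to $\gamma$. Because $f$ is pure, $f(\gamma)$ induces the same end-partition $\{E_1,E_2\}$ of $S$ as $\gamma$, so the components of $S \setminus (\gamma \cup f(\gamma))$ consist of one piece with ends $E_1$, one piece with ends $E_2$, and a ``middle'' piece containing no ends of $S$. The middle piece has compact closure $A$ with boundary $\gamma \cup f(\gamma)$, and since $\gamma$ and $f(\gamma)$ are not isotopic, $A$ is not an annulus, hence has genus $g \geq 1$.

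Now choose a compact end-separating subsurface $K \subset S$ containing $\gamma$, $f(\gamma)$, $A$, and the support of a representative of $f$, arranged so that the end-partition $\{E_1,E_2\}$ descends to a partition $\partial K = \partial_1 K \sqcup \partial_2 K$ compatible with both $\gamma$ and $f(\gamma)$. Let $K_1, K_1'$ denote the components of $K \setminus \gamma$ and $K \setminus f(\gamma)$ containing $\partial_1 K$. Since $f|_K$ lies in $\PMCG(K)$, fixes $\partial K$ pointwise, and carries $\gamma$ to $f(\gamma)$ after isotopy, it restricts to a homeomorphism $K_1 \to K_1'$. But $K_1$ is obtained from $K_1'$ by gluing $A$ along $f(\gamma)$, so $K_1$ and $K_1'$ have the same number of boundary components while $\mathrm{genus}(K_1) = \mathrm{genus}(K_1') + g$; since $g \geq 1$ these compact surfaces are not homeomorphic, a contradiction.

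For (ii), if $i(\partial f(B), \partial B) = 0$ then every boundary component $\gamma_i$ of $B$ is disjoint from every $f(\gamma_j)$, in particular from $f(\gamma_i)$; since $B$ is end-separating, each $\gamma_i$ is end-separating, and part (i) forces $f(\gamma_i) \simeq \gamma_i$. Thus $\partial f(B) \simeq \partial B$ as multicurves, and because $B$ is the unique complementary region of $\partial B$ in $S$ that is finite-type, $f(B) \simeq B$. The main obstacle is the finite-type reduction in (i): one must arrange the compact subsurface $K$ so that the end-separation structure of $\gamma$ and $f(\gamma)$ descends to a compatible boundary partition of $K$, which is what pins $f|_K$ down as a side-preserving homeomorphism and turns the genus discrepancy into a genuine contradiction.
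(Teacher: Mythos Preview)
Your argument is correct and follows essentially the same route as the paper: reduce to a compactly supported $f$ via density, pass to a finite-type subsurface $K$ containing the support, and use that $f|_K$ is pure on $K$ to force the two sides of $\gamma$ and $f(\gamma)$ to match up, yielding a contradiction if they are disjoint but not isotopic. The paper phrases the endgame as ``$g'$ would have to carry a complementary component of $\gamma$ into itself, and since it fixes topological types this forces homotopy''; your explicit genus count $\mathrm{genus}(K_1)=\mathrm{genus}(K_1')+g$ with $g\geq 1$ is the same observation unpacked.

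One small inaccuracy in your part (ii): the claim that $B$ is the \emph{unique} finite-type complementary region of $\partial B$ can fail (a component of $S\setminus B$ could be, say, a once-punctured disk). The conclusion $f(B)\simeq B$ still holds, since every component of $S\setminus B$ contains at least one end of $S$ and $f$ fixes ends pointwise, so $f$ cannot permute $B$ with any of them. The paper's ``apply (i) to each curve of $\partial B$'' leaves this same step implicit.
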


\begin{proof}
    We start by proving (i). Without loss of generality we will show that $\gamma \cap g(\gamma) \neq \emptyset$ for all $g \in \PMCGcc{S}$ that do not fix $\gamma$ up to homotopy. Since $g \in \PMCGcc{S}$ there is some $g' \in \PMCGc(S)$ such that $g'(\gamma) = g(\gamma)$. There is a finite-type subsurface $K \subset S$ such that $\gamma,g(\gamma)$, and $\supp(g')$ are contained in $K$. Thus we can realize $g'$ as a pure mapping class of the subsurface $K$. Now since $\gamma$ separates the boundary curves and/or the punctures of $K$ and $g'$ is a pure mapping class of $K$ the only way for $g'(\gamma)$ to be disjoint from $\gamma$ would be if $g'$ maps one of the complementary components of $\gamma$ into itself. Since $g'$ fixes the topological types and boundary components of each of the complementary components of $\gamma$ this would only be possible if $\gamma$ and $g'(\gamma)$ were homotopic, a contradiction to our assumption. Thus we conclude that $\gamma \cap g'(\gamma) \neq \emptyset$, or equivalently, $\gamma \cap g(\gamma) \neq \emptyset$.
    
    For (ii) we apply (i) to each of the curves in $\partial B$. 
\end{proof}

We can now apply Lemma~\ref{BMCGproj} and Theorem~\ref{QT} to obtain an action of $\PMCGcc{S}$ on a graph that is either a quasi-tree or $\delta$-hyperbolic. 

\begin{proposition} \label{bprojcomplex}
    Let $S$ be an infinite-type surface and let $\Yb = \{g(A)\vert g \in \PMCGcc{S}\}$ where $A$ is either an end-separating curve on $S$ or an end-separating subsurface of $S$. Then $\PMCGcc{S}$ acts on a quasi-tree, the projection complex $\Pc_{A}(\Yb)$ corresponding to $\{\Cc(Y),\pi_{Y}\}_{Y \in \Yb}$, where the bottleneck constant for $\Pc_{A}(\Yb)$ is independent of the surface $S$ or $A$. Furthermore, $\PMCGcc{S}$ also acts on the blown up projection complex $\Cc_{A}(\Yb)$. When $A$ is an end-separating curve $\Cc_{A}(\Yb)$ is again a quasi-tree and when $A$ is an end-separating subsurface $\Cc_{A}(\Yb)$ is $\delta$-hyperbolic. In either case the respective bottleneck constant or $\delta$ is independent of $S$ and $A$. 
\end{proposition}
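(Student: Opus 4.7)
The plan is to verify the hypotheses of Theorem~\ref{QT} for the data $\{(\Cc(Y),\pi_Y)\}_{Y\in\Yb}$ and then to check the equivariance conditions for the $\PMCGcc{S}$-action. First, by Lemma~\ref{curveoverlap} applied to $A$ and the $\PMCGcc{S}$-orbit $\Yb$, any two distinct elements of $\Yb$ overlap (intersect essentially if $A$ is a curve, have boundaries with nonzero intersection number if $A$ is a subsurface). In particular, the subsurface projections $\pi_Y$ are defined between any pair of distinct elements of $\Yb$. Then Lemma~\ref{BMCGproj} directly gives that the collection $\{(\Cc(Y),\pi_Y)\}_{Y\in\Yb}$ satisfies the projection axioms (P0)--(P2) with projection constant $\theta = 10$, independent of $S$ and $A$.

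Next I would fix any $K > 3\theta = 30$ (and the corresponding $L(K)$ that makes each $\Cc(Y)$ isometrically embedded in $\Cc_K(\Yb)$). Theorem~\ref{QT}(i) immediately yields that $\Pc_K(\Yb)$ is a quasi-tree whose bottleneck constant depends only on $\theta$ and $K$, hence is independent of $S$ and $A$. For the blown-up complex I would split into two cases. When $A$ is an end-separating curve, every $Y \in \Yb$ is an annulus and $\Cc(Y)$ admits a $(1,2)$-quasi-isometry to $\Z$ (as recalled after the definition of the annular curve graph), so each $\Cc(Y)$ is a quasi-tree with a uniform bottleneck constant. Theorem~\ref{QT}(ii) then shows $\Cc_K(\Yb)$ is a quasi-tree with bottleneck constant depending only on this uniform constant and $\theta$. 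When $A$ is an end-separating finite-type subsurface, each $\Cc(Y)$ is the curve graph of a finite-type surface and so is $\delta$-hyperbolic for a fixed $\delta$ independent of the topological type (by the results of Aougab, Bowditch, Clay--Rafi--Schleimer, and Hensel--Przytycki--Webb cited earlier); Theorem~\ref{QT}(iii) then gives that $\Cc_K(\Yb)$ is $\delta'$-hyperbolic with $\delta'$ depending only on $\delta$ and $\theta$.

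Finally, I would verify that $\PMCGcc{S}$ preserves the projection structure so that its natural action on $\Yb$ extends to isometric actions on $\Pc_K(\Yb)$ and $\Cc_K(\Yb)$. Any mapping class $g$ induces for each $Y\in\Yb$ a homeomorphism $Y \to g(Y)$ and hence an isometry $F_g^Y \colon \Cc(Y) \to \Cc(g(Y))$ (viewing curve graphs as defined from the hyperbolic structure, mapping classes act by isometries on curve graphs, including in the annular case via lifts to the annular covers). The cocycle condition $F_{g'}^{g(Y)} F_g^Y = F_{g'g}^Y$ and the equivariance $g(\pi_Y(X)) = \pi_{g(Y)}(g(X))$ are immediate from the definitions of these projections in terms of geometric intersection and surgery along $\partial Y$, both of which are preserved by homeomorphisms.

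The only nontrivial part of the argument is the verification of the hypotheses of Lemma~\ref{BMCGproj} and of the uniform hyperbolicity/bottleneck estimates on the individual $\Cc(Y)$; everything else is essentially an application of Theorem~\ref{QT}. The ``hard step'' has already been done upstream, so this proposition is mainly a matter of assembling the cited results and checking that the constants depend only on $\theta$ and on the uniform geometry of the vertex spaces, not on $S$ or $A$.
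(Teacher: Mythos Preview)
Your proposal is correct and follows essentially the same approach as the paper: the paper simply states that one applies Lemma~\ref{BMCGproj} and Theorem~\ref{QT}, and then records in a remark that the constants depend only on the projection constant $\theta=10$ and on the uniform bottleneck/hyperbolicity constants of the vertex spaces. Your write-up spells out these steps (including the use of Lemma~\ref{curveoverlap} to guarantee pairwise overlap and the verification that the action preserves the projection structure) in more detail than the paper does, but the logical content is the same.
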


\begin{remark}
The independence of all of the constants follows from the fact that they only depend on two quantities: 
\begin{itemize}
    \item 
        The projection constant, which in all cases is $10$.
    \item
        The bottleneck constant of the curve graph in the case that $A$ is a curve, which is shown to be constant in \cite{MM2000}, and the hyperbolicity constant of the curve graph when $A$ is a finite-type surface, which is shown to be independent of topological type in \cite{Aougab2013}, \cite{Bowditch2014}, \cite{CRS2015}, and \cite{HPW2015}, independently.
\end{itemize}
\end{remark}


\section{Constructing quasimorphisms}

In this section we construct quasimorphisms on $\PMCGcc{S}$ that will ``see" elements that are nontrivial in $H_{1}(\PMCGcc{S};\Z)$. We will do this by showing that Dehn twists about end-separating curves are WWPD elements when acting on the projection complexes arising from Proposition \ref{bprojcomplex} and then applying Proposition \ref{qm}. Let $\gamma$ be an end-separating simple closed curve on $S$. 

\begin{lemma} \label{WWPDtwist}
    $(\PMCGcc{S},\Cc_{\gamma}(\Yb),T_{\gamma},\Stab(\gamma))$ satisfies WWPD with constant $\xi$ depending only on the projection constant of $\Cc_{\gamma}(\Yb)$ and with translation length $1$. Furthermore, for any power $n>0$, $(\PMCGcc{S},\Cc_{\gamma}(\Yb),T_{\gamma}^{n},\Stab(\gamma))$ also satisfies WWPD with the same constant. 
\end{lemma}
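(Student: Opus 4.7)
We verify the four conditions of the WWPD definition for the quadruple $(\PMCGcc{S},\Cc_{\gamma}(\Yb),T_{\gamma},\Stab(\gamma))$. By Proposition~\ref{bprojcomplex} (applied to the curve case), $\Cc_{\gamma}(\Yb)$ is a quasi-tree, hence $\delta$-hyperbolic with constants independent of $S$ and $\gamma$, and $\PMCGcc{S}$ acts on it by isometries. To build a quasi-axis, pick any curve $\beta$ intersecting $\gamma$ transversely and consider the bi-infinite orbit $\ell = \{T_{\gamma}^{n}(\beta)\}_{n\in\Z}\subset\Cc(\gamma)$. The background formula $d_{\gamma}(T_{\gamma}^{n}(\beta),T_{\gamma}^{m}(\beta))=2+|n-m|$ makes $\ell$ a $(1,2)$-quasi-geodesic, and since $\Cc(\gamma)$ embeds isometrically in $\Cc_{\gamma}(\Yb)$ for $K$ large enough (\cite{BBF2015}, Lemma~4.2), $\ell$ is a quasi-axis for $T_{\gamma}$ with translation length $1$.

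The two endpoints at infinity of $\ell$ correspond to the two ends of $\Cc(\gamma)\simeq\Z$, and I would argue they are fixed pointwise by $\Stab(\gamma)$: elements stabilizing $\gamma$ lift to the annular cover $X_{\gamma}$, and since they are induced by orientation-preserving homeomorphisms of $S$, they preserve the orientation of $\gamma$ and hence do not swap the two ends of its annular curve graph. This handles (i)--(iv) of the WWPD quadruple.

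The main content is the bounded-projection condition. Let $h\in\PMCGcc{S}\setminus\Stab(\gamma)$, so $h(\gamma)\neq\gamma$; by Lemma~\ref{curveoverlap}(i) the curves $\gamma$ and $h(\gamma)$ overlap, so $\pi_{\gamma}(h(\gamma))$ is well defined and has diameter at most $\theta=10$ by axiom (P0). The translated quasi-axis $h\cdot\ell$ lies inside the isometrically embedded copy of $\Cc(h(\gamma))\subset\Cc_{\gamma}(\Yb)$. The heart of the argument is the standard structural property of the blown-up projection complex (\cite{BBF2015}): the closest-point projection from $\Cc(h(\gamma))$ onto $\Cc(\gamma)$ lands in a uniform neighborhood of the bounded set $\pi_{\gamma}(h(\gamma))$, because any geodesic connecting the two distinct vertex spaces must pass through the transition region given by these subsurface projections. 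Hence the image of $h\cdot\ell$ under closest-point projection to $\Cc(\gamma)$, and therefore also to the quasi-convex subset $\ell$, has diameter at most a constant $\xi$ depending only on $\theta$ and the bottleneck constant of $\Cc_{\gamma}(\Yb)$; by Proposition~\ref{bprojcomplex} this is independent of $S$ and $\gamma$.

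For the statement about powers: the same $\ell$ is a quasi-axis of $T_{\gamma}^{n}$ for every $n>0$, the same subgroup $\Stab(\gamma)$ fixes its endpoints, and the argument in the previous paragraph makes no use of the translation length — only of the fact that $h\cdot\ell$ lies in a different vertex space — so the identical $\xi$ works. The main obstacle is the last step of the previous paragraph: promoting the geometric intuition that ``leaving $\Cc(h(\gamma))$ to reach $\Cc(\gamma)$ forces passage through a bounded transition region'' into a quantitative bound on the closest-point projection. This is exactly the kind of statement the projection complex machinery of \cite{BBF2015} and \cite{BBFS2020} is designed to supply, so I would invoke their structural lemmas rather than reprove them.
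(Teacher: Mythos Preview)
Your proof is correct and follows essentially the same approach as the paper: both verify hyperbolicity of $T_{\gamma}$ via the annular distance formula and the isometric embedding of $\Cc(\gamma)$, and both bound the projection of $h\cdot\ell$ by invoking the structural fact (recorded in the paper as Proposition~\ref{proj}, i.e.\ \cite[Corollary~4.10]{BBF2015}) that nearest-point projection of $\Cc(h(\gamma))$ onto $\Cc(\gamma)$ lands in a uniform neighborhood of $\pi_{\gamma}(h(\gamma))$. Your explicit check that $\Stab(\gamma)$ fixes the two ends of $\Cc(\gamma)$ is a detail the paper leaves implicit; the cleanest justification is that $hT_{\gamma}h^{-1}=T_{h(\gamma)}=T_{\gamma}$ for $h\in\Stab(\gamma)$, so $h$ commutes with the translation and cannot swap the ends.
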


To prove this we need to make use of the fact that nearest-point projections in the projection complex are uniformly close to the given projections.

\begin{proposition} [\cite{BBF2015}, Corollary 4.10] \label{proj}
    For every $Z \in \Yb$ the nearest-point projection $\Cc(\Yb) \rightarrow \Cc(Z)$ is coarsely Lipschitz and the image of $\Cc(Y)$ for $Y \neq Z$ is in a uniform neighborhood of the bounded set $\pi_{Z}(Y)$. The uniform bound is a function of the projection constant. 
\end{proposition}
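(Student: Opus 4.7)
The plan is to exploit the chain structure of $\Cc_K(\Yb)$: once $K$ is sufficiently larger than the projection constant $\theta$, each factor $\Cc(Y)$ sits isometrically inside $\Cc_K(\Yb)$ (the Lemma~4.2 of \cite{BBF2015} already cited above), and any geodesic between points in distinct factors decomposes as an alternating concatenation of arcs inside individual $\Cc(Y_i)$'s and length-$L$ edges connecting projection sets. The key input I would invoke is the ordering structure from \cite{BBF2015}: for distinct $X, Z \in \Yb$, the set $\Yb_K(X,Z) = \{Y \in \Yb \mid d_Y(X,Z) > K\}$ is finite by (P2) and carries a natural linear order $X = Y_0 < Y_1 < \cdots < Y_n < Y_{n+1} = Z$ with compatible consecutive projections: $\diam(\pi_{Y_i}(Y_{i-1}) \cup \pi_{Y_i}(X)) \leq 2\theta$, and likewise on the other side. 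Iterating this compatibility shows $\pi_Z(Y_n)$ lies in a $\theta$-controlled neighborhood of $\pi_Z(X)$.

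First I would verify that any geodesic $\gamma$ in $\Cc_K(\Yb)$ from $x \in \Cc(Y)$ to a point of $\Cc(Z)$ must traverse the augmented chain in order, entering each intermediate $\Cc(Y_i)$ within bounded distance of $\pi_{Y_i}(Y_{i-1})$ and exiting near $\pi_{Y_i}(Y_{i+1})$. The mechanism is (P1) together with $K > 3\theta$: any detour through a $Y' \notin \Yb_K(Y,Z)$ must cross two extra length-$L$ connecting edges with no compensating shortening of in-factor travel, and skipping an intermediate $Y_i$ is forbidden because $d_{Y_i}$ between the entry and exit sides would then exceed $K$ up to bounded error, again incurring an $L$-cost. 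The second claim of the proposition then falls out: the entry point of $\gamma$ into $\Cc(Z)$ lies within a uniform distance of $\pi_Z(Y_n)$, and by the iterated chain compatibility that point lies within a uniform distance of $\pi_Z(Y)$. Replacing geodesic entry points by true nearest-point projections costs only a bounded additive constant, so the same bound holds for the nearest-point image of $\Cc(Y)$ in $\Cc(Z)$.

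For the coarse Lipschitz assertion I would compare the chain decompositions of points $x, x' \in \Cc_K(\Yb)$ with $d(x,x') \leq 1$ toward $\Cc(Z)$: moving by distance at most $1$ either stays inside one factor or crosses at most one length-$L$ connecting edge, so the two ordered chains to $Z$ agree except possibly at one endpoint, and the geodesic entry points into $\Cc(Z)$ shift by a uniformly bounded amount depending only on $\theta$ and $L$. Composing along a longer path yields an estimate of the form $d_{\Cc(Z)}(p_Z(x), p_Z(x')) \leq C \cdot d(x,x') + C$ with $C = C(\theta, L)$. The main obstacle is the geodesic-structure argument in the second paragraph — establishing that geodesics in $\Cc_K(\Yb)$ genuinely respect the projection-complex ordering and cannot shortcut via vertices outside $\Yb_K(Y,Z)$ — which is the technical core of Sections~3--4 of \cite{BBF2015} and exploits $K > 3\theta$ crucially.
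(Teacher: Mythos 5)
This proposition is not proved in the paper at all: it is imported verbatim as Corollary~4.10 of \cite{BBF2015}, so there is no in-paper argument to compare against. Measured against the cited source, your outline reconstructs the right architecture --- the finite ordered chain $\Yb_K(X,Z)$, the compatibility of consecutive projections forcing $\pi_Z(Y_n)$ to lie within $O(\theta)$ of $\pi_Z(Y)$, the claim that paths from $\Cc(Y)$ to $\Cc(Z)$ must enter $\Cc(Z)$ near that set, and the perturbation argument for the coarse Lipschitz bound --- and the gap you flag is exactly the content of the relevant lemmas in Sections~3--4 of \cite{BBF2015}.

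One mechanism in your sketch is mis-aimed, though. You argue that a geodesic cannot "detour through a $Y'\notin\Yb_K(Y,Z)$" because such a detour "must cross two extra length-$L$ connecting edges with no compensating shortening." That is not how the control is obtained, and as stated it is false: geodesics in $\Cc_K(\Yb)$ routinely pass through many vertices outside $\Yb_K(Y,Z)$ (indeed $\Yb_K(Y,Z)$ may be empty while geodesics still traverse several factors), and a length-counting argument of this kind does not rule it out. The actual input is positive rather than prohibitive: one shows, by induction along the chain using (P1) and $K>3\theta$, that \emph{every} path from a point of $\Cc(Y)$ to a point of $\Cc(Z)$ must pass within bounded distance of $\pi_{Y_i}(Y)$ in each intermediate $\Cc(Y_i)$ with $d_{Y_i}(Y,Z)$ large, and in particular must first meet $\Cc(Z)$ in a bounded neighborhood of $\pi_Z(Y)$; passages through other factors are harmless because the projection distances they see are $\le K$ and hence contribute only bounded error. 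Since you explicitly defer this step to \cite{BBF2015}, the outline is acceptable as a citation-level reconstruction, but the heuristic you offer in its place should not be mistaken for the proof.
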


\begin{proof} [Proof of Lemma \ref{WWPDtwist}]
    We first note that $T_{\gamma}$ acts hyperbolically with translation length $1$. Indeed, within $\Cc(\gamma)$ we have that the projection distances satisfy $d_{\gamma}(T_{\gamma}^{n}(\alpha),\alpha) = 2+ |n|$ for all $n \neq 0$ where $\alpha$ is some curve in $S$ that intersects $\gamma$ transversely. $\Cc(\gamma)$ is then isometrically embedded within $\Cc(\Yb)$. 
    
    Fix a quasi-axis $\ell \subset \Cc(\gamma)$ for $T_{\gamma}$. For any $h \in \PMCGcc{K} \setminus \Stab(\gamma)$ we have that $h$ must move $\Cc(\gamma)$ to some other $\Cc(h(\gamma))$. Thus by Proposition \ref{proj} the diameter of the nearest-point projection of $h \cdot \ell$ to $\ell$ is bounded by the nearest point projection of a uniform neighborhood of $\pi_{\Cc(\gamma)}(\Cc(h\cdot\gamma))$ to $\ell$. This in turn is uniformly bounded by a function of the projection constant. 
\end{proof}

We can now apply Proposition \ref{qm} to this construction. By making an appropriate choice of basepoint in the construction of our quasimorphism we can gain control over the value of the quasimorphism on group elements that are sufficiently ``independent" of our Dehn twist. 

\begin{lemma} \label{qmh}
    Suppose $(\PMCGcc{S},\Cc_{\gamma}(\Yb),T_{\gamma}^{n},\Stab(\gamma))$ is as in Lemma \ref{WWPDtwist}. If $h \in \PMCGcc{S}$ fixes $\Cc(\gamma)$ and $\Cc(\gamma')$ for some $\gamma' \in \Yb$ with $\gamma' \neq \gamma$ then the quasimorphism $F$ obtained via Proposition \ref{qm} (when $n$ is sufficiently large) can be chosen to be bounded on $h$. Furthermore, if $n$ is greater than the projection constant, $10$, of $\Cc_{\gamma}(\Yb)$, then $F(h) = 0$. 
\end{lemma}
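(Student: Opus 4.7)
The plan is to exploit the flexibility in choosing the basepoint $x_0$ in the construction of the Brooks-type quasimorphism $F$ from Proposition~\ref{qm}. Recall that $F(h)$ counts non-overlapping oriented copies of the word $w = [x_0, T_\gamma^n x_0]$ that appear along a geodesic from $x_0$ to $h x_0$ in $\Cc_\gamma(\Yb)$. Thus, if we can choose $x_0$ so that $h x_0$ stays uniformly close to $x_0$, both claims of the lemma will follow with almost no further work.

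First I would observe that the basepoint may be taken to be any point of the isometrically embedded copy of $\Cc(\gamma) \subset \Cc_\gamma(\Yb)$. Indeed, Lemma~\ref{WWPDtwist} says $T_\gamma$ acts on $\Cc_\gamma(\Yb)$ as a hyperbolic isometry with translation length $1$, and its action on the quasi-line $\Cc(\gamma)$ is a translation; hence the axis of $T_\gamma^n$ lies in $\Cc(\gamma)$ and every point there is moved by exactly $n$, i.e.\ by the translation length of $T_\gamma^n$. So any point of $\Cc(\gamma)$ is a legitimate minimally-moved basepoint.

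The key step is the specific choice $x_0 \in \pi_\gamma(\gamma') \subset \Cc(\gamma)$. Because $h$ fixes both vertices $\gamma, \gamma' \in \Yb$, the equivariance property (ii) of the $G$-action on the projection structure gives
\[
h \cdot \pi_\gamma(\gamma') \;=\; \pi_{h\gamma}(h\gamma') \;=\; \pi_\gamma(\gamma'),
\]
so $h x_0 \in \pi_\gamma(\gamma')$. Axiom (P0) then yields $d_{\Cc_\gamma(\Yb)}(x_0, h x_0) \leq \diam \pi_\gamma(\gamma') \leq \theta = 10$, independent of $h$.

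With this uniform bound in place, the ``furthermore'' claim is immediate: when $n > 10$, the word $w = [x_0, T_\gamma^n x_0]$ has length exactly $n > d(x_0, h x_0)$, so the geodesic from $x_0$ to $h x_0$ is too short to accommodate any translate of $w$, and the Brooks-style count yields $F(h) = 0$. For the boundedness assertion (where $n$ is merely assumed large enough for Proposition~\ref{qm} to apply), the same estimate $d(x_0, h x_0) \leq 10$ caps $|F(h)|$ by a constant depending only on $\theta$ and the defect of $F$. The only subtle point to verify carefully against the definition in \cite{BBF2016} is that ``a copy of $w$ appearing along $[x_0, h x_0]$'' really forces its endpoints to lie within some bounded neighborhood of that geodesic, so that a copy of length $n$ cannot contribute when the geodesic has length at most $10 < n$; this follows from the bottleneck geometry and the standard formulation of the count, and is the one place where one must be careful rather than handwave.
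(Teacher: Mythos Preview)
Your argument is correct and follows essentially the same route as the paper: choose the basepoint $x_{0}\in\pi_{\gamma}(\gamma')$, use equivariance of the projection structure to see that $h$ preserves this set, and then invoke (P0) to get $d(x_{0},hx_{0})\leq 10$, from which both conclusions follow since the word $w$ has length $n$. The paper phrases the final step as $|F(h)|\leq 10/n$ together with integrality of $F$, whereas you argue directly that no copy of $w$ fits along a geodesic of length $\leq 10<n$; these are the same observation.
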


\begin{proof}
    If $h$ preserves $\Cc(\gamma)$ and $\Cc(\gamma')$ then it must preserve the projections $\pi_{\gamma}(\gamma')$ and $\pi_{\gamma'}(\gamma)$. These projections are bounded in diameter by the projection constant, $10$. Now we can pick our basepoint, $x_{0}$, in the construction of $F$ to be in the set $\pi_{\gamma}(\gamma')$ so that $d_{\Cc_{\gamma}(\Yb)}(x_{0},h(x_{0})) \leq 10$. We conclude that $|F(h)| \leq \frac{10}{n}$ since the translation length of $T_{\gamma}^{n}$ is $n$. The furthermore statement follows from the fact that $F$ is integer valued. 
\end{proof}


\section{Proof of Theorem \ref{mainthm}}

We will actually prove a stronger theorem than stated in the introduction. 

\begin{theorem} \label{mainthm2}
    Let $S$ be an infinite-type surface with more than one end, $\Gamma = \{\gamma_{i}\}_{i\in \N}$ and $\Gamma' = \{\gamma'_{i}\}_{i\in\N}$ be two collections of disjoint end-separating curves that eventually leave every compact set so that $\gamma_{i}' \neq \gamma_{i}$ is a translate of $\gamma_{i}$ by a compactly-supported mapping class and $\gamma_{i}' \cap \gamma_{j} = \emptyset$ for all $i\neq j$, and $A = \{a_{i}\}_{i \in \N}$ be an unbounded sequence of natural numbers. Then the mapping class
    \begin{align*}
        f_{\Gamma,A} \defeq \prod_{i=1}^{\infty} T_{\gamma_{i}}^{a_{i}} \in \overline{\PMCGc(S)}
    \end{align*}
    cannot be written as a product of commutators in $\overline{\PMCGc(S)}$. The same also holds for products $\phi f_{\Gamma,A}$ where $\phi \in \PMCGcc{S}$ is a mapping class that fixes $\gamma_{i}$ and $\gamma_{i}'$ for infinitely many $i$. 
\end{theorem}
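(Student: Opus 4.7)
The plan is to build, for each index $i$, an antisymmetric quasimorphism $F_i$ on $\overline{\PMCGc(S)}$ that detects only the $T_{\gamma_i}^{a_i}$ factor of $f_{\Gamma,A}$, so that $F_i(f_{\Gamma,A})$ grows linearly in $a_i$. Since antisymmetric quasimorphisms are uniformly bounded on products of a fixed number of commutators, the unboundedness of $\{a_i\}$ will contradict the assumption that $f_{\Gamma,A}$ is such a product.

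First I would fix a universal integer $n_0$, independent of $i$, such that for every end-separating curve $\gamma \subset S$ the WWPD quadruple $(\overline{\PMCGc(S)}, \Cc_\gamma(\Yb), T_\gamma^{n_0}, \Stab(\gamma))$ produced by Lemma~\ref{WWPDtwist} satisfies the translation-length hypothesis of Proposition~\ref{qm}, and moreover $n_0 > 10$. This is possible because, by Proposition~\ref{bprojcomplex}, the projection and bottleneck constants of $\Cc_\gamma(\Yb)$ are independent of $\gamma$ and of $S$; hence the WWPD constant $\xi$ from Lemma~\ref{WWPDtwist} and the value $M = M(\Delta)$ from Proposition~\ref{qm} are universal, while $T_\gamma^{n_0}$ has translation length $n_0$ inside the isometrically embedded copy of $\Cc(\gamma)$. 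For each $i$ I would then apply Proposition~\ref{qm} to $g = T_{\gamma_i}^{n_0}$ with basepoint $x_0$ chosen in $\pi_{\gamma_i}(\gamma_i')$, exactly as in the proof of Lemma~\ref{qmh}. This produces an antisymmetric quasimorphism $F_i$ with $D(F_i) \leq 12$ and $F_i(T_{\gamma_i}^{k n_0}) \geq k/2 - 1$ for all $k \geq 0$; passing to the homogenization $\bar{F}_i$ and using $|\bar{F}_i - F_i| \leq D(F_i)$ gives the linear lower bound $F_i(T_{\gamma_i}^{a_i}) \geq a_i/(2 n_0) - 12$.

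The crux of the argument is to make the remaining twists invisible to $F_i$. Since the curves $\gamma_j$ are pairwise disjoint, the twists $T_{\gamma_j}$ commute, so I would rearrange $f_{\Gamma,A} = h_i \cdot T_{\gamma_i}^{a_i}$, where $h_i = \prod_{j \ne i} T_{\gamma_j}^{a_j} \in \overline{\PMCGc(S)}$; convergence of the tail products in the compact-open topology is ensured by the assumption that $\Gamma$ eventually leaves every compact set. Each factor $T_{\gamma_j}$ with $j \ne i$ is represented by a homeomorphism supported in an annular neighborhood of $\gamma_j$ disjoint from $\gamma_i \cup \gamma_i'$ (using the hypothesis $\gamma_i' \cap \gamma_j = \emptyset$ for $i \ne j$), hence fixes the isotopy classes of $\gamma_i$ and $\gamma_i'$. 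This property passes to the limit $h_i$, so Lemma~\ref{qmh} applies and yields $F_i(h_i) = 0$. Combining with the defect bound gives $F_i(f_{\Gamma,A}) \geq a_i/(2 n_0) - 24$.

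To close, I would suppose $f_{\Gamma,A} = \prod_{k=1}^{C}[g_k, h_k]$ in $\overline{\PMCGc(S)}$ and invoke the standard estimate $|F_i(f_{\Gamma,A})| \leq 4C \cdot D(F_i) \leq 48C$ for antisymmetric quasimorphisms; this forces $a_i \leq 2 n_0 (48C + 24)$ for every $i$, contradicting the unboundedness of $\{a_i\}$. For the \emph{furthermore} statement about $\phi f_{\Gamma,A}$ with $\phi$ fixing both $\gamma_i$ and $\gamma_i'$ for infinitely many $i$, Lemma~\ref{qmh} additionally gives $F_i(\phi) = 0$ for each such $i$, and the same contradiction is reached by restricting to that subsequence. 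The main obstacle I expect is the invisibility step: one must carefully justify that the limit element $h_i$, which lies in $\overline{\PMCGc(S)}$ rather than in $\PMCGc(S)$, genuinely preserves the isotopy classes of $\gamma_i$ and $\gamma_i'$ so that Lemma~\ref{qmh} can be invoked at the limit and not merely on finite truncations. Everything else reduces to bookkeeping of the universal constants produced by the projection complex machinery.
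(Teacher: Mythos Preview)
Your proposal is correct and follows essentially the same approach as the paper: the paper packages the core estimate into a separate lemma (their Lemma~\ref{contradiction}) and handles the non-divisibility of $a_i$ by $n_0$ via division with remainder rather than homogenization, but these are cosmetic differences. The obstacle you flag is not a real issue, since $h_i$ has a representative homeomorphism supported in disjoint annular neighborhoods of the $\gamma_j$ with $j\neq i$, hence is the identity near $\gamma_i\cup\gamma_i'$ and fixes both isotopy classes; the paper does not even pause on this point.
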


We can first note that since the $\gamma_{i}$ are disjoint and eventually leave every compact set, maps of the form $f_{\Gamma,A}$ are indeed defined and contained in $\overline{\PMCGc(S)}$. 

\begin{proof} [Proof of Theorem \ref{mainthm2}]
    
The proof is a direct application of the following lemma. 

\begin{lemma} \label{contradiction}
    For every $C > 0$, there exists an integer $N_{0} > 0$ with the following property: If $N>N_{0}$, $\gamma$ is an end-separating curve, and $h \in \PMCGcc{S}$ such that $h$ fixes $\gamma$ as well as some $\PMCGcc{S}$-translate of $\gamma$, then $g=hT^{N}_{\gamma}$ cannot be written as a product of $C$ commutators. 
\end{lemma}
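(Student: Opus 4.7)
The plan is to apply the quasimorphism machinery of Section~5 to build an antisymmetric quasimorphism $F\colon \PMCGcc{S} \to \R$ that grows linearly on $T_{\gamma}^{N}$ but vanishes on $h$, contradicting the universal bound $|F(g)| \leq 4C \cdot D(F)$ that any $C$-fold product of commutators must satisfy.

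Form the projection complex $\Cc_{\gamma}(\Yb)$ with $\Yb = \PMCGcc{S} \cdot \gamma$ via Proposition~\ref{bprojcomplex}; by Lemma~\ref{WWPDtwist}, $T_{\gamma}$ acts as a WWPD isometry of translation length one, with WWPD constant $\xi$ depending only on the projection constant. Pick an integer $N_{1} > \max(\xi + M,\, 10)$, where $M = M(\Delta)$ is the constant from Proposition~\ref{qm}, and apply that proposition to the WWPD element $T_{\gamma}^{N_{1}}$ to obtain an antisymmetric quasimorphism $F$ with $D(F) \leq 12$ and $F(T_{\gamma}^{n N_{1}}) \geq n/2 - 1$ for all $n \geq 1$. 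Since $h$ by hypothesis fixes $\gamma$ together with some other translate $\gamma' \in \Yb$, Lemma~\ref{qmh} lets us choose the basepoint in the construction of $F$ so that $F(h) = 0$; the condition $N_{1} > 10$ is precisely what upgrades the bound from the defect to the exact value zero.

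For arbitrary $N$, write $N = n N_{1} + r$ with $0 \leq r < N_{1}$. Applying the defect bound twice (once to split off the remainder and once to split off $h$) gives
\begin{align*}
|F(h T_{\gamma}^{N})| \;\geq\; F(T_{\gamma}^{n N_{1}}) - |F(T_{\gamma}^{r})| - 2 D(F) \;\geq\; \frac{N}{2 N_{1}} - K,
\end{align*}
where $K$ is a constant depending only on $N_{1}$, since the finite set $\{|F(T_{\gamma}^{r})| : 0 \leq r < N_{1}\}$ has a maximum. If $g = h T_{\gamma}^{N}$ could be written as a product of $C$ commutators, antisymmetry of $F$ would force $|F(g)| \leq 4 C \cdot D(F) \leq 48 C$; setting $N_{0} := 2 N_{1} (48 C + K + 1)$ therefore yields the desired contradiction for every $N > N_{0}$.

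The main technical point is ensuring the linear lower bound on $|F(h T_{\gamma}^{N})|$ holds uniformly for all $N > N_{0}$, and not merely for $N$ that are multiples of $N_{1}$; this is precisely what the defect bookkeeping above achieves, by absorbing the bounded remainder $T_{\gamma}^{r}$ into the additive constant $K$, while the contribution of $h$ is eliminated exactly by the basepoint choice from Lemma~\ref{qmh}.
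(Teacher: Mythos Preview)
Your argument is correct and follows essentially the same route as the paper's proof: build the projection complex for $\gamma$, apply Proposition~\ref{qm} to the WWPD element $T_{\gamma}^{N_{1}}$ with $N_{1}$ large enough, use Lemma~\ref{qmh} to force $F(h)=0$, and then play the linear growth of $F$ on powers of $T_{\gamma}^{N_{1}}$ against the universal commutator bound $|F(g)|\le 4C\cdot D(F)$. The only cosmetic difference is your treatment of the remainder: you bound $|F(T_{\gamma}^{r})|$ for $0\le r<N_{1}$ by a constant $K$, whereas the paper observes directly that $F(T_{\gamma}^{r})=0$ since $d_{\Cc_{\gamma}}(x_{0},T_{\gamma}^{r}x_{0})<d_{\Cc_{\gamma}}(x_{0},T_{\gamma}^{N_{1}}x_{0})$, so no copy of the word $w$ fits. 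This explicit vanishing is what guarantees your $K$ is uniform in $\gamma$ (which you need, since $N_{0}$ is chosen before $\gamma$); your sentence ``$K$ depends only on $N_{1}$'' is true, but the justification is exactly this translation-length observation.
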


\begin{proof}
    Let $\xi$ be the WWPD constant coming from Lemma \ref{WWPDtwist} and $M$ be the constant coming from Proposition \ref{qm} when we apply it to the quasi-tree arising from a projection complex as in Proposition~\ref{bprojcomplex}. Note that $\xi$ and $M$ depend only on the projection constant of 10 coming from Lemma \ref{BMCGproj}. In particular, they do not depend on $S$ or $\gamma$. 
    
    Next we let $N_{1} = \max\{M+\xi,11\}$, $N_{2} = 2(48C+25)$, and $N_{0} = N_{1}N_{2}$. Suppose $g = hT_{\gamma}^{N}$ as in the statement of the lemma. Apply Proposition \ref{bprojcomplex} to $S$ and the curve $\gamma$ to get an action of $\PMCGcc{S}$ on the quasi-tree $\Cc_{\gamma}(\Yb)$. Denote the length metric on $\Cc_{\gamma}(\Yb)$ by $d_{C_{\gamma}}$. By Lemma \ref{WWPDtwist}, $(\PMCGcc{S},\Cc_{\gamma}(\Yb),T_{\gamma}^{N_{1}},\Stab(\gamma))$ satisfies WWPD with constant $\xi$ and $T_{\gamma}^{N_{1}}$ has translation length $N_{1} > 10$ and $N_{1} \geq M + \xi$. By our choice of $N_{1}$ we can apply Proposition \ref{qm} and Lemma \ref{qmh} to $(\PMCGcc{S},\Cc_{\gamma}(\Yb),T_{\gamma}^{N_{1}},\Stab(\gamma))$ in order to build a quasimorphism $F: \PMCGcc{S} \rightarrow \R$ with basepoint $x_{0}$ so that $F(h) = 0$. Thus we see that 
    \begin{align*}
        |F(hT_{\gamma}^{N}) - F(T_{\gamma}^{N})| < 12.
    \end{align*}
    
    Now write $N = AN_{1} + B$ for $A \geq N_{2}$ and $B<N_{1}$. Note that $B < N_{1}$ so that $d_{\Cc_{\gamma}}(x_{0},T_{\gamma}^{B}x_{0}) < d_{\Cc_{\gamma}}(x_{0},T_{\gamma}^{N_{1}}x_{0})$ and hence $F(T_{\gamma}^{B}) = 0$. Then we have
    \begin{align*}
        |F(T_{\gamma}^{AN_{1}+B}) - F((T_{\gamma}^{N_{1}})^{A}) - F(T_{\gamma}^{B})| &= |F(T_{\gamma}^{AN_{1}+B}) - F((T_{\gamma}^{N_{1}})^{A})| < 12.
    \end{align*}

    We chose $N_{2}$ so that $F(T_{\gamma}^{N_{1}})^{A} > 48C + 24$ by Proposition \ref{qm}(ii). Thus we see that
    \begin{align*}
        F(T_{\gamma}^{N}) > 48C + 12,
    \end{align*}
    and 
    \begin{align*}
        F(g) = F(hT_{\gamma}^{N}) > 48C.
    \end{align*}
    
    If $g$ could be written as a product of $C$ commutators than we would have $F(g) \leq 48C$, contradicting the lower bound found above.
\end{proof}

To finish the proof of our theorem we simply note that since $A$ is unbounded, for any $C>0$ we can always write $f_{\Gamma,A} = hT_{\gamma_{i}}^{a_{i}}$ with $a_{i} > N_{0}(C)$ coming from Lemma \ref{contradiction}. Here $h$ will be the product of all Dehn twists appearing in $f_{\Gamma,A}$ other than the twists about $\gamma_{i}$ and so satisfies the conditions of Lemma \ref{contradiction}. For the final claim we simply include $\phi$ into the expression for $h$.  

\end{proof}

Finally, we can conclude Theorem \ref{mainthm} from Theorem \ref{mainthm2} provided that the families of curves as in the statement of Theorem \ref{mainthm2} always exist. To find such a family of curves we can fix a separating principal exhaustion of a given infinite-type surface $S$ (with at least two ends) and take $\Gamma = \{\gamma_{i}\}_{i\in\N}$ to be a choice of one boundary curve of each $S_{i}$ in the exhaustion. Now for each $i$ pick a curve $\alpha_{i}$ in $S_{i+1} \setminus S_{i-1}$ that intersects $\gamma_{i}$. Set $\gamma_{i}'= T_{\alpha_{i}}(\gamma_{i})$. These collections of curves $\Gamma$ and $\Gamma'$ satisfy the hypotheses of Theorem \ref{mainthm2}. This proves Theorem \ref{mainthm} provided that $S$ has at least two ends. The one-ended case is proved in the appendix by applying the Birman exact sequence.

\section{Pseudo-Anosovs on disjoint subsurfaces} 

Now we see that the proof of Theorem \ref{mainthm2} also works when we replace the Dehn twists by pseudo-Anosovs supported on homeomorphic disjoint subsurfaces. This version will be used in the following section to prove that $H_{1}(\overline{\PMCGc(S)};\Z)$ contains an uncountable direct sum of $\Q$'s when $S$ has genus less than $3$. 

\begin{theorem} \label{mainthm3}
    Let $S$ be an infinite-type surface with at least two ends, $\mathcal{B} = \{B_{i}\}_{i\in\N}$ be a collection of disjoint subsurfaces of $S$, each of which is end separating and is homeomorphic to some fixed finite-type surface $B$ of topological complexity at least 5 and $A = \{a_{i}\}_{i\in \N}$ be an unbounded sequence of natural numbers. Suppose that $f\in \PMCG(B)$ is a pseudo-Anosov and let $f_{i} \in \PMCGc(S)$ be the mapping class that is equal to $f$ on $B_{i}$ and the identity outside of $B_{i}$. Then the mapping class 
    \begin{align*}
        f_{\mathcal{B},A} \defeq \prod_{i=1}^{\infty}f_{i}^{a_{i}} \in \PMCGcc{S}
    \end{align*}
    cannot be written as a product of commutators in $\PMCGcc{S}$. The same also holds for any mapping class of the form $\phi f_{\mathcal{B},A}$ where $\phi \in \PMCGcc{S}$ fixes $B_{i}$ for infinitely many $i$. 
\end{theorem}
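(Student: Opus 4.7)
The plan is to mirror the proof of Theorem \ref{mainthm2} step-for-step, substituting end-separating finite-type subsurfaces $B_{i}$ for end-separating curves $\gamma_{i}$ and the partial pseudo-Anosovs $f_{i}$ for Dehn twists. Fix an index $i$. Applying Proposition \ref{bprojcomplex} with $A=B_{i}$ produces a $\delta$-hyperbolic blown-up projection complex $\Cc_{B_{i}}(\Yb)$ on which $\PMCGcc{S}$ acts by isometries, where $\Yb=\{g(B_{i}) : g\in\PMCGcc{S}\}$. Both Lemma \ref{curveoverlap}(ii) and Lemma \ref{BMCGproj} already treat the subsurface case, so no modifications are needed at this stage; all constants depend only on the projection constant $\theta=10$ and on the uniform hyperbolicity constant, not on $S$ or $B$.

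I would then establish the pseudo-Anosov analogue of Lemma \ref{WWPDtwist}: for every $N\geq 1$ the quadruple $(\PMCGcc{S},\Cc_{B_{i}}(\Yb),f_{i}^{N},\Stab(B_{i}))$ satisfies WWPD with a constant $\xi$ depending only on the projection constant. Since $f$ is pseudo-Anosov on $B$, the restriction $f_{i}|_{B_{i}}$ acts on $\Cc(B_{i})$ as a hyperbolic isometry with positive translation length $\tau_{f}>0$, and since $\Cc(B_{i})$ embeds isometrically into $\Cc_{B_{i}}(\Yb)$, the element $f_{i}^{N}$ is a hyperbolic isometry of the blown-up complex with translation length $N\tau_{f}$. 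For the WWPD condition, fix a quasi-axis $\ell\subset\Cc(B_{i})$; for any $h\in\PMCGcc{S}\setminus\Stab(B_{i})$ the translate $h\cdot\ell$ lies in $\Cc(h(B_{i}))$ with $h(B_{i})\neq B_{i}$, and by Proposition \ref{proj} its nearest-point projection onto $\ell$ is uniformly bounded in terms of the projection constant.

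Next I would prove the analogue of Lemma \ref{qmh}: if $h\in\PMCGcc{S}$ fixes both $B_{i}$ and some other $B'\in\Yb$, then $h$ preserves the projections $\pi_{B_{i}}(B')$ and $\pi_{B'}(B_{i})$, which have diameter at most $10$. Choosing the basepoint $x_{0}$ in the Brooks-style construction of Proposition \ref{qm} to lie in $\pi_{B_{i}}(B')$ yields $d_{\Cc_{B_{i}}(\Yb)}(x_{0},h(x_{0}))\leq 10$, so the resulting antisymmetric quasimorphism $F$ satisfies $|F(h)|\leq 10/(N\tau_{f})$, and in particular $F(h)=0$ once $N\tau_{f}>10$. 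The final contradiction is then identical to Lemma \ref{contradiction}: given a putative commutator count $C$, fix a power $N_{1}$ with $N_{1}\tau_{f}$ exceeding both $10$ and the threshold $\xi+M$ required by Proposition \ref{qm}, build the quasimorphism $F$ for $f_{i}^{N_{1}}$, and use Proposition \ref{qm}(ii) to find $N_{0}=N_{0}(C)$ so that $F(f_{i}^{a})>48C+12$ whenever $a>N_{0}$. Since $A$ is unbounded, pick $i$ with $a_{i}>N_{0}$ and write $f_{\mathcal{B},A}=h\cdot f_{i}^{a_{i}}$ with $h=\phi\prod_{j\neq i}f_{j}^{a_{j}}$. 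Because the $B_{k}$ are pairwise disjoint and $\phi$ fixes $B_{i}$, $h$ fixes $B_{i}$ and every $B_{j}$ with $j\neq i$, so $F(h)=0$ and the defect bound gives $F(f_{\mathcal{B},A})>48C$, contradicting expressibility as a product of $C$ commutators.

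The main obstacle is verifying that a partial pseudo-Anosov genuinely acts as a WWPD element on the blown-up complex; this reduces to showing that $f_{i}|_{B_{i}}$ is hyperbolic on $\Cc(B_{i})$ with definite translation length (standard for pseudo-Anosovs once $\Cc(B_{i})$ is an infinite-diameter Gromov-hyperbolic graph) together with the uniform projection bound from Proposition \ref{proj}. The hypothesis that $B$ has topological complexity at least $5$ enters precisely to guarantee that $\Cc(B)$ supports pseudo-Anosov isometries and has infinite diameter, and it provides sufficient room for the downstream use of $B$ in Section 8.
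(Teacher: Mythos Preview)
Your outline has two genuine gaps that distinguish the subsurface case from the Dehn-twist case; both are addressed in Section~7 of the paper but are absent from your proposal.

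First, Proposition~\ref{qm} applies only when the space is a quasi-tree. When $A=\gamma$ is a curve, $\Cc(\gamma)$ is quasi-isometric to $\Z$ and the blown-up complex $\Cc_{\gamma}(\Yb)$ is a quasi-tree. But when $A=B_{i}$ is a non-annular subsurface, $\Cc(B_{i})$ is $\delta$-hyperbolic yet \emph{not} a quasi-tree, and Proposition~\ref{bprojcomplex} only gives that $\Cc_{B_{i}}(\Yb)$ is $\delta$-hyperbolic. You cannot invoke Proposition~\ref{qm} directly on $\Cc_{B_{i}}(\Yb)$. The paper fixes this with a second projection-complex construction (Proposition~\ref{qtfromwwpd}): once WWPD is established, one takes the $G$-orbit of a quasi-axis of $f$ as a new family $\Yb'$ of quasi-lines, verifies the projection axioms, and obtains a genuine quasi-tree $Q$ on which $(\PMCGcc{S},Q,f,C)$ again satisfies WWPD.

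Second, your WWPD claim with $C=\Stab(B_{i})$ is false as stated. Condition~(iv) of the definition requires every element of $C$ to fix the endpoints $x_{\pm\infty}$ of $f_{i}$, but $\Stab(B_{i})$ contains, for instance, any other pseudo-Anosov on $B_{i}$ with a different pair of fixed points in $\partial\Cc(B_{i})$. The correct $C$ is the subgroup of $\Stab(B_{i})$ that fixes $x_{\pm\infty}$, and then one must bound the projection of $h\cdot\ell$ to $\ell$ for $h\in\Stab(B_{i})\setminus C$, i.e., for elements that preserve $\Cc(B_{i})$ but move the axis of $f_{i}$ inside it. Proposition~\ref{proj} says nothing about such $h$; this bound is precisely the WPD property of pseudo-Anosovs acting on curve graphs (Proposition~\ref{pAWPD}, due to Bestvina--Fujiwara), which is the actual source of the complexity~$\geq 5$ hypothesis. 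The paper invokes this together with \cite[Proposition~4.20]{BBF2015} to promote WPD on $\Cc(B_{i})$ to WWPD on $\Cc_{B_{i}}(\Yb)$. A side effect of the two-step construction is that the analogue of Lemma~\ref{qmh} changes form: the basepoint now lives on a quasi-axis inside $Q$, and the natural hypothesis becomes that $h$ acts as the identity on $B_{i}$ (so it fixes that quasi-axis pointwise), rather than merely preserving $B_{i}$ and some translate.
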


To prove this we want to follow the same steps used in the proof above. Proposition \ref{bprojcomplex} gives an action of $\PMCGcc{S}$ on a blown-up projection complex built out of the curve graphs of the $\PMCGcc{S}$-orbit of $B_{i}$ for each $i$. Just as above we will build quasimorphisms on $\PMCGcc{S}$ for each $B_{i}$. For now we assume that $i$ is fixed and abuse notation to write $B=B_{i}$. 

Next we have to do two things. We have an action of $\PMCGcc{S}$ on a $\delta$-hyperbolic space, but we really need an action on a quasi-tree. Also we need to see that a pseudo-Anosov, $f$, supported on $B$ is a WWPD element for this action. This will all follow from work in \cite{BF2002}, \cite{BBF2015}, and \cite{BBF2016}.

The first step is to see that $f$ acts as a WWPD element on $\Cc_{B}(\Yb)$. The following proposition informs us that $f$ is a WPD element for the action of $\PMCG(B)$ on the curve graph of $B$. Since curve graphs are $\delta$-hyperbolic we can take as a definition for WPD to be that $f$ is a WWPD element together with the extra condition that the subgroup $C$ in the definition of WWPD is virtually cyclic. 

\begin{proposition} [\cite{BF2002}, Proposition 11] \label{pAWPD}
    Let $A$ be a finite-type surface of topological complexity at least $5$. The action of $\MCG(A)$ on the curve graph of $A$ is WPD. In particular, every pseudo-Anosov is a WPD element.
\end{proposition}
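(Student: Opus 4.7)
My plan is to verify the WWPD condition (as defined earlier in the paper) for a pseudo-Anosov $f \in \MCG(A)$, taking the subgroup $C$ to be the pointwise stabilizer of the pair of ideal endpoints of a quasi-axis of $f$, and then to show that $C$ is virtually cyclic; together these give WPD. Since pseudo-Anosovs act loxodromically on $\Cc(A)$ with positive translation length (Masur--Minsky), $f$ admits a quasi-axis $\ell \subset \Cc(A)$ whose ideal endpoints are the stable and unstable measured laminations $\mu^{\pm}$; these are arational and fill $A$ because the topological complexity of $A$ is at least $5$.

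For the WWPD inequality I would show that if $h \in \MCG(A)$ does not fix $\{\mu^+, \mu^-\}$ pointwise, then the nearest-point projection of $h \cdot \ell$ to $\ell$ has uniformly bounded diameter. Suppose for contradiction the projection has large diameter; by $\delta$-hyperbolicity of $\Cc(A)$ this forces a long subsegment of $h \cdot \ell$ to fellow-travel $\ell$. The Masur--Minsky bounded geodesic image theorem, applied to annular subsurfaces associated to curves along $\ell$, forces the subsurface projections along the axis to grow linearly with position, and in particular this linear growth propagates the fellow-traveling to the ideal boundary, so the asymptotic directions of $h \cdot \ell$ and $\ell$ must agree: $\{h\mu^+, h\mu^-\} = \{\mu^+, \mu^-\}$. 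After passing through the index-at-most-$2$ flipping subgroup one obtains $h \in C$, a contradiction.

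To finish, I would invoke the classical result that the $\MCG(A)$-stabilizer of a pair of arational filling laminations is virtually infinite cyclic, virtually generated by $f$ itself (due to McCarthy; one can alternatively exploit the North--South dynamics of $f$ on Thurston's boundary of Teichm\"uller space to show any element of $C$ commutes with $f$ up to finite index). Hence $C$ is virtually cyclic, and the quadruple $(\MCG(A), \Cc(A), f, C)$ satisfies WPD. I expect the main obstacle to be the endpoint-matching step: $\delta$-hyperbolicity alone only produces fellow-traveling on a uniformly bounded scale, so one genuinely needs the subsurface-projection machinery to upgrade a long but finite fellow-travel to agreement at infinity, ruling out the possibility that $h \cdot \ell$ bends away from $\ell$ just past the observed segment. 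Once that is in place the remainder is essentially the argument of \cite{BF2002}.
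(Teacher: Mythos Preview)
The paper does not prove this proposition at all: it is quoted verbatim as Proposition 11 of \cite{BF2002} and used as a black box. So there is no ``paper's own proof'' to compare against; any comparison must be with the original argument of Bestvina--Fujiwara.

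Your outline has the right global structure---WWPD plus virtual cyclicity of the endpoint stabilizer---and the third step (virtual cyclicity of the stabilizer of $\{\mu^+,\mu^-\}$) is indeed a standard ingredient. The weak point is exactly the one you flag: the endpoint-matching step. Your proposed mechanism, ``apply bounded geodesic image to annular subsurfaces along $\ell$ so that projections grow linearly and propagate fellow-traveling to infinity,'' does not do what you need. For a \emph{single} $h$, a large but finite projection of $h\ell$ onto $\ell$ gives a long fellow-traveling segment, but no amount of subsurface-projection growth along $\ell$ upgrades one finite overlap to agreement of ideal endpoints; $h\ell$ can and will bend away eventually. What WWPD requires is a \emph{uniform} bound over all $h\notin C$, so the correct contradiction hypothesis is a sequence $h_n\notin C$ with overlaps tending to infinity, and the argument must extract a limit. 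The Bestvina--Fujiwara proof does this via convergence in $\mathcal{PML}$ (or equivalently via nested train-track neighborhoods of $\mu^\pm$): long fellow-traveling forces $h_n\mu^\pm$ into shrinking neighborhoods of $\mu^\pm$, and properness of the action on $\mathcal{PML}$ minus the diagonal (or the virtual cyclicity of the stabilizer) then yields the finiteness needed for WPD. If you want to make your sketch work, replace the annular-projection heuristic with this compactness/convergence argument.
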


Now Proposition 4.20 in \cite{BBF2015} tells us that this WPD element for the action on a single curve graph gives a WWPD element for the action on the entire blown up projection complex with WWPD constant depending only on the projection constant. Finally we can use the following proposition to upgrade our WWPD action on a $\delta$-hyperbolic graph to a WWPD action on a quasi-tree.

\begin{proposition}[\cite{BBF2016}, Proposition 2.9] \label{qtfromwwpd}
    Let $X$ be a $\delta$-hyperbolic graph and assume $(G,X,g,C)$ satisfies WWPD with constant $\xi = \xi_{g}^{X}$. Then there is an action of $G$ on a quasi-tree $Q$ such that:
    \begin{enumerate}[(i)]
        \item
            The bottleneck constant, $\Delta$, for $Q$ depends only on $\delta$ and $\xi$ and is bounded by a multiple of $\delta + \xi + 1$,
        \item
            $(G,Q,g,C)$ satisfies WWPD with $\xi_{g}^{Q}$ bounded by a multiple of $\delta+\xi+1$.
    \end{enumerate}
\end{proposition}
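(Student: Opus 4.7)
The plan is to build $Q$ as a blown-up projection complex whose vertices are $G$-translates of a quasi-axis of $g$. Fix a quasi-axis $\ell$ for $g$ in $X$ and consider
\[
\Yb = \{h \cdot \ell : h \in G\},
\]
where we identify $h\ell \sim h'\ell$ whenever $h^{-1}h' \in C$; the WWPD hypothesis (that $C\ell$ lies in a Hausdorff neighborhood of $\ell$) makes this identification geometrically natural. For each $Y = [h\ell]\in\Yb$, set $\Cc(Y)$ to be the quasi-line $h\ell$. Each $\Cc(Y)$ is a quasi-tree with bottleneck constant depending only on the quasi-geodesic constants of $\ell$, which depend only on $\delta$. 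Define $\pi_Y(Z)\subset\Cc(Y)$ to be the coarse nearest-point projection (in $X$) of a chosen quasi-line representative of $Z$ onto $Y$.

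Next I would verify the projection axioms from \cite{BBF2015} with projection constant $\theta$ depending only on $\delta$ and $\xi$. Axiom (P0) is where WWPD enters directly: for $Y\neq Z$ the projection of $Z$ onto $Y$ in $X$ has diameter bounded by $\xi$ up to an additive error controlled by $\delta$ (the coarseness of nearest-point projection in a $\delta$-hyperbolic space). Axioms (P1) and (P2) are the Behrstock-type inequality and the local finiteness of the collection of axes with large pairwise projections; both are standard consequences of $\delta$-hyperbolicity once one has the uniform bound in (P0). With the axioms verified, applying Theorem \ref{QT} to $\{(\Cc(Y),\pi_Y)\}_{Y\in\Yb}$ for $K>3\theta$ yields that $Q \defeq \Cc_K(\Yb)$ is a quasi-tree whose bottleneck constant $\Delta$ is bounded by a universal multiple of $\theta+\Delta_0+1$, where $\Delta_0$ is the uniform bottleneck constant of the lines $\Cc(Y)$; thus $\Delta$ depends only on $\delta$ and $\xi$.

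For the $G$-action: $G$ permutes $\Yb$ by $h\cdot[k\ell] = [hk\ell]$ and, because the projections $\pi_Y$ are intrinsically defined using nearest-point projections in $X$, this extends to an isometric action on $Q$ that preserves the projection structure. To establish (ii), note that $g$ fixes $[\ell]\in\Yb$ and acts on $\Cc([\ell])$ as the translation of $\ell$ by $g$, so $\Cc([\ell])$ is a genuine axis for $g$ in $Q$. By the definition of $C$, each element of $C$ fixes $[\ell]$ and preserves the two endpoints of $\ell$ at infinity within $\Cc([\ell])\subset Q$. For any $h\in G\setminus C$, the vertex $h\cdot[\ell]$ is distinct from $[\ell]$ in $\Pc_K(\Yb)$, so by Proposition \ref{proj} the nearest-point projection of $h\cdot\ell$ onto $\ell$ in $Q$ is contained in a uniform neighborhood of $\pi_{[\ell]}(h\cdot[\ell])$, whose diameter is at most $\theta$; this yields $\xi_g^Q$ bounded by a multiple of $\delta+\xi+1$.

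The main obstacle is the careful verification of the projection axioms (P1) and (P2), which require upgrading the WWPD hypothesis (a single-axis statement about projections onto $\ell$) to uniform pairwise bounds on all of $\Yb$. Equivariance handles most of this once one is willing to absorb $\delta$-dependent constants, but a Morse-Lemma style argument in $X$ is needed to control projections between pairs of translates $h\ell$ and $h'\ell$ that are neither $\ell$ itself. A secondary subtlety is to confirm that the WWPD stabilizer in the new action does not grow: any element of $G$ that fixes $[\ell]$ and its two ends in $Q$ must already preserve $\ell$ and fix $x_{\pm\infty}$ in $X$, which by definition places it in $C$.
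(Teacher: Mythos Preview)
Your proposal is correct and follows essentially the same route as the paper's sketch: build $Q$ as a blown-up projection complex over the $G$-translates of a quasi-axis of $g$, verify the projection axioms with constant depending only on $\delta$ and $\xi$, and read off (i) from Theorem~\ref{QT}. The only cosmetic differences are that the paper indexes $\Yb$ by equivalence classes of conjugates of $g$ with parallel quasi-axes (and takes the union of these axes as the quasi-line $\Cc(Y)$) rather than by $C$-cosets, and that the paper dispatches (ii) by citing Proposition~4.20 of \cite{BBF2015} directly, whereas you unwind that argument via Proposition~\ref{proj}; the WWPD hypothesis guarantees that the two indexings coincide, so neither difference is substantive.
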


\begin{proof} [Sketch of Proof]
    We apply the projection complex construction again. Say two conjugates of $g$ are equivalent if they have parallel quasi-axes. Now for each equivalence class we take the union of the quasi-axes of its members. This is a quasi-line with the subspace metric. Let the collection $\Yb$ be all of these quasi-lines. This collection will satisfy the projection axioms and so when we construct the projection complex we get a quasi-tree, $Q$. We get (i) by realizing that the projection constant used in the construction only depends on $\delta$ and $\xi$. (ii) follows again from Proposition 4.20 in \cite{BBF2015}. 
\end{proof}

We collect these facts in the following lemma.

\begin{lemma}
    Let $S$ be an infinite-type surface and suppose that $B\subset S$ is an end-separating subsurface of topological complexity at least $5$. Given $f \in \PMCG(B)$ a pseudo-Anosov, there exists a subgroup $C < \PMCGcc{S}$ and quasi-tree $Q$ so that $(\PMCGcc{S},Q,f,C)$ satisfies WWPD with constant $\xi$. Furthermore, Q is a quasi-tree with bottleneck constant that does not depend on how $B$ embeds as a subsurface of $S$ and $(\PMCGcc{S},Q,f^{n},C)$ also satisfies WWPD with constant $\xi$ for any integer $n\geq1$. 
\end{lemma}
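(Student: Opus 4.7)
The plan is to assemble the pieces laid out in the discussion preceding the lemma. First I would apply Proposition \ref{bprojcomplex} with $A = B$ to the orbit $\Yb = \{g(B) : g \in \PMCGcc{S}\}$; this is legitimate by Lemma \ref{curveoverlap}(ii), which ensures that any two distinct orbit members of the end-separating subsurface $B$ overlap. The output is an action of $\PMCGcc{S}$ on the blown-up projection complex $\Cc_B(\Yb)$, which is $\delta$-hyperbolic with $\delta$ depending only on the universal hyperbolicity constant of curve graphs and the projection constant $10$ from Lemma \ref{BMCGproj}; in particular $\delta$ is independent of how $B$ sits inside $S$.

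Next I would promote the pseudo-Anosov $f \in \PMCG(B)$ to a WWPD element for this action. Since $B$ has topological complexity at least $5$, Proposition \ref{pAWPD} supplies the WPD property of $f$ on the single curve graph $\Cc(B)$. Proposition~4.20 of \cite{BBF2015}, cited in the preceding discussion, then transports this WPD property to a WWPD property of $f$ acting on the entire blown-up projection complex $\Cc_B(\Yb)$, with WWPD constant $\xi'$ depending only on the projection constant. The accompanying subgroup $C < \PMCGcc{S}$ is the subgroup that fixes the pair of endpoints of (a quasi-axis of) $f$ at infinity, which can be identified with the extension to $\PMCGcc{S}$ of the virtually cyclic stabilizer coming from the WPD of $f$ on $\Cc(B)$. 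Finally, apply Proposition \ref{qtfromwwpd} to upgrade the WWPD action on the $\delta$-hyperbolic space $\Cc_B(\Yb)$ to a WWPD action on a genuine quasi-tree $Q$: the resulting bottleneck constant $\Delta$ and WWPD constant $\xi$ are each bounded by a multiple of $\delta + \xi' + 1$ and are therefore still independent of $S$ and of the embedding of $B$.

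For the claim about $f^n$, observe that $f^n$ has the same two fixed points at infinity as $f$ and a quasi-axis contained in a uniform neighborhood of the quasi-axis of $f$; hence the same subgroup $C$ fixes its endpoints, and if any $h \in \PMCGcc{S} \setminus C$ projects to the quasi-axis of $f$ with diameter $\leq \xi$, then the same bound holds on the quasi-axis of $f^n$. This is exactly the mechanism already exploited in Lemma \ref{WWPDtwist} for powers of Dehn twists. The main obstacle, as I see it, is not any single step, each of which is a direct application of a quoted result, but rather the bookkeeping: one must verify that every constant in the chain ($\delta$, the projection constant $10$, $\xi'$, $\Delta$, and $\xi$) depends only on universal data and not on $S$ or on the chosen embedding $B \hookrightarrow S$. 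This uniformity is precisely what will enable the quasimorphism construction in the following section to be applied simultaneously across the disjoint family $\{B_i\}$ with a single uniform defect bound.
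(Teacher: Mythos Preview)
Your proposal is correct and follows essentially the same route as the paper: apply Proposition~\ref{bprojcomplex} to the orbit of $B$, invoke Proposition~\ref{pAWPD} and Proposition~4.20 of \cite{BBF2015} to obtain WWPD on $\Cc_B(\Yb)$ with $C$ the stabilizer of the endpoints of $f$, then upgrade to a quasi-tree via Proposition~\ref{qtfromwwpd}, tracking that all constants depend only on the universal projection constant and the uniform hyperbolicity of curve graphs. Your justification for the $f^n$ claim and your explicit bookkeeping of the constant dependencies are in fact slightly more detailed than what the paper records.
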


\begin{proof}
    We first apply Proposition~\ref{bprojcomplex} to obtain an action of $\PMCGcc{S}$ on a blown-up projection complex, $\Cc_{B}$, built out of copies of the curve graph of $B$. Let $C$ be the subgroup of $\Stab_{\PMCGcc{S}}(B)$ that fixes pointwise the points at infinity in the curve graph for $B$ fixed by $f$. Then since $f$ is a WPD element for the action on a single curve graph by Proposition~\ref{pAWPD}, we can apply Proposition 4.20 in \cite{BBF2015} to get that $(\PMCGcc{S},\Cc_{B},f,C)$ satisfies WWPD. Finally we can apply Proposition~\ref{qtfromwwpd} to replace $\Cc_{B}$ by a quasi-tree $Q$ so that $(\PMCGcc{S},Q,f,C)$ satisfies WWPD. 
    
    The furthermore statement follows from the fact that the these constants can be taken to depend only on the hyperbolicity constant of the curve graph of $B$ and the project constant for $\Cc_{B}$. 
\end{proof}

This lemma allows us to apply Proposition \ref{qm} by passing to a sufficiently large power, $n$, of $f$. Now we get an analogous result as in Lemma \ref{qmh}. 

\begin{lemma}
    Let $(\PMCGcc{S},Q,f^{n},C)$ be as above. If $h \in \PMCGcc{S}$ acts as the identity on $B$ then the quasimorphism $F$ obtained via Proposition \ref{qm} (when $n$ is sufficiently large) can be chosen to be trivial on $h$. 
\end{lemma}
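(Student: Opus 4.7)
The plan is to mirror the strategy of Lemma \ref{qmh}: I will arrange for the basepoint $x_0$ used in the Brooks-style construction of $F$ to be fixed by $h$, so that the geodesic $[x_0, h x_0]$ is trivial and no (oriented) copy of the template word $w = [x_0, f^n x_0]$ can appear along it, forcing $F(h) = 0$ directly.

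First I would argue that $h$ fixes a quasi-axis of $f^n$ pointwise already in the original blown-up projection complex $\Cc_B = \Cc_B(\Yb)$. Since $h$ restricts to the identity on $B$, it fixes every homotopy class of simple closed curve in $B$ and therefore acts as the identity on $\Cc(B)$. The pseudo-Anosov $f^n$ has a quasi-axis $\ell$ contained in $\Cc(B)$, and $\Cc(B)$ embeds isometrically into $\Cc_B$ once $K$ is large (by \cite{BBF2015}, Lemma 4.2). Hence $h$ fixes $\ell$ pointwise as a subset of $\Cc_B$.

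Next I would propagate this fixed subset through the second projection complex construction of Proposition \ref{qtfromwwpd}. In that construction one groups parallel quasi-axes of conjugates of $f^n$ into a collection of quasi-line vertices and then forms the blown-up projection complex $Q$, in which each quasi-line vertex is isometrically embedded (again for $K$ large). The quasi-line $Y_0$ containing $\ell$ is such a vertex, so $\ell$ sits isometrically inside $Q$ as a subset of $Y_0$, and $h$ still fixes $\ell \subset Q$ pointwise. Proposition \ref{qm} permits (and in fact requires) us to choose the basepoint $x_0$ as a point moved minimally by $f^n$; any point on $\ell$ qualifies, so I would take $x_0 \in \ell$. Then $h x_0 = x_0$, the geodesic $[x_0, h x_0]$ is a single point, no copy of $w$ appears along it, and $F(h) = 0$.

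The main obstacle I anticipate is verifying that the flexibility in the choice of basepoint $x_0$ in Proposition \ref{qm} is enough to place it on $\ell$ without disturbing any other properties used in the lemma's proof. This is handled in the proof of Proposition 3.1 in \cite{BBF2016}, where $x_0$ may be chosen to be any point on (or near) the axis. Beyond this, everything is a matter of invoking the isometric embedding statements from the two-stage projection complex construction, so no substantive new computation is required.
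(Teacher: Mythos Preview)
Your proposal is correct and follows essentially the same approach as the paper's proof. Both argue that $h$ fixes $\Cc(B)$ (and hence a quasi-axis of $f$) pointwise in $\Cc_B(\Yb)$, observe that this quasi-axis is one of the vertex objects in the second projection complex producing $Q$, and then place the basepoint $x_0$ on it so that $F(h)=0$; you simply spell out the isometric-embedding step more carefully than the paper does.
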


\begin{proof}
    $h$ acts as the identity on $B$ and so it fixes $\Cc(B)$ pointwise in $\Cc_{B}(\Yb)$. Thus $h$ must fix pointwise a quasi-axis of $f$ in $\Cc(B)$. This quasi-axis is one of the objects used to construct the projection complex $Q$. Now when we build $F$ we can take the basepoint to be on this quasi-axis so that $F(h)=0$. 
\end{proof}

We can also follow the exact same proof for Lemma \ref{contradiction} to get a version in this setting. 

\begin{lemma} \label{contradictionpa}
    Let $S$ be an infinite-type surface and $f \in \PMCGcc{S}$ a partial pseudo-Anosov supported on an end-separating subsurface $B$ of $S$. For every $C>0$, there exists an integer $N_{0}>0$, dependent on $f$, with the following property: If $N > N_{0}$ and $h \in \PMCGcc{S}$ such that $h$ fixes $B$, then $g=hf^{N}$ cannot be written as a product of $C$ commutators. 
\end{lemma}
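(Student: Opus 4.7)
The plan is to transcribe the proof of Lemma \ref{contradiction} essentially verbatim, making three substitutions: the partial pseudo-Anosov $f$ replaces the Dehn twist $T_\gamma$, the quasi-tree $Q$ furnished by the preceding lemma replaces $\Cc_\gamma(\Yb)$, and the two lemmas immediately preceding \ref{contradictionpa} (WWPD for all positive powers of $f$ on $Q$, and triviality of $F$ on mapping classes acting as the identity on $B$) play the roles of Lemmas \ref{WWPDtwist} and \ref{qmh}. All the genuine work has been done upstream; what remains is bookkeeping.

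Concretely, first I would fix the WWPD constant $\xi$ for $(\PMCGcc{S}, Q, f, C)$ together with the constant $M = M(\Delta)$ of Proposition \ref{qm}. These depend only on the bottleneck constant of $Q$, and hence only on the projection constant $10$ and on the hyperbolicity constant of the curve graph of $B$. Let $\tau_f > 0$ denote the translation length of $f$ on $Q$; this is the single ingredient that depends on the particular pseudo-Anosov chosen, and it is the source of the dependence of $N_0$ on $f$. Choose $N_1$ to be the least positive integer with $N_1 \tau_f \geq M + \xi$, set $N_2 = 2(48C + 26)$, and put $N_0 = N_1 N_2$. Given $N > N_0$ and $h$ as in the hypothesis, I apply Proposition \ref{qm} together with the preceding lemma to $f^{N_1}$ to obtain an antisymmetric quasimorphism $F \colon \PMCGcc{S} \to \R$ satisfying $D(F) \leq 12$, $F((f^{N_1})^n) \geq n/2 - 1$ for all $n \geq 0$, and (by a suitable basepoint choice) $F(h) = 0$.

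Writing $N = A N_1 + B$ with $A \geq N_2$ and $0 \leq B < N_1$, the Brooks-style definition of $F$ forces $F(f^B) = 0$, since the geodesic $[x_0, f^B x_0]$ is a proper subsegment of $[x_0, f^{N_1} x_0]$ and so cannot contain any non-overlapping translate of the latter. Two applications of the defect inequality, together with the lower bound on $F((f^{N_1})^A)$ from Proposition \ref{qm}(ii), then yield
\begin{align*}
F(g) \;=\; F(h f^N) \;\geq\; F((f^{N_1})^A) - 24 \;\geq\; A/2 - 25 \;>\; 48C,
\end{align*}
which contradicts the standard estimate $|F(g)| \leq 4 C \cdot D(F) \leq 48 C$ that would hold if $g$ were a product of $C$ commutators. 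The only substantive inputs beyond arithmetic are the vanishing of $F$ on $h$ and on $f^B$; the first is precisely the previous lemma, and the second is forced by the Brooks construction. Accordingly I do not anticipate any new obstacle in this lemma beyond what has already been addressed upstream; the mild novelty is simply that $N_0$ now depends on $f$, because $\tau_f$ does.
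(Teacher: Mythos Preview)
Your proposal is correct and follows exactly the approach the paper intends: the paper itself simply states that one ``can also follow the exact same proof for Lemma \ref{contradiction} to get a version in this setting,'' and you have carried out precisely that transcription, with the correct substitutions ($f$ for $T_\gamma$, the quasi-tree $Q$ for $\Cc_\gamma(\Yb)$, and the two preceding lemmas for Lemmas \ref{WWPDtwist} and \ref{qmh}). Your bookkeeping with the constants is sound, and your remark that the dependence of $N_0$ on $f$ enters solely through the translation length $\tau_f$ is exactly the point the paper emphasizes after stating the lemma.
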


With all of these pieces the proof of Theorem \ref{mainthm3} follows exactly as the proof of Theorem \ref{mainthm2}. Note that the constant $N_{0}$ is dependent on $f$ as an element of $\PMCG(B)$. This is the reason that we take higher and higher powers of the \emph{same} pseudo-Anosov in the statement of Theorem \ref{mainthm3}. The result should also hold for higher and higher powers of different pseudo-Anosovs provided that their translation lengths fall in a bounded range. 


\section{Divisible subgroup of $H_{1}(\overline{\PMCGc(S)};\Z)$}

We have seen that for any infinite-type surface, $S$, with more than one end, $H_{1}(\overline{\PMCGc(S)};\Z)$ is nontrivial. Next we will use our main theorem to find a subgroup isomorphic to $\oplus_{2^{\aleph_{0}}}\Q$ within the abelianization.

\begin{definition} 
    An element $g$ of a group $G$ is said to be \textbf{divisible by $n$} if the equation $g = x^{n}$ has a solution in $G$. We say that $g$ is \textbf{divisible} if it is divisible by $n$ for all $n\in\N$. An abelian group is called \textbf{divisible} if every element is divisible.
\end{definition}

We first find a divisible element in the abelianization, then we construct uncountably many independent elements, and finally we combine these two constructions to prove Theorem \ref{divsub}. We adopt the notation that an over-bar represents the image of a mapping class in $H_{1}(\PMCGcc{S};\Z)$. 


\subsection{Constructing divisible elements}

We will follow the construction of Bogopolski and Zastrow for infinitely-divisible elements in the first homology of the Hawaiian Earring and Griffiths' space as seen in \cite{BZ2012}. We will need a slight modification when $S$ has genus less than $3$.

\subsubsection{$S$ has genus at least 3} \label{infdiv1}

We first consider the case that $S$ is an infinite-type surface of genus at least $3$ and with more than one end. 

Let $\{\gamma_{i}\}_{i\in\N}$ and $\{\gamma_{i}'\}_{i\in\N}$ be as in the statement of Theorem \ref{mainthm2}. 
Let 
\begin{align*}
    f = \prod_{j=1}^{\infty}T_{\gamma_{j}}^{j!}.
\end{align*}
Note that $f$ satisfies the hypotheses of Theorem \ref{mainthm2} so that $\bar{f}$ is nontrivial in $H_{1}(\overline{\PMCGc(S)};\Z)$. 

Since $S$ has genus at least $3$, each individual $T_{\gamma_{i}}$ can be thought of as a Dehn twist on a finite type surface of genus at least $3$. We can then apply the theorems of Powell and Harer to see that each $T_{\gamma_{i}}$ can be written as a product of commutators in $\PMCGc(S)$. Therefore, if we delete finitely many of the $T_{\gamma_{i}}$ from $f$, the resulting equivalence class in $H_{1}(\overline{\PMCGc(S)};\Z)$ will be unchanged. By deleting all of the occurrences of $T_{\gamma_{1}}$ in $\bar{f}$ we see that we can write $\bar{f}$ as a square in $H_{1}(\overline{\PMCGc(S)};\Z)$. Indeed, $\bar{f}=\bar{f'}\bar{f'}$ where $f'$ is given by
\begin{align*}
    f' = \prod_{k=2}^{\infty}T_{\gamma_{k}}^{\frac{k!}{2}}.
\end{align*}
We also verify that $\bar{f'}$ is nontrivial in $H_{1}(\overline{\PMCGc(S)};\Z)$ by Theorem \ref{mainthm2}. 

Similarly, for all $n \in \N$, by deleting all occurrences of $T_{\gamma_{1}},\ldots,T_{\gamma_{n}}$ from $\bar{f}$ we see that $\bar{f}$ is an $(n+1)$-th power in $H_{1}(\overline{\PMCGc(S)};\Z)$. Thus we see that $\bar{f}$ is divisible in $H_{1}(\overline{\PMCGc(S)};\Z)$.

\subsubsection{General case} \label{infdiv2}

If $S$ has genus less than $3$ we can no longer simply use Dehn twists because we no longer get for free that they can be written as a product of commutators. Instead we will run the same construction using a pseudo-Anosov on a punctured sphere that we can write as a commutator. Here we will need to make use of Theorem \ref{mainthm3}. 

\begin{remark}
    We note that this proof works in all cases regardless of the genus of the surface with a slight modification. The slight modification pertains to the use of six-times punctured spheres in what follows. If $S$ does not contain an infinite collection of end-separating six-times punctured spheres we could replace them with some other collection of homeomorphic subsurfaces of sufficiently large complexity. We have included the previous subsection because the proof is simpler in the case of Dehn twists and gives intuition for the following. 
\end{remark}

Suppose that $h$ is a pseudo-Anosov on a six-times punctured sphere (or similarly a sphere with six boundary components) that can be written as a product of commutators. Now since $S$ is infinite-type we can follow the same steps as at the end of Section 6 to find a collection $\mathcal{B} = \{B_{i}\}_{i \in \N}$ where each $B_{i}$ is end-separating and homeomorphic to a sphere with six boundary components. Let $h_{i}\in \PMCGc(S)$ be the mapping class that is $h$ on $B_{i}$ and the identity elsewhere. We can now apply the same exact construction as in the previous section with $h_{i}$ instead of $T_{\gamma_{i}}$ to get a divisible element. We apply Theorem \ref{mainthm3} to see that it is nontrivial in $H_{1}(\PMCGcc{S};\Z)$. Note that we used a six-times punctured sphere since Proposition \ref{pAWPD} required a surface with sufficiently large complexity. 

Now we just need to find a pseudo-Anosov on a six-times punctured sphere that can be written as a commutator. We will obtain such a pseudo-Anosov from the following lemma that is an application of Thurston's construction from \cite{Thurston1988} as stated in \cite{FM11}.

\begin{lemma} \label{thurstonconst}
    Suppose $\alpha$ and $\beta$ are curves that fill a finite-type surface $S$. Then the element $T_{\alpha}^{2}T_{\beta}^{2}T_{\alpha}^{-2}T_{\beta}^{-2}$ is a pseudo-Anosov in $\MCG(S)$. 
\end{lemma}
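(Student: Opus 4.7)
The plan is to invoke Thurston's construction of pseudo-Anosovs directly. Since $\alpha$ and $\beta$ fill $S$, Thurston's theorem (see, e.g., Theorem 14.1 of Farb--Margalit) produces a representation $\rho : \langle T_\alpha, T_\beta \rangle \to \mathrm{PSL}(2,\mathbb{R})$ given by
\[
\rho(T_\alpha) = \begin{pmatrix} 1 & n \\ 0 & 1 \end{pmatrix}, \qquad \rho(T_\beta) = \begin{pmatrix} 1 & 0 \\ -n & 1 \end{pmatrix},
\]
where $n = i(\alpha,\beta) \geq 1$ (filling forces the curves to intersect). The key property I will use is that a word $w$ in the generators $T_\alpha^{\pm 1}, T_\beta^{\pm 1}$ represents a pseudo-Anosov class if and only if $\rho(w)$ is hyperbolic, i.e., $|\mathrm{tr}(\rho(w))| > 2$.

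So it suffices to compute $\mathrm{tr}(\rho(T_\alpha^2 T_\beta^2 T_\alpha^{-2} T_\beta^{-2}))$ and check that it exceeds $2$. Writing $k = 2n$ to lighten notation, I would multiply the four matrices $\rho(T_\alpha^{\pm 2})$ and $\rho(T_\beta^{\pm 2})$ in order and obtain
\[
\rho(T_\alpha^2 T_\beta^2 T_\alpha^{-2} T_\beta^{-2}) = \begin{pmatrix} 1 - k^2 + k^4 & k^3 \\ k^3 & 1 + k^2 \end{pmatrix},
\]
whose trace is $k^4 + 2 = 16 n^4 + 2 \geq 18$ since $n \geq 1$. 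Hence $\rho$ sends the element to a hyperbolic matrix, and the pseudo-Anosov criterion applies.

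There is essentially no substantive obstacle here: the deep content is packaged inside Thurston's construction, and the proof reduces to a short matrix computation together with the observation that filling curves have positive intersection number. The only sanity check worth noting is that $\rho$ is correctly normalized in the two-curve case, which follows from the fact that the Perron--Frobenius eigenvalue of the $1 \times 1$ intersection matrix $[n]$ is simply $n$, so no additional scaling factor intervenes.
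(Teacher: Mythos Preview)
Your proof is correct and follows essentially the same approach as the paper: both invoke Thurston's construction to obtain a representation into $\mathrm{PSL}(2,\mathbb{R})$ and then verify that the image of the commutator has trace of absolute value greater than $2$. You carry out the matrix computation explicitly (arriving at trace $16n^{4}+2$), whereas the paper simply asserts that filling curves intersect at least once and hence the trace exceeds $2$; the sign conventions differ, but the two representations are conjugate and the argument is the same.
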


\begin{proof}
    Thurston's construction gives that there is a representation $\rho:\la T_{\alpha},T_{\beta} \ra \rightarrow \PSL(2,\R)$ given by 
    \begin{align*}
        T_{\alpha} \rightarrow 
            \begin{pmatrix} 1 & -i(\alpha,\beta) \\ 0 & 1 \end{pmatrix}
        \;\;\;\;\;
        T_{\beta} \rightarrow 
            \begin{pmatrix} 1 & 0 \\ i(\alpha,\beta) & 1 \end{pmatrix}.
    \end{align*}
    Furthermore, this representation has the property that $f \in \la T_{\alpha},T_{\beta} \ra$ is periodic, reducible, or pseudo-Anosov if and only if $\rho(f)$ is elliptic, parabolic, or hyperbolic, respectively. Finally, we note that two filling curves intersect at least once so that the element $\rho(T_{\alpha}^{2}T_{\beta}^{2}T_{\alpha}^{-2}T_{\beta}^{-2})$ has trace in absolute value greater than 2. 
\end{proof}

This lemma allows us to obtain our desired pseudo-Anosov by taking two curves that fill the six-times punctured sphere.


\subsection{Uncountably many independent elements} \label{sec:uncountably}

Let $S$ be any infinite-type surface with more than one end (of any genus). We will apply a trick used in \cite{RS2007} and \cite{Mann2020}. For each $a \in \R$ let $\Lambda_{a}$ be an infinite subset of $\N$ such that $\Lambda_{a} \cap \Lambda_{b}$ is finite for all $a \neq b$. We can obtain $\Lambda_{a}$ by putting $\N$ in bijection with $\Q$ and then letting $\Lambda_{a}$ be a sequence of rational numbers approximating $a$. 

Once again let $\{\gamma_{i}\}_{i\in\N}$ and $\{\gamma_{i}'\}_{i\in\N}$ be as in the statement of Theorem \ref{mainthm2}. For $a \in \R$ enumerate elements of $\Lambda_{a}$ as $\{a_{i}\}_{i \in \N}$ and let
\begin{align*}
    f_{a} \defeq \prod_{i = 1}^{\infty} T_{\gamma_{a_{i}}}^{i} \in \overline{\PMCGc(S)}.
\end{align*}
By Theorem \ref{mainthm2} $\bar{f_{a}}$ is nontrivial in $H_{1}(\overline{\PMCGc(S)};\Z)$ for all $a\in\R$. Note also that since $\Lambda_{a}\cap\Lambda_{b}$ is finite for any $a \neq b$, any finite product of such $f_{a}$ also satisfies the hypotheses of Theorem \ref{mainthm2} so that any finite product is also nontrivial in $H_{1}(\overline{\PMCGc(S)};\Z)$. Thus we have the following proposition.

\begin{proposition}
    Let $S$ by any infinite-type surface with more than one end. Then $H_{1}(\overline{\PMCGc(S)};\Z)$ contains an uncountable collection of independent elements. 
\end{proposition}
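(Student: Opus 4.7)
The plan is to show that the family $\{\bar{f_a}\}_{a\in\R}$ already introduced above is $\Z$-linearly independent in $H_1(\overline{\PMCGc(S)};\Z)$, which produces an uncountable collection of independent elements. So I would assume for contradiction that there is a finite $\Z$-relation
\[
\sum_{j=1}^{k} n_j \bar{f_{a_j}} = 0
\]
with distinct $a_1,\ldots,a_k\in\R$ and some $n_j\neq 0$. After relabeling, and possibly inverting by replacing $g$ with $g^{-1}$, I may assume $n_1>0$. It then suffices to prove that $g\defeq \prod_{j=1}^{k} f_{a_j}^{n_j}\in\overline{\PMCGc(S)}$ has nontrivial image in $H_1$.

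The key step is to reorganize $g$ curve-by-curve. Since the curves $\{\gamma_m\}_{m\in\N}$ in $\Gamma$ are pairwise disjoint, Dehn twists about distinct $\gamma_m$ commute, and modulo a mapping class supported on a single compact set (which can be absorbed into a correction term) $g$ is represented by the infinite product $\prod_{m} T_{\gamma_m}^{c_m}$, where
\[
c_m = \sum_{j:\, m\in\Lambda_{a_j}} \mathrm{pos}_{\Lambda_{a_j}}(m)\cdot n_j
\]
and $\mathrm{pos}_{\Lambda}(m)$ denotes the position of $m$ in the fixed enumeration of $\Lambda$. Using that $\Lambda_{a_1}\cap\Lambda_{a_j}$ is finite for each $j>1$, the subset $\Lambda' \defeq \Lambda_{a_1}\setminus\bigcup_{j>1}\Lambda_{a_j}$ is cofinite in $\Lambda_{a_1}$, hence still infinite. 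For $m\in\Lambda'$ we have $c_m = \mathrm{pos}_{\Lambda_{a_1}}(m)\cdot n_1$, which ranges over an unbounded sequence of positive integers.

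Enumerating $\Lambda' = \{m_1 < m_2 < \cdots\}$, I would set $\Gamma''=\{\gamma_{m_k}\}_{k\in\N}$, $\Gamma''' = \{\gamma_{m_k}'\}_{k\in\N}$, and $A''=\{c_{m_k}\}_{k\in\N}$. The triple $(\Gamma'',\Gamma''',A'')$ inherits from $(\Gamma,\Gamma',\cdot)$ all the hypotheses of Theorem~\ref{mainthm2}: the curves are disjoint end-separating curves that leave every compact set, the translates $\gamma_{m_k}'$ are compactly-supported translates of $\gamma_{m_k}$ with the required disjointness from the other $\gamma_{m_\ell}$, and $A''$ is unbounded. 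Writing $g = \phi\cdot f_{\Gamma'',A''}$, where $\phi$ collects the remaining Dehn twists $T_{\gamma_m}^{c_m}$ for $m\notin\Lambda'$, the disjointness of all the curves in $\Gamma$ together with the fact that each $\gamma_m'$ meets no $\gamma_j$ for $j\neq m$ shows that $\phi$ fixes $\gamma_{m_k}$ and $\gamma_{m_k}'$ for every $k$, i.e.\ for infinitely many $k$. Thus Theorem~\ref{mainthm2} applies and yields that $\bar g$ is nontrivial in $H_1(\overline{\PMCGc(S)};\Z)$, contradicting the supposed relation.

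The main obstacle I expect is the careful justification that $g$ really can be rewritten as $\phi\cdot f_{\Gamma'',A''}$ in $\overline{\PMCGc(S)}$. The $k$ infinite products defining the $f_{a_j}$ converge in this closure because the supports of the individual twists exit every compact set, and all but finitely many pairs of twists appearing in $g$ have disjoint supports and therefore commute on the nose. Reorganizing those commuting factors into an infinite product over $\Lambda'$ and a complementary piece $\phi$ only alters $g$ by a compactly supported mapping class arising from reshuffling the finitely many twists whose support curves are not pairwise disjoint (i.e.\ come from the finite pairwise intersections of the $\Lambda_{a_j}$); that piece is absorbed into $\phi$. Once this bookkeeping is handled, the rest is a direct appeal to Theorem~\ref{mainthm2}.
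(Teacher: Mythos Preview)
Your proposal is correct and follows essentially the same approach as the paper: both arguments observe that any nontrivial finite $\Z$-combination of the $f_a$ can be reorganized into a product of Dehn twists about disjoint end-separating curves with unbounded exponents, so Theorem~\ref{mainthm2} applies. Your version is simply more explicit---in particular you invoke the $\phi$-clause to handle mixed signs---though note that your worry about a compactly supported correction from reshuffling is unnecessary, since all the curves $\gamma_m$ are pairwise disjoint and hence the Dehn twists already commute exactly.
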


Note that we could have applied this same technique to products of powers of pseudo-Anosovs on subsurfaces as in Theorem \ref{mainthm3}. 


\subsection{Proof of Theorem \ref{divsub}} \label{proofofC}

We can now modify the construction of a divisible element to find uncountably many independent divisible elements. For $a \in \R$, let $\Lambda_{a}$ be as above. In the genus at least $3$ case, for each $a\in\R$ we construct $f_{a}$ as in Section \ref{infdiv1} except by using only twists in $\Lambda_{a}$. That is, $f_{a}$ is defined as:
\begin{align*}
    f_{a} = \prod_{j=1}^{\infty}T_{\gamma_{a_{j}}}^{j!}
\end{align*}

In the genus less than three case we do the same construction but using pseudo-Anosovs as in Section \ref{infdiv2}.

In both cases this gives an uncountable collection of independent divisible elements $\{\bar{f_{a}}\}_{a\in\R}$ in $H_{1}(\overline{\PMCGc(S)};\Z)$. Let $A$ be the minimal divisible subgroup containing $\{\bar{f_{a}}\}_{a\in\R}$. We can now apply the Structure Theorem of Divisible Groups. First we recall that a \textbf{quasicyclic group} is a group isomorphic to the group of $p^{n}$th complex roots of unity for all $n$ and for some prime $p$. Note that these groups are all torsion. 

\begin{theorem} [\cite{Fuchs1970}, Theorem 23.1] \label{divstructure}
    Any divisible group $D$ is a direct sum of quasicyclic and full rational groups. The cardinal numbers of the sets of quasicyclic components and $\Q$'s form a complete and independent system of invariants for $D$.
\end{theorem}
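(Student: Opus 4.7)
My plan is to prove the Structure Theorem for divisible abelian groups by first identifying divisibility with injectivity in the category of abelian groups, then splitting off the torsion subgroup, and finally decomposing the torsion and torsion-free parts by elementary means.

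The central observation is that an abelian group $D$ is divisible if and only if it is injective, which immediately implies that any divisible subgroup of any abelian group is a direct summand. To prove the forward direction, given an inclusion $A \hookrightarrow B$ and a homomorphism $f \colon A \to D$, I would apply Zorn's lemma to the poset of partial extensions to get a maximal $f' \colon A' \to D$. If $A' \neq B$, pick $b \in B \setminus A'$; either $nb \notin A'$ for all $n>0$ (in which case extend by sending $b$ to $0$), or $nb \in A'$ for some minimal $n > 0$, and divisibility of $D$ supplies an $n$th root of $f'(nb)$, contradicting maximality.

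Next I would show the torsion subgroup $T(D)$ is itself divisible: if $nx = 0$ and we want $y$ with $my = x$, divisibility gives $y \in D$, and $(mn)y = nx = 0$, so $y$ is torsion. Injectivity of $T(D)$ then yields $D = T(D) \oplus D'$ with $D'$ torsion-free divisible. A torsion-free divisible group is uniquely a $\Q$-vector space (division by each nonzero integer is unambiguous), so $D' \cong \bigoplus_{I} \Q$ for $|I| = \dim_\Q D'$. The torsion part decomposes as $T(D) = \bigoplus_p T_p(D)$ into $p$-primary components, each divisible. For each prime $p$, I would show that a divisible $p$-group is a direct sum of quasicyclic groups $\mathbb{Z}(p^\infty)$: pick an $\mathbb{F}_p$-basis of the socle $T_p(D)[p] = \{x : px = 0\}$, then use divisibility inductively to lift each basis element to a coherent sequence of $p^k$th roots, and let $Q$ be the direct sum of the resulting Pr\"ufer subgroups.

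Uniqueness of the cardinal invariants will follow from the identifications: the multiplicity of $\Q$ equals $\dim_\Q(D/T(D))$ and the multiplicity of $\mathbb{Z}(p^\infty)$ equals $\dim_{\mathbb{F}_p}(D[p])$, both intrinsic to $D$. The main obstacle will be verifying that the direct sum $Q$ of Pr\"ufer subgroups actually exhausts $T_p(D)$. The cleanest approach: $Q$ is itself divisible, hence splits off by injectivity as $T_p(D) = Q \oplus R$; if $R$ were nonzero, some nonzero element of $R$ would have a multiple of order exactly $p$, landing in $R[p] \subseteq T_p(D)[p]$, and this would contradict the maximality of our chosen socle basis.
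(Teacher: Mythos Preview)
The paper does not prove this theorem; it is cited from Fuchs's \emph{Infinite Abelian Groups} and used as a black box, so there is no argument in the paper to compare yours against. Your proof is the standard one and is correct. The one step that deserves an extra line is the assertion that the Pr\"ufer subgroups $Q_i$ you build over a socle basis $\{e_i\}$ genuinely form a \emph{direct} sum: given a shortest nontrivial relation $\sum_j y_j = 0$ with $0 \neq y_j \in Q_{i_j}$, multiply through by $p^{k-1}$, where $p^{k}$ is the largest order among the $y_j$, to obtain a nontrivial $\mathbb{F}_p$-relation among the $e_{i_j}$, a contradiction. With that in hand, your closing argument---split $T_p(D) = Q \oplus R$ by injectivity of $Q$, and note that any nonzero $r \in R[p]$ would lie in $T_p(D)[p]$ yet outside $\operatorname{span}\{e_i\} \subseteq Q$, contradicting maximality of the basis---is sound, as is the identification of the invariants with $\dim_{\Q}(D/T(D))$ and $\dim_{\mathbb{F}_p} D[p]$.
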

Every element in the collection $\{\bar{f_{a}}\}_{a\in\R}$ is torsion free since any power is non-trivial in the abelianization by Theorem \ref{mainthm2}. Therefore we see that $A$ has uncountably many torsion-free elements and so must contain a subgroup isomorphic to $\oplus_{2^{\aleph_{0}}}\Q$. Next we use the following.

\begin{theorem} [\cite{Fuchs1970}, Theorem 21.3] \label{abeliandecomp}
    Every abelian group $A$ is the direct sum $A = D \oplus C$ where $D$ is divisible and $C$ has no divisible subgroups other than the identity. 
\end{theorem}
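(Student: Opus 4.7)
The plan is to take $D$ to be the maximal divisible subgroup of $A$ and prove that $D$ always splits off as a direct summand, with any complement automatically having no nontrivial divisible subgroup. The statement is internal to abelian group theory and has no interaction with the mapping class group machinery developed earlier; it is essentially the content of Fuchs' Theorem 21.3.

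First I would construct $D$. The key observation is that the class of divisible subgroups of $A$ is closed under arbitrary sums: if $\{D_{i}\}_{i\in I}$ is any family of divisible subgroups and $x = d_{i_{1}} + \cdots + d_{i_{k}}$ lies in $\sum_{i} D_{i}$, then for any positive integer $n$ we may write each $d_{i_{j}} = n e_{i_{j}}$ with $e_{i_{j}} \in D_{i_{j}}$, so $x = n(e_{i_{1}} + \cdots + e_{i_{k}})$. Consequently the sum $D$ of all divisible subgroups of $A$ is itself a divisible subgroup, and by construction it is the unique maximal such subgroup.

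The central step is to show that this $D$ is a direct summand of $A$. The cleanest tool is the equivalence, over the PID $\Z$, between divisibility and injectivity as a $\Z$-module, a consequence of Baer's criterion: a $\Z$-module $D$ is injective iff for every ideal $I = n\Z$ every homomorphism $I \to D$ extends to $\Z \to D$, and this extension condition is exactly divisibility of $D$. Granting injectivity, the identity map $\operatorname{id}_{D}: D \to D$ extends along the inclusion $\iota: D \hookrightarrow A$ to a retraction $r: A \to D$ with $r \circ \iota = \operatorname{id}_{D}$. Setting $C = \ker(r)$ gives $A = D \oplus C$. Finally, any divisible subgroup $E$ of $C$ is also a divisible subgroup of $A$, hence $E \subseteq D$ by maximality of $D$; combined with $E \subseteq C$ and $D \cap C = 0$ this forces $E = 0$, so $C$ has no nontrivial divisible subgroups.

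The main obstacle is the splitting itself, and if one wished to avoid invoking injective modules one could instead proceed by Zorn's lemma directly inside $A$: choose a maximal subgroup $C \leq A$ with $C \cap D = 0$ and use divisibility of $D$ to conclude $D + C = A$, arguing that otherwise an element $a \notin D + C$ would generate, by maximality of $C$, a relation $na \in D + C$ for some $n > 0$, after which dividing the $D$-component inside the divisible group $D$ yields $a \in D + C$, a contradiction. Either route delivers the desired decomposition.
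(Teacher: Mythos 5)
Your argument is correct, but there is nothing in the paper to compare it against: the paper does not prove this statement, it simply imports it as Theorem 21.3 of Fuchs and immediately applies it in the proof of Theorem \ref{divsub}. Your main route is the standard one and is complete: the sum of all divisible subgroups is divisible (your computation with $x = d_{i_1} + \cdots + d_{i_k}$ is exactly right), divisible $=$ injective over $\Z$ by Baer's criterion since every ideal of $\Z$ is principal, injectivity extends $\operatorname{id}_D$ along $D \hookrightarrow A$ to a retraction $r$, and $C = \ker(r)$ is then reduced because any divisible $E \leq C$ lands in $D \cap C = 0$ by maximality of $D$. One caveat on your Zorn's lemma alternative: the step ``dividing the $D$-component inside the divisible group $D$ yields $a \in D + C$'' is too quick. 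If $na = d + c$ with $n$ minimal and $d = nd'$, you only obtain $n(a - d') = c \in C$, which does not by itself put $a - d'$ in $C$; you need a second appeal to the maximality of $C$ applied to $b = a - d'$ (producing $0 \neq c_1 + kb \in D$, deducing $n \mid k$, hence $kb \in C$ and a contradiction with $C \cap D = 0$). That route does work, but as sketched it has a gap; since you offer it only as an optional variant and your primary injective-module argument is airtight, the proof as a whole stands.
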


\begin{proof}[Proof of Theorem \ref{divsub}]
    Write $H_{1}(\PMCGcc{S};\Z) = D \oplus C$ where $D$ is divisible and $C$ as in Theorem \ref{abeliandecomp}. Next we can further decompose $D = Q \oplus T$ where $Q$ is a direct sum of $\Q$'s and $T$ is torsion. By the above discussion and Theorem \ref{divstructure} we have $Q = \oplus_{2^{\aleph_{0}}}\Q$. Note that $Q$ cannot be any larger since the cardinality of $\PMCGcc{S}$ is $2^{\aleph_{0}}$. Letting $B = T \oplus C$ finishes the proof. 
\end{proof}


\section{Torelli group} \label{sec:torelli}

The \textbf{Torelli group}, $\cI(S)$, is the kernel of the natural homomorphism $\MCG(S) \rightarrow \Aut(H_{1}(S;\Z))$. The Torelli group has been widely studied in the finite-type case. In particular, Johnson in \cite{Johnson1985} explicitly computed the abelianization of the Torelli group when $S$ is a finite-type surface of genus at least 3 and with 1 boundary component. We will see that all of our arguments in the previous sections can be carried out in the Torelli group to obtain the same results. 

\begin{theorem} \label{torelli2}
    Let $S$ be an infinite-type surface. Then, $H_{1}(\cI(S);\Z) = \oplus_{2^{\aleph_{0}}}\Q \oplus B$ where all divisible subgroups of $B$ are torsion.
\end{theorem}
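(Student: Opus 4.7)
The plan is to mimic the proof of Theorem \ref{divsub} entirely inside $\cI(S)$. The first step is to observe that $\cI(S) \subseteq \PMCGcc{S}$: in the decomposition $\PMCG(S) = \PMCGcc{S} \rtimes H$ of Theorem \ref{structure}, any element with a nontrivial handle-shift component acts on $H_{1}(S;\Z)$ as an infinite shift of the homology generators carried by the handles, and no compactly supported mapping class can invert such a shift on infinitely many generators. This inclusion lets me restrict each quasimorphism built in Section 5 to $\cI(S)$ without loss of defect, and the WWPD setup of Lemma \ref{WWPDtwist} descends to the restricted action of $\cI(S)$ on the relevant projection complexes.

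Next I use the pseudo-Anosov construction of Section 8.1.2 uniformly in the genus of $S$, so that the divisible elements lie visibly in the Torelli group. Fix a collection $\{B_{i}\}_{i\in\N}$ of pairwise disjoint end-separating six-holed spheres in $S$, obtained from a separating principal exhaustion. Each component of $\partial B_{i}$ is end-separating and therefore separating in $S$, so $\partial B_{i}$ is null-homologous in $S$; since $H_{1}(B_{i};\Z)$ is generated by $\partial B_{i}$, every mapping class supported on $B_{i}$ lies in $\cI(S)$. In particular, Lemma \ref{thurstonconst} applied to two filling curves $\alpha_{i}, \beta_{i} \subset B_{i}$ produces a pseudo-Anosov $h_{i} = [T_{\alpha_{i}}^{2}, T_{\beta_{i}}^{2}]$ that is \emph{literally} a commutator in $\cI(S)$.

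I then replay the arguments of Sections 8.2 and 8.3 with $h_{i}$ in place of $T_{\gamma_{i}}$. For each $a\in\R$ with associated $\Lambda_{a} = \{a_{j}\}_{j\in\N}$, define $f_{a} = \prod_{j=1}^{\infty} h_{a_{j}}^{j!} \in \cI(S)$. The restricted quasimorphisms, combined with Theorem \ref{mainthm3}, show that each $\overline{f_{a}}$ is nonzero and that the family $\{\overline{f_{a}}\}_{a\in\R}$ is independent in $H_{1}(\cI(S);\Z)$. Divisibility of each $\overline{f_{a}}$ follows by deleting the first $n$ factors (each a power of a commutator in $\cI(S)$, hence trivial in $H_{1}$) and noting that the remaining tail is manifestly an $(n+1)$-th power. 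Applying Theorems \ref{divstructure} and \ref{abeliandecomp} to the divisible subgroup generated by $\{\overline{f_{a}}\}$ yields the stated decomposition with the complementary piece $B$ having only torsion divisible subgroups.

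The principal obstacle is the one-ended case, where no end-separating curves or subsurfaces exist. I anticipate handling this exactly as the appendix handles Theorem \ref{mainthm}: introduce a marked point $p\in S$ so that curves in $S\setminus\{p\}$ separating $p$ from the unique end become end-separating, construct the divisible family inside $\cI(S\setminus\{p\})$ using the argument above, and transport it back to $\cI(S)$ via the Birman exact sequence. The key subtlety is verifying that this transport preserves both membership in the Torelli group and nontriviality in the abelianization, the latter being detected by the quasimorphisms constructed upstairs.
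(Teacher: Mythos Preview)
Your proposal is correct and follows essentially the same route as the paper: show $\cI(S)\subseteq\PMCGcc{S}$, build the divisible elements via the pseudo-Anosov commutator of Lemma~\ref{thurstonconst} on end-separating six-holed spheres (so that both the pseudo-Anosov and the twists in the commutator lie in $\cI(S)$), invoke Theorem~\ref{mainthm3} for nontriviality, and then run the abelian-group argument of Section~\ref{proofofC}; the one-ended case is deferred to the Birman exact sequence argument exactly as in the appendix. The only cosmetic difference is that the paper first cites Theorem~\ref{mainthm2} for nontriviality (separating twists being in $\cI(S)$) before passing to the pseudo-Anosov construction for divisibility, whereas you use Theorem~\ref{mainthm3} throughout; and the paper leaves the containment $\cI(S)\subseteq\PMCGcc{S}$ implicit (cf.\ the remark after the proof), while you spell it out.
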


The case of the infinite-type surface with one end is also handled via a Birman Exact Sequence argument in the appendix. 

In \cite{AGKMTW2019} the authors found a topological generating set for $\cI(S)$ when $S$ is infinite type. 

\begin{theorem} [\cite{AGKMTW2019}, Corollary 2]
    Let $S$ be a connected oriented surface of infinite type. Then $\cI(S)$ is topologically generated by separating twists and bounding-pair maps. 
\end{theorem}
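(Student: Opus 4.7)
The plan is to combine an approximation argument with the classical finite-type generation result for Torelli groups due to Birman--Powell--Johnson (extended by Putman to cover surfaces with boundary and low genus). Let $\mathcal{T} \le \cI(S)$ be the subgroup generated by all separating twists and bounding pair maps of $S$; the goal is to show that $\overline{\mathcal{T}} = \cI(S)$, i.e.\ every $f \in \cI(S)$ arises as a limit, in the compact-open topology, of elements of $\mathcal{T}$.

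First I would fix a separating principal exhaustion $\{S_i\}$ of $S$ chosen so that each boundary curve of every $S_i$ is essential and end-separating in $S$, so that the corresponding boundary Dehn twists lie automatically in $\mathcal{T}$. Given a representative homeomorphism $\phi$ of a mapping class $f \in \cI(S)$, I would construct compactly supported homeomorphisms $\phi_i$ that agree with $\phi$ on $S_i$ and equal the identity outside a slightly enlarged subsurface $S_i^+$, interpolating across the collar $S_i^+ \setminus S_i$. The resulting mapping classes $[\phi_i] \in \PMCGc(S)$ converge to $f$ in $\MCG(S)$ by definition of the topology.

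The main step, and the principal obstacle, is to upgrade each $[\phi_i]$ to an element of $\cI(S)$. The naive cut-off can introduce a homological defect supported in the collar $S_i^+ \setminus S_i$: although $\phi$ acts trivially on $H_1(S;\Z)$, the interpolating extension $\phi_i$ need not. Because $\phi_i = \phi$ on $S_i$ and is the identity outside $S_i^+$, the defect of $[\phi_i]$ on $H_1(S;\Z)$ is measured entirely by how the boundary components of $S_i$ are twisted and paired. By composing $[\phi_i]$ with a suitable product of boundary Dehn twists about components of $\partial S_i$ and bounding pair maps between such components (both of which lie in $\mathcal{T}$ by our choice of exhaustion), one obtains $f_i \in \cI(S) \cap \PMCGc(S)$ still converging to $f$, whose difference from $[\phi_i]$ lies in $\mathcal{T}$.

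Finally, each $f_i$ is supported on a finite-type end-separating subsurface $\Sigma_i$, and I would view it as an element of the finite-type Torelli group $\cI(\Sigma_i)$ relative to $\partial \Sigma_i$. By the Birman--Powell--Johnson theorem (and Putman's extension to surfaces with boundary and to the low-genus cases that arise when $S$ has small genus), $\cI(\Sigma_i)$ is generated by separating twists and bounding pair maps of $\Sigma_i$; each of these is simultaneously a separating twist or bounding pair map of $S$ because $\Sigma_i$ is end-separating. Hence $f_i \in \mathcal{T}$ and $f \in \overline{\mathcal{T}}$. The hardest part is the correction step: one must identify explicitly the collar twists and bounding pair maps needed to kill the homological defect, and ensure that the exhaustion is chosen so these corrections are themselves realized by elements of $\mathcal{T}$ supported in $S_i^+$.
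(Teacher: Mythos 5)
The paper does not prove this statement; it is imported verbatim from \cite{AGKMTW2019}, so there is no internal proof to compare against. Your overall architecture --- exhaust $S$ by finite-type end-separating subsurfaces, truncate a representative of $f$, repair the homological defect, and invoke the Birman--Powell--Johnson--Putman generation theorem on each piece --- is the right shape and is close in spirit to the argument in that reference. But as written there are two genuine gaps.

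The concrete one is the correction step. Every boundary component of $S_i$ in your exhaustion is end-separating, hence separating, hence null-homologous in $S$; consequently every Dehn twist about a component of $\partial S_i$, and every map $T_{\delta_1}T_{\delta_2}^{-1}$ with $\delta_1,\delta_2\subset\partial S_i$, acts \emph{trivially} on $H_1(S;\Z)$. Such elements can never cancel a nontrivial homological defect. And the defect of a truncation is nontrivial in general and is not ``supported in the collar'': if $f=T_aT_b^{-1}$ is a bounding-pair map with $a\subset S_i$ and $b$ outside $S_i^{+}$, your $\phi_i$ is just $T_a$, whose action on $H_1(S;\Z)$ is a transvection by the nonseparating class $[a]$. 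Killing it requires pairing each orphaned twist with a twist about a curve \emph{homologous} to $a$ and placed in the collar (so that the correction both restores homological triviality and leaves every compact set as $i\to\infty$), and one must argue such curves exist --- $[a]$ need not lie in the image of $H_1$ of the collar. This is where the real work of the theorem lives.

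Secondly, the claim that $[\phi_i]\to f$ ``by definition of the topology'' presupposes the interpolating homeomorphism exists, i.e.\ that $\phi(S_i)$ can be isotoped back to $S_i$ compatibly with the partition of the complement. This fails for general elements of $\PMCG(S)$ --- a handle shift moves genus across $\partial S_i$ --- and is essentially the nontrivial assertion $\cI(S)\leq\overline{\PMCGc(S)}$, which must itself be derived from the Torelli hypothesis (for instance by noting that $f_*$ preserves the image of $H_1(S_i;\Z)$ in $H_1(S;\Z)$ and hence the genus on each side of each separating boundary curve). Relatedly, on each $\Sigma_i$ you need the generation theorem for the Torelli group of a subsurface \emph{relative to its embedding} in $S$ (Putman's relative version), which does not coincide with the intrinsic Torelli group when $\Sigma_i$ has several boundary components.
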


We thus immediately see that mapping classes of the form used in Theorem \ref{mainthm2} are contained within $\cI(S)$. This gives us nontrivial elements in $H_{1}(\cI(S);\Z)$ when $S$ has more than one end. We have to be a little bit more careful when it comes to finding divisible elements in the abelianization. Just as in the low-genus case we no longer can be sure that individual Dehn twists are contained in $[\cI(S),\cI(S)]$. However, we can use the exact same arguments as in Section \ref{infdiv2} to apply Theorem \ref{mainthm3} to obtain divisible elements in $H_{1}(\cI(S);\Z)$. To do this we simply apply Lemma \ref{thurstonconst} to a pair of separating, filling curves. Theorem \ref{torelli2} now follows by the same argument as in Section \ref{proofofC}.
 
\begin{remark}
    This theorem did not depend on the fact that we were considering the Torelli group. The conclusion of Theorem \ref{torelli2} holds for any subgroups of $\PMCGcc{S}$ that contain sufficiently many elements of the form found in Theorems \ref{mainthm2} or \ref{mainthm3}.
\end{remark}

\subsection{Johnson homomorphism}

Johnson makes use of the Johnson and Birman-Craggs-Johnson homomorphisms to explicitly compute $H_{1}(\cI(S);\Z)$ in the finite-type setting. We can extend the Johnson homomorphism to the infinite-type setting and use it to find an indivisible copy of $\oplus_{2^{\aleph_{0}}}\Z$ in $H_{1}(\cI(S);\Z)$ provided that $S$ has infinite genus.

Let $S$ be an infinite-type surface, $H = H_{1}(S;\Z)$, and $a \in H$ a primitive element. Represent $a$ by an oriented multicurve $\mu$ on the surface $S$. Given $f \in \cI(S)$ and a representative homeomorphism $\phi$ of $f$ let $M_{\phi}$ be the mapping torus of $\phi$. The cylinder $C = \mu \times [0,1]$ maps into $M_{\phi}$. Since $f \in \cI(S)$ we have that $\phi(\mu)$ is homologous to $\mu$ so that there is an immersed surface in $S \times \{0\} \subset M_{\phi}$ that closes up the cylinder $C$ to a surface $S_{a}$ in $M_{\phi}$. Note that since $S$ has at least one end the choice of this surface is unique. $S_{a}$ gives rise to a homology class $[S_{a}] \in H_{2}(M_{\phi};\Z)$. By Poincar\`{e} duality this gives a class $[S_{a}] \in H^{1}_{c}(M_{\phi};\Z)$, the first cohomology with compact support of $M_{\phi}$.

Given a triple $a \wedge b \wedge c \in \wedge^{3} H$, the third exterior power of $H$, we get an element $[S_{a}] \smallsmile [S_{b}] \smallsmile [S_{c}] \in H^{3}_{c}(M_{\phi};\Z)$. Finally, we can pair this homology class with the fundamental class of $M_{\phi}$ in locally finite homology to obtain an element of $\Z$. This gives a homomorphism, the \textbf{Johnson homomorphism},

\begin{align*}
    \tau : \cI(S) \rightarrow \operatorname{Hom}(\wedge^{3}H, \Z).
\end{align*}

We refer the reader to \cite{Hatcher2002} and \cite{Geoghegan2007} for background on locally finite homology. 

Alternatively, $\tau(f)(a \wedge b \wedge c)$, can be thought of as the triple algebraic intersection $S_{a} \cap S_{b} \cap S_{c}$. Also, just as in the finite-type case the Johnson homomorphism satisfies a \textbf{naturality property}: for $f \in \cI(S)$, $h \in \MCG(S)$, and $a \wedge b \wedge c \in \wedge^{3}H$ we have 
\begin{align*}
    \tau(hfh^{-1})(a\wedge b \wedge c) = \tau(f)(h^{-1}_{*}(a) \wedge h^{-1}_{*}(b) \wedge h^{-1}_{*}(c)).
\end{align*}

Note that since $\operatorname{Hom}(\wedge^{3}H, \Z)$ is abelian, $\tau$ factors through the abelianization of $\cI(S)$ and so nontrivial elements in the image of $\tau$ give rise to nontrivial elements in $H_{1}(\cI(S);\Z)$. We next will examine the image of $\tau$ when applied to bounding-pair maps.

\begin{remark}
    Just as in the finite-type case it can be shown that $\tau$ is trivial on any separating twist. In fact, $\tau$ is trivial on any infinite product of separating twists, provided the infinite product actually defines a mapping class. This triviality follows by building and applying the resulting homomorphism to a \emph{geometric homology basis} as defined in \cite{FHV2019}. The Johnson homomorphism can thus be seen as capturing some new information about $H_{1}(\cI(S);\Z)$ not coming from the previous constructions in this paper. 
\end{remark}

\begin{lemma} \label{bpimage}
    Let $\{\alpha,\beta\}$ be a bounding pair. 
    \begin{enumerate}[(i)]
    \item
        $\tau(T_{\alpha}T_{\beta}^{-1})$ is non-zero. 
    \item
        Let $B$ be the subsurface of $S$ with boundary $\alpha \cup \beta$. If $a', b', c'$ is a triple of curves that do not intersect $B$, then $\tau(T_{\alpha}T_{\beta}^{-1})(a' \wedge b' \wedge c') =0$.
    \end{enumerate}
\end{lemma}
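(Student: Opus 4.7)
The plan is to compute $\tau(T_{\alpha}T_{\beta}^{-1})$ directly from the geometric definition as a triple intersection number in the mapping torus $M_{\phi}$, where $\phi$ is a representative supported in a compact neighborhood $N$ of $\alpha \cup \beta$ contained in $B$. The key structural feature is that outside $N$ the mapping torus is a trivial product $(S \setminus N) \times S^{1}$, so the surfaces $S_{a}$ take a simple form whenever $a$ is represented by a curve disjoint from $B$.

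For part (ii), take the curves $a', b', c'$ themselves as representatives (after a small isotopy to general position so that $a' \cap b' \cap c' = \emptyset$ in $S$). Since $\phi$ is the identity on each of them, the cylinder $a' \times [0,1]$ closes up in $M_{\phi}$ without any correction 2-chain, and similarly for $b'$ and $c'$. The resulting surfaces $S_{a'} = a' \times S^{1}$, $S_{b'} = b' \times S^{1}$, $S_{c'} = c' \times S^{1}$ all lie in the product region, and their triple intersection in $M_{\phi}$ is $(a' \cap b' \cap c') \times S^{1} = \emptyset$. Therefore $\tau(T_{\alpha}T_{\beta}^{-1})(a' \wedge b' \wedge c') = 0$.

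For part (i), exhibit an explicit triple on which $\tau$ is nonzero. A nontrivial bounding pair cobounds a subsurface $B$ of positive genus, so one can choose simple closed curves $e, f$ in the interior of $B$ with $e \cdot f = 1$, together with a simple closed curve $d$ meeting each of $\alpha$ and $\beta$ once (crossing through $B$) while disjoint from $e \cup f$. As in (ii), $S_{[e]}$ and $S_{[f]}$ are product tori meeting transversely in the single fiber circle $\{e \cap f\} \times S^{1}$, while $S_{[d]}$ is built from $d \times [0,1]$ together with a twist-band 2-chain $\Sigma$ in $S \times \{0\}$ supported in $B$ and satisfying $\partial \Sigma = \phi(d) - d$; a direct check shows $\Sigma$ crosses the fiber circle algebraically once, producing $\tau(T_{\alpha}T_{\beta}^{-1})([d] \wedge [e] \wedge [f]) = \pm 1$. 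The main technical obstacle is keeping orientations and the locally finite homology pairing on $M_{\phi}$ straight in the infinite-type setting; this can be sidestepped by invoking naturality of the Johnson homomorphism under inclusion of a finite-type subsurface $S' \supset B$ containing $d, e, f$, which reduces the calculation to the classical finite-type formula of Johnson \cite{Johnson1985}, well-known to be nonzero on bounding pair maps.
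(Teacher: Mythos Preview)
Your proof is correct and follows essentially the same approach as the paper: for (ii) you observe that curves disjoint from $B$ are fixed by the bounding-pair map so their associated surfaces are product tori with empty triple intersection, and for (i) you pick two curves $e,f$ in $B$ with $e\cdot f=1$ together with a curve $d$ crossing the pair, then read off the triple intersection as $\pm 1$. This is exactly the paper's argument, with your $(d,e,f)$ playing the role of the paper's $(c,a,b)$ and your twist-band $\Sigma$ being the surface $A$ cobounded by $c$ and $T_{\alpha}T_{\beta}^{-1}(c)$; the only cosmetic difference is that the paper computes directly in the infinite-type mapping torus rather than appealing to naturality under inclusion of a finite-type subsurface.
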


\begin{proof}
    \begin{figure}
	    \centering
	    \def\svgwidth{\columnwidth}
	        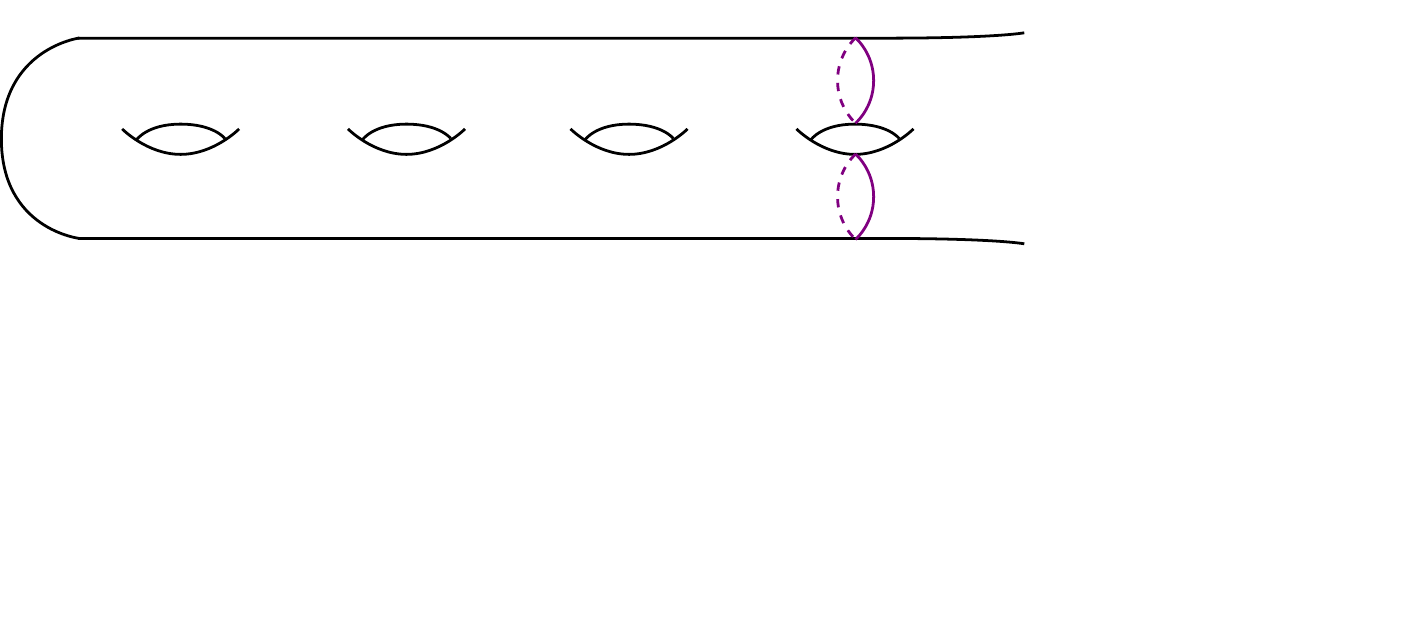
	        \caption{A generic bounding pair $\{\alpha,\beta\}$ and curves used to detect the non-triviality of $\tau(T_{\alpha}T_{\beta}^{-1})$.}
        \label{fig:johnsonbp}
    \end{figure}
    
    The change of coordinates principle and naturality property allows us to consider a standard bounding pair $\{\alpha,\beta\}$ as in Figure \ref{fig:johnsonbp}. Consider the triple of curves $a \wedge b \wedge c$ in Figure \ref{fig:johnsonbp}. Here $c$ and $T_{\alpha}T_{\beta}^{-1}(c)$ cobound the surface, A, on the left hand side of the figure. Also, $T_{\alpha}T_{\beta}^{-1}$ fixes $a$ and $b$ pointwise. Thus we see that $\tau(T_{\alpha}T_{\beta}^{-1})(a\wedge b \wedge c) = 1$, proving (i).
    
    For (ii) we simply note that $T_{\alpha}T_{\beta}^{-1}$ fixes each of $a',b',$ and $c'$ so that in the mapping torus each corresponding cylinder closes up. Thus the three corresponding subsurfaces have trivial triple intersection. 
\end{proof}

This allows us to build uncountably many linearly independent elements in the image of $\tau$ provided that $S$ has infinite genus. 
    
\begin{proposition} \label{johnsonunctbleimage}
    Let $S$ be a surface with infinite genus. Then the image of $\tau:\cI(S) \rightarrow \operatorname{Hom}(\wedge^{3}H;\Z)$ has uncountably many linearly-independent elements.
\end{proposition}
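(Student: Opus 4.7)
The plan is to combine the locality of Lemma~\ref{bpimage} with the almost-disjoint-family trick from Section~\ref{sec:uncountably}. First, since $S$ has infinite genus, I would pick an infinite collection of pairwise disjoint bounding pairs $\{(\alpha_n,\beta_n)\}_{n\in\N}$ whose bounded subsurfaces $B_n$ are themselves pairwise disjoint and eventually leave every compact set. Set $f_n = T_{\alpha_n} T_{\beta_n}^{-1}\in \cI(S)$. Using the change-of-coordinates principle together with Lemma~\ref{bpimage}(i), I can produce a triple of curves $(a_n,b_n,c_n)$ whose supports lie in a small neighborhood $U_n$ of $B_n$ (with $U_n$'s disjoint) and such that $\tau(f_n)(a_n\wedge b_n\wedge c_n)=1$. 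The key orthogonality is then immediate from Lemma~\ref{bpimage}(ii): if $m\neq n$ the triple $(a_n,b_n,c_n)$ is disjoint from $B_m$, so $\tau(f_m)(a_n\wedge b_n\wedge c_n)=0$.

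Next, fix an almost-disjoint family $\{\Lambda_a\}_{a\in\R}$ of infinite subsets of $\N$ as in Section~\ref{sec:uncountably}. Enumerating $\Lambda_a=\{n_i^a\}_{i\in\N}$, I would define
\[
   g_a \;\defeq\; \prod_{i=1}^{\infty} f_{n_i^a}^{\,i}.
\]
Because the supports of the $f_n$ eventually leave every compact set, the product defines an element of $\MCG(S)$, and since each factor acts trivially on $H_1(S;\Z)$ and $\cI(S)$ is closed in $\MCG(S)$, we have $g_a\in\cI(S)$.

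The crucial computation is to check that for each $k\in\Lambda_a$, writing $k=n_i^a$,
\[
   \tau(g_a)(a_k\wedge b_k\wedge c_k) \;=\; i,
\]
while for $k\notin\Lambda_a$ the value is $0$. I would prove this by localizing: choose a compact set $K$ containing the supports of $a_k,b_k,c_k$, and pick $N$ so large that every $f_{n_j^a}$ with $j>N$ has support disjoint from $K$. Then $g_a = g_a^{(N)}\cdot h$ where $g_a^{(N)}=\prod_{j\le N} f_{n_j^a}^{\,j}$ and $h$ is the identity on a neighborhood of $K$. The cylinder surfaces $S_{a_k},S_{b_k},S_{c_k}$ in the mapping torus can be chosen so that their relevant intersection data lies in this neighborhood, so the triple intersection computed in $M_{g_a}$ agrees with that in $M_{g_a^{(N)}}$. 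Since $\tau$ is a genuine homomorphism on the finite product, $\tau(g_a^{(N)})=\sum_{j\le N} j\cdot \tau(f_{n_j^a})$, and Lemma~\ref{bpimage}(ii) kills every term except the one (if any) with $n_j^a=k$.

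With this, linear independence follows by a standard almost-disjointness argument: if $\sum_{l=1}^m c_l\,\tau(g_{a_l})=0$ for distinct $a_l\in\R$ and $c_l\in\Z$, then for each $l$ pick $k_l\in\Lambda_{a_l}\setminus\bigcup_{l'\neq l}\Lambda_{a_{l'}}$, which exists by the finiteness of the pairwise intersections; evaluating on $(a_{k_l}\wedge b_{k_l}\wedge c_{k_l})$ isolates the $l$-th summand and forces $c_l=0$. The main obstacle I anticipate is the localization step justifying that $\tau(g_a)$ can be evaluated on a compactly-supported triple using only a finite truncation of the infinite product; once this locality is established cleanly (using that the uncloseable pieces of the cylinders $\mu\times[0,1]$ live entirely near the compact set $K$), the linear-independence argument and the use of almost-disjoint families are formal.
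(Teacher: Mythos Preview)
Your argument is correct and follows essentially the same route as the paper: build an uncountable family of infinite products of disjoint bounding-pair maps indexed by an almost-disjoint family $\{\Lambda_a\}_{a\in\R}$, and use the locality of Lemma~\ref{bpimage} to evaluate $\tau$ on carefully chosen triples so as to isolate one index at a time.

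Two minor remarks. First, the increasing powers $f_{n_i^a}^{\,i}$ are unnecessary here; the paper simply takes $f_a=\prod_{i\in\Lambda_a} f_i$, and the evaluation on the local triple then returns $1$ rather than the index~$i$. The growing exponents were needed in Section~\ref{sec:uncountably} only because Theorem~\ref{mainthm2} requires an unbounded sequence of powers, but the Johnson homomorphism is an honest homomorphism and no such hypothesis is needed. Second, you are right to flag the localization step (evaluating $\tau$ of an infinite product by a compactly supported truncation); the paper handles this implicitly by choosing the third curve $z_{i_a}$ to be disjoint from the other subsurfaces in play and then invoking Lemma~\ref{bpimage}(ii), which is exactly the justification you sketch.
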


\begin{proof}
    If $f$ is a bounding-pair map we let $S_{f}$ be the finite-type surface that the corresponding bounding pair cobound. Let $\{f_{i}\}_{i\in\N}$ be a sequence of bounding-pair maps such that $\{S_{f_{i}}\}_{i\in\N}$ is a pairwise disjoint sequence of subsurfaces of $S$. Now we apply the same trick as in Section \ref{sec:uncountably} to obtain for each $a\in\R$ an infinite subset $\Lambda_{a}$ of $\N$ such that $\Lambda_{a}\cap\Lambda_{b}$ is finite for all $a\neq b$. For each $a\in \R$ let
    \begin{align*}
        f_{a} = \prod_{i \in \Lambda_{a}} f_{i}.
    \end{align*}
    
    We claim that the collection $\{\tau(f_{a})\}_{a\in \R}$ is linearly independent. We first check that $\tau(f_{a}) \neq \tau(f_{b})$ for all $a \neq b$. Let $i_{a} \in \Lambda_{a} \setminus \Lambda_{b}$ and $i_{b} \in \Lambda_{b} \setminus \Lambda_{a}$. Consider two triples of curves $x_{i_{a}}\wedge y_{i_{a}} \wedge z_{i_{a}}$ and $x_{i_{b}}\wedge y_{i_{b}} \wedge z_{i_{b}}$ where $x_{i_{a}}$ and $y_{i_{a}}$ are two curves contained in $S_{i_{a}}$ intersecting once and $z_{i_{a}}$ is a curve intersecting each of the bounding-pair curves making up $f_{i_{a}}$ that is disjoint from $S_{i_{b}}$, likewise for the other triple of curves. 
    
    Then by Lemma \ref{bpimage} we have the following. 
    \begin{align*}
        \tau(f_{a})(x_{i_{a}}\wedge y_{i_{a}} \wedge z_{i_{a}}) &= 1, \\
        \tau(f_{b})(x_{i_{a}}\wedge y_{i_{a}} \wedge z_{i_{a}}) &= 0, \\
        \tau(f_{a})(x_{i_{b}}\wedge y_{i_{b}} \wedge z_{i_{b}}) &= 0, \\
        \tau(f_{b})(x_{i_{b}}\wedge y_{i_{b}} \wedge z_{i_{b}}) &= 1.
    \end{align*}
    
    Finally, given any finite linear combination of such maps we will always be able to find such a triple that evaluates to something non-zero since any finite collection of the $\Lambda_{a}$ has finite intersection. 
\end{proof}

Since $\tau$ must factor through $H_{1}(\cI(S);\Z)$ this proves that the abelianization of $\cI(S)$, unlike the abelianization of $\PMCGcc{S}$, also contains many indivisible copies of $\Z$ when $S$ has infinite genus. 

\begin{proof}[Proof of Theorem \ref{torelli}]
    The first statement is the content of Theorem \ref{torelli2}. Assume that $S$ has infinite genus. We first note that $\operatorname{Hom}(\wedge^{3}H;\Z) \cong \Z^{\N}$ does not contain any divisible elements. We must then have that all divisible elements in $H_{1}(\cI(S);\Z)$ are contained in the kernel of $\tau$. Therefore, given the splitting $H_{1}(\cI(S);\Z) = \oplus_{2^{\aleph_{0}}}\Q \oplus B$ as in Theorem \ref{torelli2} we have that $\tau(H_{1}(\cI(S);\Z)) = \tau(B)$. By Proposition \ref{johnsonunctbleimage} we see that the image of $\tau$ contains a copy of $\oplus_{2^{\aleph_{0}}}\Z$. Finally, this is a free abelian group and so lifts to a copy of $\oplus_{2^{\aleph_{0}}}\Z$ in $B$. 
\end{proof}
    

\section{Discontinuous homomorphisms}

In this section we give counterexamples to automatic continuity in the setting of the closure of the compactly-supported mapping class group using Theorem \ref{divsub}. This application was pointed out to the author by Ryan Dickmann and in conversations with Paul Plummer, Jes\'{u}s Hern\'{a}ndez Hern\'{a}ndez, Ryan Dickmann, and Mladen Bestvina. In this section we write $\mathfrak{c} = 2^{\aleph_{0}}$ for the cardinality of the continuum.

A topological group is said to be \textbf{Polish} if it is separable and completely metrizable. In \cite{APV2020} the authors show that for an infinite-type surface, $S$, $\MCG(S)$ is Polish and hence so are all closed subgroups including $\PMCG(S)$ and $\PMCGcc{S}$.

\begin{definition}
    We say that a Polish group $G$ has \textbf{automatic continuity} if every homomorphism from $G$ to a separable topological group is necessarily continuous. 
\end{definition}

\begin{theorem}
    Let $S$ be an infinite-type surface. There exists $2^{\mathfrak{c}}$ discontinuous homomorphisms from $\PMCGcc{S}$ to $\Q$.
\end{theorem}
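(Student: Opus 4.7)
The plan is to produce $2^{\mathfrak{c}}$ homomorphisms to $\Q$ from the abelianization and then argue on purely cardinality grounds that only $\mathfrak{c}$ of them can be continuous.

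First I would apply Theorem \ref{divsub} to split off a $\Q$-vector space summand of dimension $\mathfrak{c}$ from $H_{1}(\overline{\PMCGc(S)};\Z)$. Composing the abelianization map with the projection onto this summand gives a surjective homomorphism
\[
\Phi : \overline{\PMCGc(S)} \longrightarrow V,
\]
where $V \cong \bigoplus_{\mathfrak{c}} \Q$ is a $\Q$-vector space with a Hamel basis of size $\mathfrak{c}$. Every function from this basis to $\Q$ extends uniquely to a $\Q$-linear map $V \to \Q$, and any such linear map is in particular a group homomorphism. Thus the set $\operatorname{Hom}(V,\Q)$ has cardinality at least $|\Q|^{\mathfrak{c}} = \mathfrak{c}^{\mathfrak{c}} = 2^{\mathfrak{c}}$; precomposing with $\Phi$ yields $2^{\mathfrak{c}}$ distinct homomorphisms $\overline{\PMCGc(S)} \to \Q$ (distinct because $\Phi$ is surjective, so two different maps out of $V$ remain different after pulling back).

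Next I would count continuous homomorphisms. Since $\overline{\PMCGc(S)}$ is Polish it is separable; fix a countable dense subset $D$. Any continuous map from $\overline{\PMCGc(S)}$ to the Hausdorff space $\Q$ is determined by its restriction to $D$, so the set of continuous homomorphisms injects into $\Q^{D}$ and has cardinality at most $|\Q|^{\aleph_{0}} = \mathfrak{c}$. Subtracting, at least $2^{\mathfrak{c}} - \mathfrak{c} = 2^{\mathfrak{c}}$ of the homomorphisms constructed above are discontinuous.

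The main obstacle is essentially cosmetic: one must make sure that genuinely distinct elements of $\operatorname{Hom}(V,\Q)$ yield genuinely distinct homomorphisms on the group level, which follows immediately from the surjectivity of the abelianization map and of the projection onto the divisible summand. Everything else is the by-now standard ``few continuous maps, many homomorphisms'' argument enabled by Theorem \ref{divsub}.
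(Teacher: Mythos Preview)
Your proof is correct and follows essentially the same approach as the paper: use Theorem~\ref{divsub} to produce $2^{\mathfrak{c}}$ homomorphisms to $\Q$ via the $\bigoplus_{\mathfrak{c}}\Q$ summand of the abelianization, then invoke separability of the Polish group $\overline{\PMCGc(S)}$ to bound the number of continuous ones by $\mathfrak{c}$. Your write-up is in fact slightly more careful than the paper's in two places---you explicitly justify the count $|\operatorname{Hom}(V,\Q)| = 2^{\mathfrak{c}}$ via a Hamel basis, and you note that surjectivity of $\Phi$ is what guarantees distinct linear functionals pull back to distinct homomorphisms---but the underlying argument is identical.
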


\begin{proof}
     Since $H_{1}(\overline{\PMCGc(S)};\Z)$ has a direct summand isomorphic to $\displaystyle\oplus_{\mathfrak{c}}\Q$ we have $2^{\mathfrak{c}}$ nontrivial homomorphisms from $H_{1}(\overline{\PMCGc(S)};\Z)$ to $\Q$. 
    
    By pre-composing each of these with the quotient homomorphism $\PMCGcc{S} \rightarrow H_{1}(\overline{\PMCGc(S)};\Z)$ we have $2^{\mathfrak{c}}$ nontrivial homomorphisms from $\PMCGcc{S}$ to $\Q$. However, since $\PMCGcc{S}$ is separable only $\mathfrak{c}$ of these can be continuous. Note that if $S$ has at least three genus then in fact none of these maps are continuous since $\PMCGc(S)$ (a dense set) is contained in the kernel of each one by the theorems of Powell and Harer. 
\end{proof}

Note that when $S$ has at most one end accumulated by genus we have $\PMCGcc{S} = \PMCG(S)$. Thus this theorem gives discontinuous homomorphisms with domain the full pure mapping class group in this setting. When $S$ is the Loch Ness monster (one end accumulated by genus) we get a discontinuous homomorphism with domain the full mapping class group. This is in contrast to the sphere minus a Cantor set for which it is known that the full mapping class group has automatic continuity \cite{Mann2020} and is uniformly perfect \cite{CalegariBlog}. 


\section{Elements not in $H_{1}(\PMCGcc{S};\Z)$ and other possible nontrivial elements}

In this section we give examples of elements in $\PMCGcc{S}\setminus \PMCGc{S}$ that are trivial in the abelianization and pose some questions about other possible nontrivial elements. 

\begin{proposition} \label{trivialelem}
    Let $S$ be an infinite-type surface. Suppose $f \in \PMCGcc{S}$ can be written as $f = \prod_{i=1}^{\infty}f_{i}$ where each $f_{i} \in \PMCGc(S)$ with $\supp(f_{i}) \subset K_{i}$ where each $K_{i}$ is a finite-type subsurface with $K_{i} \cap K_{j} =\emptyset$ if $i \neq j$. Furthermore, suppose that each $f_{i}$ can be written as a product of commutators in $\PMCG(K_{i})$ and that their commutator lengths are uniformly bounded by $N>0$. Then $f$ can be written as a product of $N$ commutators. 
\end{proposition}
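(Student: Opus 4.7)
The plan is to decompose each $f_i$ as a product of $N$ commutators in $\PMCG(K_i)$, view the commutator factors as elements of $\overline{\PMCGc(S)}$ via the disjointness of the $K_i$'s, and then assemble these factors into $N$ infinite commutator products whose product recovers $f$.

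First I would use the uniform commutator length bound to write, for each $i$, an expression $f_i = \prod_{j=1}^{N}[a_{i,j},b_{i,j}]$ with $a_{i,j},b_{i,j}\in\PMCG(K_i)$, padding with trivial commutators if $f_i$ needs fewer than $N$. Since each $K_i$ is a finite-type subsurface of $S$, we may view each $a_{i,j}$ and $b_{i,j}$ as a mapping class of $S$ supported on $K_i$, hence as an element of $\PMCGc(S)$. Because the $K_i$ are pairwise disjoint and (since $f$ is well-defined as an element of $\overline{\PMCGc(S)}$) eventually leave every compact set, for each fixed $j$ the infinite products
\[
A_j \defeq \prod_{i=1}^{\infty} a_{i,j}, \qquad B_j \defeq \prod_{i=1}^{\infty} b_{i,j}
\]
converge in $\overline{\PMCGc(S)}$ exactly as $f=\prod_{i=1}^{\infty}f_i$ does, with the truncated products providing the required Cauchy sequence in the compact-open topology.

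Next I would exploit the key fact that mapping classes supported on disjoint subsurfaces commute. This implies first that each $[A_j,B_j]$ decomposes as the infinite product $\prod_{i=1}^{\infty}[a_{i,j},b_{i,j}]$, since when computing $A_j B_j A_j^{-1} B_j^{-1}$ the factors from distinct indices $i$ and $i'$ freely commute past one another and we may regroup them by $i$. A second application of the same observation lets us rewrite
\[
\prod_{j=1}^{N}[A_j,B_j] \;=\; \prod_{j=1}^{N}\prod_{i=1}^{\infty}[a_{i,j},b_{i,j}] \;=\; \prod_{i=1}^{\infty}\prod_{j=1}^{N}[a_{i,j},b_{i,j}] \;=\; \prod_{i=1}^{\infty} f_i \;=\; f,
\]
exhibiting $f$ as a product of $N$ commutators in $\overline{\PMCGc(S)}$.

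The main technical point to be careful about is justifying the rearrangement of the infinite product in the topology of $\overline{\PMCGc(S)}$: one must check that the partial products $\prod_{i=1}^{n}\prod_{j=1}^{N}[a_{i,j},b_{i,j}]$ and $\prod_{j=1}^{N}\prod_{i=1}^{n}[a_{i,j},b_{i,j}]$ agree for each $n$ (which they do verbatim by disjoint-support commutation), and that both sequences converge to the same limit as $n\to\infty$, which follows because the $K_i$ eventually leave every compact set, so tails of each product act as the identity on any prescribed compact subsurface. Once this is in hand, the identity above gives the desired factorization with exactly $N$ commutators.
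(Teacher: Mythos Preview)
Your proposal is correct and follows essentially the same argument as the paper: write each $f_i$ as a product of $N$ commutators in $\PMCG(K_i)$ (padding with identities), form the infinite products $A_j=\prod_i a_{i,j}$ and $B_j=\prod_i b_{i,j}$, and use disjoint-support commutation to conclude $f=\prod_{j=1}^{N}[A_j,B_j]$. Your write-up is in fact more careful than the paper's about the convergence and rearrangement of the infinite products.
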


\begin{proof}
    For each $i$ write $f_{i} = [g_{i_{1}},g_{i_{2}}]\cdots [g_{i_{2N-1}},g_{i_{2N}}]$ where $g_{i_{j}} \in \PMCG(K_{i})$. We allow for some of the $g_{i_{j}}$ to be the identity if $f_{i}$ has commutator length less than $N$. Thus we have
    \begin{align*}
        f = \prod_{i=1}^{\infty} \prod_{j=1}^{2N-1}[g_{i_{j}},g_{i_{j+1}}].
    \end{align*}
    Since we have $K_{i} \cap K_{j} = \emptyset$ we can rearrange this product to write
    \begin{align*}
        f = \prod_{j=1}^{2N-1}\left[\prod_{i=1}^{\infty}g_{i_{j}}, \prod_{i=1}^{\infty}g_{i_{j+1}}\right].
    \end{align*}
\end{proof}

An example of this is an infinite product of uniformly-bounded powers of commuting Dehn twists. We now ask whether the converse holds. 

\begin{question}
    Let $S$ be an infinite-type surface. Suppose $f \in \PMCGcc{S}$ can be written as $f = \prod_{i=1}^{\infty}f_{i}$ where each $f_{i} \in \PMCGcc{S}$ with $\supp(f_{i}) \cap \supp(f_{j}) = \emptyset$ for all $i\neq j$. Furthermore, suppose that each $f_{i}$ can be written as a product of commutators in $\PMCG(\supp(f_{i}))$ with unbounded commutator lengths. Then is $f$ nontrivial in $H_{1}(\PMCGcc{S};\Z)$?
\end{question}

Each component of $f$ having unbounded commutator length was the inspiration for Theorem \ref{mainthm2} and each element we construct satisfies the hypotheses of the question. However, our techniques relied heavily on the fact that each component is a power of the same mapping class on homeomorphic subsurfaces. So far we do not know how to get a large lower bound on a quasimorphism purely from the fact that the commutator lengths of the components grow. 

Our technique was also only able to detect torsion-free elements. This begs a second question.

\begin{question}
    Let $S$ be an infinite-type surface. Are there torsion elements in $H_{1}(\PMCGcc{S};\Z)$?
\end{question}


\appendix
\section{Appendix: The Loch Ness monster}

\begin{center}
    Ryan Dickmann and George Domat
\end{center}

In this appendix we prove that the mapping class group of the Loch Ness Monster surface is also not perfect. Note that for this surface and its once-punctured variant $\MCG(S)$, $\PMCG(S)$, and $\overline{\PMCGc(S)}$ are all the same. 

\begin{theorem} \label{loch}
    Let $L$ be the surface with one end, no boundary components, and infinite genus. Then $\MCG(L)$ is not perfect. In fact, $H_{1}(\MCG(L);\Z) = \oplus_{2^{\aleph_{0}}}\Q \oplus B$ where all divisible subgroups of $B$ are torsion.
\end{theorem}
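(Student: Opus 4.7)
The plan is to puncture $L$ to create a surface to which the main theorems of the paper apply, then transfer the homological conclusion back to $\MCG(L)$ via the Birman exact sequence.

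First I would pick a point $p\in L$ and set $L^{*}=L\setminus\{p\}$. The surface $L^{*}$ has exactly two ends, one accumulated by genus and one not (the puncture). Since these ends are topologically distinguishable, $\MCG(L^{*})=\PMCG(L^{*})$, and since only one of them is accumulated by genus, $\PMCG(L^{*})=\overline{\PMCGc(L^{*})}$. Applying Theorem~\ref{divsub} to the two-ended surface $L^{*}$ therefore gives
\[
H_{1}\bigl(\PMCG(L^{*});\Z\bigr) = \left(\bigoplus_{2^{\aleph_{0}}}\Q\right)\oplus B,
\]
with all divisible subgroups of $B$ torsion.

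Next I would invoke the Birman exact sequence, which continues to hold in the infinite-type setting,
\[
1\longrightarrow \pi_{1}(L,p)\longrightarrow \PMCG(L^{*})\longrightarrow \MCG(L)\longrightarrow 1,
\]
together with the five-term exact sequence in group homology to obtain
\[
H_{1}(L;\Z)_{\MCG(L)}\longrightarrow H_{1}(\PMCG(L^{*});\Z)\longrightarrow H_{1}(\MCG(L);\Z)\longrightarrow 0.
\]
If the leftmost map is zero, then $H_{1}(\MCG(L);\Z)\cong H_{1}(\PMCG(L^{*});\Z)$ and the decomposition above transfers, completing the proof. To verify the vanishing it suffices to show that every point-push lies in the commutator subgroup of $\PMCG(L^{*})$.

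The key step is therefore to show that each element of a generating set of $\pi_{1}(L,p)$ has its point-push image in $[\PMCG(L^{*}),\PMCG(L^{*})]$. Since $\pi_{1}(L,p)$ is generated by simple loops, let $\gamma$ be such a simple loop; then the point-push $P(\gamma)$ is a bounding-pair map $T_{\gamma_{1}}T_{\gamma_{2}}^{-1}$, where $\gamma_{1}$ and $\gamma_{2}$ cobound a punctured annular neighborhood of $\gamma$ in $L^{*}$. Because $L$ has infinite genus, I can enclose this annular neighborhood in a compact finite-type subsurface $K\subset L^{*}$ of genus at least $3$ in which neither $\gamma_{1}$ nor $\gamma_{2}$ is peripheral. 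By the classical results of Powell and Harer, Dehn twists about non-peripheral simple closed curves in $\PMCG(K)$ lie in $[\PMCG(K),\PMCG(K)]$, so both $T_{\gamma_{1}}$ and $T_{\gamma_{2}}$ do, and hence $P(\gamma)\in[\PMCG(K),\PMCG(K)]\subset[\PMCG(L^{*}),\PMCG(L^{*})]$. The main obstacle I anticipate is justifying the Birman exact sequence carefully in the infinite-type setting; given that, the Powell--Harer step on the compact subsurface $K$ is essentially standard.
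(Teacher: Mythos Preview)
Your argument is correct, and in fact proves something slightly stronger than the paper does: you obtain an isomorphism $H_{1}(\MCG(L);\Z)\cong H_{1}(\PMCG(L^{*});\Z)$, whereas the paper only shows that the surjection $H_{1}(\PMCG(L^{*});\Z)\to H_{1}(\MCG(L);\Z)$ has small enough kernel for a copy of $\bigoplus_{2^{\aleph_{0}}}\Q$ to survive.

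The paper's route is more elementary but less informative. Rather than analyzing the point-push map, the paper simply observes that the image of $\pi_{1}(L,p)$ in $H_{1}(\PMCG(L^{*});\Z)$ factors through $H_{1}(L;\Z)\cong\bigoplus_{\aleph_{0}}\Z$, which is countable. Hence the kernel of $H_{1}(\PMCG(L^{*});\Z)\to H_{1}(\MCG(L);\Z)$ is countable, so it meets the copy of $\bigoplus_{2^{\aleph_{0}}}\Q$ in a countable subgroup; the quotient of $\bigoplus_{2^{\aleph_{0}}}\Q$ by this is still divisible with uncountably many torsion-free elements, and the structure theorem for divisible groups recovers $\bigoplus_{2^{\aleph_{0}}}\Q$ in the image. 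Your approach instead kills the kernel outright by invoking Powell--Harer on a genus $\geq 3$ compact subsurface, which is a cleaner mechanism and yields the isomorphism. One minor phrasing issue: the punctured annulus is not compact in $L^{*}$, so your $K$ cannot literally ``enclose'' it; what you need (and what suffices) is that $\gamma_{1}$ and $\gamma_{2}$ lie in the interior of $K$, with a small boundary component of $K$ encircling the puncture.
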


To prove this we make use of the Birman Exact Sequence for infinite-type surfaces. We could not find a discussion of the infinite-type case in the literature so we present one here.

The proof is identical to the standard proof in \cite{FM11}. One only needs to check that $\pi_1(\Homeo^{+}(S))$ is trivial in the infinite-type case. The result then follows from the long exact sequence of homotopy groups given by the fiber bundle   
\begin{align*}
         \Homeo^{+}(S,x) \rightarrow \Homeo^{+}(S) \rightarrow S .
\end{align*} 

Here $\Homeo^{+}(S)$ is equipped with the compact-open topology. One can verify from the standard proof that this is indeed a fiber bundle in the infinite-type case as well. It was shown in \cite{YT2000} that the connected component of the identity in $\Homeo^{+}(S)$ is homotopy equivalent to a point for general non-compact 2-manifolds minus some degenerate finite-type cases. 

\begin{theorem}[Birman Exact Sequence]
    Let $S$ be a surface of negative Euler characteristic or infinite type. Let $(S,x)$ be the surface obtained from $S$ by adding a marked point $x$ in the interior of $S$. Then there is an exact sequence:
    \begin{align*}
        1 \rightarrow \pi_{1}(S,x) \rightarrow \MCG(S,x) \rightarrow \MCG(S) \rightarrow 1
    \end{align*}
\end{theorem}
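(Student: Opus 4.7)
The plan is to run the standard proof of the Birman exact sequence as in \cite{FM11}, verifying the two points at which the infinite-type setting could in principle break down: local triviality of the evaluation fibration and vanishing of $\pi_1(\Homeo^{+}(S))$. First I would consider the evaluation map
\[
\mathrm{ev}_x \colon \Homeo^{+}(S) \longrightarrow S, \qquad \phi \longmapsto \phi(x),
\]
which is continuous when $\Homeo^{+}(S)$ is equipped with the compact-open topology and whose fiber over $x$ is exactly $\Homeo^{+}(S,x)$. To see that $\mathrm{ev}_x$ is a fiber bundle I would use the standard point-pushing construction: for $y$ in a small open disk $D$ around $x$, produce a homeomorphism $h_y$ of $S$, supported in $D$ and depending continuously on $y$, with $h_y(x) = y$ (e.g.\ by cutting off a radial flow inside $D$). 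Then $(y,\phi) \mapsto h_y \circ \phi$ defines a local trivialization $D \times \Homeo^{+}(S,x) \xrightarrow{\cong} \mathrm{ev}_x^{-1}(D)$. Since $h_y$ is built entirely inside the chart $D$, the construction goes through verbatim in the infinite-type case.

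Next I would apply the long exact sequence of homotopy groups to this fibration. Because $S$ is either of negative Euler characteristic or of infinite type, $S$ is neither $S^2$ nor $\mathbb{R}P^2$ and hence is aspherical, so $\pi_2(S) = 0$. Using the definitional identifications $\pi_0(\Homeo^{+}(S,x)) = \MCG(S,x)$ and $\pi_0(\Homeo^{+}(S)) = \MCG(S)$ together with connectedness of $S$, the relevant segment of the long exact sequence becomes
\[
\pi_1(\Homeo^{+}(S)) \longrightarrow \pi_1(S,x) \longrightarrow \MCG(S,x) \longrightarrow \MCG(S) \longrightarrow 1.
\]

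It therefore remains to show $\pi_1(\Homeo^{+}(S)) = 1$, for then $\pi_1(S,x) \to \MCG(S,x)$ is forced to be injective and the short exact sequence stated in the theorem drops out. Since $\pi_1$ only sees the identity component, this reduces to showing that $\Homeo^{+}_0(S)$, the path component of the identity in $\Homeo^{+}(S)$, has trivial fundamental group. For this I would invoke the theorem of Yagasaki \cite{YT2000}, which states that $\Homeo^{+}_0(S)$ is in fact \emph{contractible} for every non-compact $2$-manifold, with the exception of a short list of degenerate finite-type cases that do not appear here. This contractibility result is the only genuinely new technical input beyond the finite-type argument, and it is the main obstacle in the sense that the entire extension to infinite type rests on it as a black box; everything else is a formal consequence of the long exact sequence.
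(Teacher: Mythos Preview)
Your proposal is correct and follows essentially the same approach as the paper: the evaluation fibration $\Homeo^{+}(S,x) \to \Homeo^{+}(S) \to S$, the long exact sequence of homotopy groups, and the appeal to \cite{YT2000} for the contractibility of $\Homeo^{+}_0(S)$. The paper records this in one paragraph and simply asserts that the fiber-bundle verification goes through as in \cite{FM11}; you have supplied the local-trivialization and asphericity details, but there is no difference in strategy.
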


\begin{proof} [Proof of Theorem \ref{loch}]
We first note that by applying the same abelian group theory argument as in Section \ref{proofofC} it suffices to show that $H_{1}(\MCG(L);\Z)$ contains a copy of $\oplus_{2^{\aleph_{0}}}\Q$. The general proof fails in the case of the Loch Ness Monster because we do not have end-separating curves. For the Loch Ness Monster with a puncture we do now have a separating principle exhaustion and the methods in the paper show that $H_{1}(\MCG(L,x);\Z)$ contains a copy of $\oplus_{2^{\aleph_{0}}}\Q$. 

The fundamental group of any infinite-type surface is a free group with countably many generators \cite{Stillwell1993}. Therefore we have the following exact sequence: 
    \begin{align*}
        1 \rightarrow F_{\infty} \rightarrow \MCG(L,x) \rightarrow \MCG(L) \rightarrow 1
    \end{align*}
    
Abelianization is right exact so we get the following exact sequence of abelianizations:
    \begin{align*}
        Z_{\infty} \rightarrow H_{1}(\MCG(L, x);\Z) \rightarrow H_{1}(\MCG(L);\Z) \rightarrow 1
    \end{align*}
Here $Z_{\infty}$ is the free abelian group with countably many generators. It follows that $H_{1}(\MCG(L);\Z)$ is the quotient of an uncountable group by a countable subgroup and is therefore uncountable itself. 

In fact we can do better and find a copy of $\oplus_{2^{\aleph_{0}}}\Q$ inside $H_{1}(\MCG(L);\Z)$. Since $Z_{\infty}$ is countable we must have that 
\begin{align*}
    \oplus_{2^{\aleph_{0}}}\Q \cap \ker (H_{1}(\MCG(L, x);\Z) \rightarrow H_{1}(\MCG(L);\Z))
\end{align*}
is countable where $\oplus_{2^{\aleph_{0}}} \Q$ refers to the copy found in Section 8. Thus the image of $\oplus_{2^{\aleph_{0}}}\Q$ is a divisible group with uncountably many non-torsion elements. Then by the Structure Theorem of Divisible Groups this image must again contain a copy of $\oplus_{2^{\aleph_{0}}}\Q$. 
\end{proof}

We can also apply this method of proof to the Torelli group for the Loch Ness Monster. 

\begin{theorem}
    Let $L$ be the surface with one end and infinite genus. Then $H_{1}(\cI(L);\Z) = \oplus_{2^{\aleph_{0}}}\Q \oplus B$ where all divisible subgroups of $B$ are torsion. 
\end{theorem}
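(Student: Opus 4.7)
The plan is to mimic the proof of Theorem \ref{loch} with the Torelli group in place of $\MCG$, using a Torelli-refined version of the Birman exact sequence. The key idea, as in that proof, is to add a puncture: the surface $(L,x)$ has infinite genus and two ends, so it falls under the hypotheses of the already-established Theorem \ref{torelli2}. Once a direct summand $\oplus_{2^{\aleph_{0}}}\Q$ is located in $H_{1}(\cI(L,x);\Z)$, the goal is to push it down to $H_{1}(\cI(L);\Z)$ via the forgetful map and then to recover a copy of $\oplus_{2^{\aleph_{0}}}\Q$ inside the abelianization of $\cI(L)$ by the same divisible-group manipulations used before.

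The first step is to upgrade the Birman exact sequence to the Torelli setting: I want the short exact sequence
\begin{align*}
1 \to \pi_{1}(L,x) \to \cI(L,x) \to \cI(L) \to 1.
\end{align*}
The ordinary Birman sequence is available from Theorem \ref{loch}. The point-pushing subgroup lands inside $\cI(L,x)$ because the point push of a simple loop is a bounding pair map, and a general point push is a product of such. Surjectivity of the restricted forgetful map $\cI(L,x)\to\cI(L)$ follows because any lift in $\MCG(L,x)$ of a Torelli element of $\MCG(L)$ still acts trivially on $H_{1}(L;\Z)$, hence lies in $\cI(L,x)$. Since $(L,x)$ has infinite genus and two ends, Theorem \ref{torelli2} then furnishes a summand $\oplus_{2^{\aleph_{0}}}\Q$ in $H_{1}(\cI(L,x);\Z)$.

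Next, right-exactness of abelianization applied to the sequence above gives
\begin{align*}
H_{1}(L;\Z) \to H_{1}(\cI(L,x);\Z) \to H_{1}(\cI(L);\Z) \to 0,
\end{align*}
with $H_{1}(L;\Z)\cong\Z^{\aleph_{0}}$ countable since $\pi_{1}(L,x)$ is a free group of countable rank. Letting $D$ denote the image of the $\oplus_{2^{\aleph_{0}}}\Q$ summand, $D$ is divisible (quotients of divisibles are divisible), has cardinality $2^{\aleph_{0}}$, and its torsion subgroup is countable because the torsion of $\oplus_{2^{\aleph_{0}}}\Q$ modulo any countable subgroup $N$ equals $\Q N/N$ with $\Q N$ countable. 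By the Structure Theorem of Divisible Groups (Theorem \ref{divstructure}), $D$ must contain a copy of $\oplus_{2^{\aleph_{0}}}\Q$. Finally, applying Theorem \ref{abeliandecomp} to split $H_{1}(\cI(L);\Z)=D'\oplus C$ with $D'$ divisible and $C$ containing no nontrivial divisible subgroup, and applying Theorem \ref{divstructure} again to $D'$ together with the cardinality bound $|\cI(L)|=2^{\aleph_{0}}$, yields $D'=\oplus_{2^{\aleph_{0}}}\Q\oplus T$ with $T$ torsion; setting $B=T\oplus C$ finishes the decomposition. The main obstacle is establishing the Torelli Birman sequence rigorously in the infinite-type setting; once this is in hand the remainder of the argument runs exactly as in the proof of Theorem \ref{loch}.
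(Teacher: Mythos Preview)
Your Torelli Birman sequence is incorrect, and this is where the argument breaks. Because $L$ is noncompact, the class $[\partial]$ of a small loop around the marked point is \emph{nonzero} in $H_1(L\setminus\{x\};\Z)$. For a simple based loop $\gamma$ the two curves $\gamma_\pm$ bounding a regular neighborhood of $\gamma$ therefore satisfy $[\gamma_+]-[\gamma_-]=\pm[\partial]\neq 0$, so $T_{\gamma_+}T_{\gamma_-}^{-1}$ is \emph{not} a bounding-pair map in $(L,x)$; it acts on $H_1(L\setminus\{x\})$ by $v\mapsto v\pm\langle v,\gamma\rangle[\partial]$, which is nontrivial whenever $[\gamma]\neq 0$ in $H_1(L)$. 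Hence the point-pushing subgroup does not land in $\cI(L,x)$. Your surjectivity argument has the same defect: a lift $\tilde f\in\MCG(L,x)$ of $f\in\cI(L)$ indeed acts trivially on $H_1(L)$, but membership in $\cI(L,x)$ requires triviality on the strictly larger group $H_1(L\setminus\{x\})\cong H_1(L)\oplus\Z[\partial]$, and you have not controlled the $[\partial]$-component. Without surjectivity of $p\colon\cI(L,x)\to\cI(L)$, right-exactness only computes $H_1(\operatorname{im}p)$, and your countable-kernel counting never reaches $H_1(\cI(L))$.

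The paper avoids this by working with the specific elements rather than a Torelli-level exact sequence. It chooses the divisible families $f_a\in\cI(L,x)$ from Section~\ref{sec:torelli} so that every curve involved remains separating in $L$ after the puncture is forgotten; this guarantees that the $f_a$, their roots, and the commutators exhibiting divisibility all map into $\cI(L)$, so each $\bar{f_a'}$ is divisible in $H_1(\cI(L))$ directly. For independence the paper invokes the counting argument of Theorem~\ref{loch}, which one can read through the natural map $H_1(\cI(L))\to H_1(\MCG(L))$: the images of the $\bar{f_a'}$ there agree with the images of the $\bar f_a$ under $H_1(\MCG(L,x))\to H_1(\MCG(L))$, a map with countable kernel by the ordinary Birman sequence, so uncountably many of them are independent in $H_1(\MCG(L))$ and hence already in $H_1(\cI(L))$.
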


\begin{proof}
    Once again, it suffices to show that $H_{1}(\cI(L);\Z)$ contains a copy of $\oplus_{2^{\aleph_{0}}}\Q$. As in Section \ref{sec:torelli} we can find elements in $\cI(L,x)$ that give rise to a copy of $\oplus_{2^{\aleph_{0}}}\Q$ in the abelianization. We can pick these elements so that they remain in $\cI(L)$ after applying the forgetful map. Indeed, we can ensure that the curves we twist about remain separating after forgetting the marked point. Now the same counting argument as in the previous theorem gives a copy of $\oplus_{2^{\aleph_{0}}}\Q$ in $H_{1}(\cI(L);\Z)$. 
\end{proof}

\section*{Acknowledgements}
The author thanks Mladen Bestvina for numerous helpful conversations and suggestions and for patiently reading through many drafts of this paper. Thanks also to Ryan Dickmann for pointing out the case of the Loch Ness Monster and the application to automatic continuity and to  Jes\'{u}s Hern\'{a}ndez Hern\'{a}ndez, Paul Plummer, and Priyam Patel for many helpful discussions about big mapping class groups. Finally, many thanks to the referee for pointing out the application in Corollary~\ref{endomorphisms} and numerous other comments. Domat was partially supported by NSF DMS-1607236, NSF DMS-1840190, and NSF DMS-1246989.

\bibliography{bib}
\bibliographystyle{halpha}

\end{document}